\providecommand{\keywords}[1]
{
  \small	
  \textbf{\textit{Keywords---}} #1
}
\newtheorem{lemma}{Lemma}[section]
\newtheorem{theorem}[lemma]{Theorem}
\newtheorem{proposition}[lemma]{Proposition}
\newtheorem{corollary}[lemma]{Corollary}
\newtheorem{result}{Theorem}
\newtheorem{corollaryresult}[result]{Corollary}
\theoremstyle{definition}
\newcommand{\abs}[1]{\ensuremath{\left| #1 \right|}}
\newcommand{\op}{\operatorname}
\newcommand{\ce}[2]{\operatorname{C}_{#1}(#2)}
\newcommand{\no}[2]{\operatorname{N}_{#1}(#2)}
\newcommand{\ze}[1]{\operatorname{Z}(#1)}
\newcommand{\syl}[2]{\op{Syl}_{#1}\left(#2\right)}
\newcommand{\groupgen}[1]{\langle #1 \rangle}
\newcommand{\irr}{\operatorname{Irr}}
\newcommand{\lin}{\operatorname{Lin}}
\newcommand{\gal}{\operatorname{Gal}}
\newcommand{\cycl}[1]{\mathbb{Q}_{#1}}
\newcommand{\smid}{\! \mid \!}
\def\PSL{\mathrm{PSL}}
\def\PSU{\mathrm{PSU}}
\def\PGL{\mathrm{PGL}}
\def\SL{\mathrm{SL}}
\def\SU{\mathrm{SU}}
\def\GL{\mathrm{GL}}
\def\Sz{\mathrm{Sz}}
\def\Aut{\mathrm{Aut}}
\def\G{\mathcal{G}}
\def\H{\mathcal{H}}
\renewcommand{\phi}{\varphi}
\renewcommand{\epsilon}{\varepsilon}
\title{Sylow normalizers and irreducible characters with small cyclotomic field of values}
\date{\today}
\author{Nicola Grittini\footnote{nicola.grittini@unicatt.it} }
\author{Marco Antonio Pellegrini\footnote{marcoantonio.pellegrini@unicatt.it} }
\affil{Università Cattolica del Sacro Cuore\\
Via della Garzetta, 48 -- 25133 -- Brescia (Italy)}
\begin{document}

\maketitle

\begin{abstract}
\noindent
In this paper, we prove the existence of a relation between the prime divisors of the order of a Sylow normalizer and the degree of characters having values in some small cyclotomic fields. This relation is stronger when the group is solvable.

\keywords{character degrees, field of values, cyclotomic fields, prime divisors, Sylow normalizers}
\end{abstract}

\section{Introduction}

In character theory, there had been surprisingly few attempts to find a way to determine from the character table which prime numbers divide the order of a Sylow normalizer. In \cite{Isaacs-Navarro:Character_table_Sylow_normalizer}, it is proved that it is possible to do it for solvable groups. In particular, it is proved that it is possible to tell from the character table of a group $G$ whether a prime number $r$ divides the order of the normalizer of a Sylow $p$-subgroup, for some other prime number $p$, if we assume that the group $G$ is $r$-solvable. If $G$ is $p$-solvable, on the other hand, \cite[Theorem~C]{Guralnick-Navarro-Tiep:Finite_groups_odd_normalizer} provides a sufficient condition, involving character values, for the normalizer of a Sylow $p$-subgroup to be of even order.

In this paper, we will see that the character table can provide some informations on which prime numbers divide the order of a Sylow normalizer, also without assumptions on group solvability.

This paper, however, is motivated also by another, apparently unrelated problem. In \cite{Navarro-Tiep:Rational_irreducible_characters}, answering to a question first posed by R. Gow, Navarro and Tiep proved that every finite group of even order has a rational-valued character of odd degree. In a subsequent paper \cite{Navarro-Tiep:Character_cyclotomic_field} (which appeared first in literature), they expanded their previous result by proving that, for any prime number $p$ and any finite group $G$ of order divisible by $p$, there exists a character in $\irr(G)$ of degree not divisible by $p$ and having values in a $p$-cyclotomic extension of $\mathbb{Q}$, i.e., in the field $\mathbb{Q}(\zeta)$ where $\zeta$ is some primitive $p$th-root of unity.

A more recent contribution to the topic is \cite{Giannelli-Hung-ShaefferFry-Vallejo:Characters_of_pi_degree_and_small}, where it is proved that, with very few exceptions, finite groups always have a non-principal irreducible character whose degree is not divisible by \emph{two} given primes and whose field of values is contained in some small cyclotomic extension of $\mathbb{Q}$.

In this paper, we will study another possible generalization of the results in \cite{Navarro-Tiep:Character_cyclotomic_field}. Given \emph{two} prime numbers $p$ and $r$, we will see under which conditions a group $G$ has an irreducible character of degree not divisible by $p$ having values in some $r$-cyclotomic extension of $\mathbb{Q}$. We will see that this always happens when the normalizer of a Sylow $p$-subgroup of the group $G$ satisfies some conditions which are automatically satisfied if $p=2$ or $p=r$, as we would expect from the results in \cite{Navarro-Tiep:Character_cyclotomic_field}.

First, we need to establish some notation. Given a positive integer $n$, we write $\cycl{n}$ to indicate the $n$-cyclotomic extension of the field $\mathbb{Q}$, i.e., the field $\mathbb{Q}(\zeta)$ where $\zeta$ is some primitive $n$th-root of unity. This is a standard notation in character theory but, unfortunately, if $n=p$ the same notation is just as commonly used to denote the field of $p$-adic numbers. In this paper, $\cycl{p}$ will always denote the $p$-cyclotomic extension of $\mathbb{Q}$.

We are interested in the irreducible characters of a group $G$ having values in $\cycl{r}$, for some prime number $r$, and with a degree not divisible by a prime number $p$. Sometimes, we may write this set as
$$ \irr_{p',\cycl{r}}(G) = \Big\{ \chi \in \irr(G) \; \Big| \; p \nmid \chi(1) \mbox{ and } \mathbb{Q}(\chi) \subseteq \cycl{r} \Big\}. $$

The aforementioned result of Navarro and Tiep, \cite[Theorem~A]{Navarro-Tiep:Rational_irreducible_characters}, asserts that the set $\irr_{p',\cycl{p}}(G)$ contains a non-principal character whenever the order of $G$ is ether even or divisible by $p$. It is possible, however, that $\irr_{p',\cycl{r}}(G) = \{ 1_G \}$ if $p \neq r$, even when the prime numbers $2$, $p$ and $r$ all divide $\abs{G}$. An example of this fact is the group $G = \op{P\Sigma L}(2,27)$, i.e., the group formed by taking the semidirect product of $\op{PSL}(2,27)$ acted on by the cyclic group generated by a field automorphism of order $3$ (notice that this counterexample also appears in \cite{Isaacs-Malle-Navarro:Real_characters}). In this situation, all the irreducible characters in $\irr_{3'}(G)$ are non-rational with values in $\cycl{3}$; therefore, $\irr_{3',\cycl{r}}(G) = \{ 1_G \}$ for any $r \neq 3$.

Thus, it does make sense to find some conditions that allow the set $\irr_{p',\cycl{r}}(G)$ to contain a non-principal character. This is the aim of our main theorem.

\begin{result}
\label{result:general}
Let $G$ be a finite group and let $p$ and $r$ be two (possibly equal) prime numbers. Let $P \in \syl{p}{G}$; if either $2$ or $r$ divide $\abs{\no{G}{P}}$, there exists a non-principal character $\chi \in \irr_{p'}(G)$ having values in $\cycl{r}$.
\end{result}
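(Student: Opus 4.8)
The plan is to exploit the theorem of Navarro and Tiep recalled in the introduction --- in the uniform shape ``for any prime $s$ and any finite group $H$, if $2$ or $s$ divides $\abs{H}$, then $\irr_{s',\cycl{s}}(H)$ contains a non-principal character'' --- together with an induction on $\abs{G}$ reducing to the case of a normal Sylow $p$-subgroup.

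I would first dispose of three configurations. If $p \nmid \abs{G}$, then $P = 1$, $\no{G}{P} = G$ and $\irr_{p'}(G) = \irr(G)$; applying the Navarro--Tiep result to $G$ with the prime $r$ (valid since $2$ or $r$ divides $\abs{G}$ by hypothesis) produces a non-principal $\chi \in \irr(G)$ with $\mathbb{Q}(\chi) \subseteq \cycl{r}$, which lies in $\irr_{p'}(G)$. If $p \mid \abs{G}$ and $p = r$, the same result with the prime $p$ gives a non-principal $\chi \in \irr_{p'}(G)$ with $\mathbb{Q}(\chi) \subseteq \cycl{p} = \cycl{r}$. If $p \mid \abs{G}$ and $p = 2$, the result with the prime $2$ gives a non-principal rational $\chi \in \irr_{2'}(G) = \irr_{p'}(G)$, and $\mathbb{Q} \subseteq \cycl{r}$. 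Hence from now on $p$ is odd, $p \neq r$ and $p \mid \abs{G}$.

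Suppose next that $P \trianglelefteq G$; then $\no{G}{P} = G$ and, by Schur--Zassenhaus, $G = P \rtimes H$ with $H$ a $p$-complement. Since $\abs{P}$ is a power of the odd prime $p \neq r$, the hypothesis ``$2$ or $r$ divides $\abs{G}$'' is equivalent to ``$2$ or $r$ divides $\abs{H}$''; so, by the Navarro--Tiep result applied to $H$ with the prime $r$, there is a non-principal $\beta \in \irr(H)$ with $\mathbb{Q}(\beta) \subseteq \cycl{r}$. Inflating $\beta$ through $G \to G/P \cong H$ yields a non-principal $\chi \in \irr(G)$ with $\mathbb{Q}(\chi) = \mathbb{Q}(\beta) \subseteq \cycl{r}$; moreover $\chi(1) = \beta(1)$ divides $\abs{H}$, hence is prime to $p$, so $\chi \in \irr_{p'}(G)$, as wanted.

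For the general case I would induct on $\abs{G}$. If $P \trianglelefteq G$ we are done by the previous paragraph; otherwise $N := \no{G}{P}$ is a proper subgroup with $P \in \syl{p}{N}$ and $\no{N}{P} = N$, so the hypothesis of the theorem holds with $N$ in place of $G$, and induction supplies a non-principal $\psi \in \irr_{p'}(N)$ with $\mathbb{Q}(\psi) \subseteq \cycl{r}$. The remaining step --- transferring such a character up to $G$, i.e. producing a non-principal $\chi \in \irr_{p'}(G)$ with $\mathbb{Q}(\chi) \subseteq \cycl{r}$ --- is the crux, and I expect it to be the main obstacle. The McKay correspondence already gives $\abs{\irr_{p'}(G)} = \abs{\irr_{p'}(N)}$, so $\irr_{p'}(G)$ certainly has non-principal characters and the whole difficulty lies in controlling their fields of values; in particular the naive device of picking an irreducible constituent of $\psi^{G}$ of degree prime to $p$ fails, since the constituents of $\psi^{G}$ can form a single non-trivial orbit under $\gal(\cycl{\abs{G}}/\cycl{r})$, in which case none has values in $\cycl{r}$. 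Carrying out the transfer seems to require genuine local--global input with Galois control: either an equivariant refinement of the McKay bijection in the needed direction, or --- for arbitrary $G$ --- a reduction to almost simple groups in the spirit of the Navarro--Tiep papers, the almost simple analysis being the heart of the matter; for solvable $G$ the Glauberman correspondence and its compatibility with field automorphisms should suffice, and, as announced in the abstract, yield a stronger conclusion.
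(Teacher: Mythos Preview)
Your proposal correctly disposes of the cases $p=2$ and $p=r$ via the Navarro--Tiep theorem, and your treatment of the case $P \trianglelefteq G$ is fine. But the heart of your argument --- the ``transfer'' of a character from $\no{G}{P}$ to $G$ in the inductive step --- is explicitly left open, and you acknowledge as much. This is not a minor detail: once $p$ is odd and $p\neq r$, producing the desired character in $G$ \emph{is} the entire content of the theorem, and your reduction to $N=\no{G}{P}$ does not bring you closer to it. As you yourself note, ordinary induction from $N$ does not preserve fields of values, and no McKay-type bijection with the required Galois equivariance is available off the shelf for arbitrary $G$. So what you have written is an outline with the main step missing, not a proof.

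The paper follows a completely different route. Instead of starting at $\no{G}{P}$ and pushing up, it chooses $N\lhd G$ maximal such that some chief factor $N/K$ either has $\abs{\no{N/K}{P}}$ divisible by $2$ or $r$, or is a direct product of non-abelian simple groups other than $\PSL(2,3^a)$ (the latter exclusion only relevant when $p=3$); after passing to $G/K$ one may take $N$ minimal normal, and then $(\abs{\no{G/N}{P}},2r)=1$. A non-principal $P$-invariant $\phi\in\irr_{p'}(N)$ with values in $\cycl{r}$ is produced directly: by Glauberman when $N$ is abelian, and by an analysis of simple groups (Section~5, using \cite[Theorem~C]{Navarro-Tiep:Character_cyclotomic_field} for most cases and an explicit treatment of $\PSL(2,3^a)$) otherwise. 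The climb from $N$ to $G$ is then carried out chief factor by chief factor (Theorem~\ref{theorem:existing_over}): for an abelian factor of order a power of $s$, one uses the uniqueness of the $s$-rational extension when $s\notin\{2,r\}$, or of the $P$-invariant extension when $s\in\{2,r\}$ (since then $\ce{M/N}{P}=1$), and uniqueness forces the extension to be fixed by the relevant Galois group; the only possible non-abelian factors are products of $\PSL(2,3^a)$, handled via character-triple isomorphisms with the Schur cover (Sections~3--4) together with explicit knowledge of $\irr(\SL(2,3^a))$. Thus the control over fields of values comes from uniqueness statements at each step of a bottom-up construction, not from any global correspondence with $\no{G}{P}$.
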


We may notice that, if we take $r=p$, then Theorem~\ref{result:general} is exactly \cite[Theorem~A]{Navarro-Tiep:Character_cyclotomic_field} (it is actually one direction of \cite[Theorem~B]{Navarro-Tiep:Character_cyclotomic_field}). Moreover, if $p=2$, then the theorem follows directly from \cite[Theorem~B]{Navarro-Tiep:Rational_irreducible_characters}. In fact, we may say that Theorem~\ref{result:general} extends \cite[Theorem~A]{Navarro-Tiep:Character_cyclotomic_field} for an odd prime number $p \neq r$.

On the other hand, if we take $r=2$, we can have some information about the existence of rational-valued irreducible characters of $p'$-degree. This information can integrate the classification we have from \cite[Theorem~C]{Navarro-Tiep:Character_cyclotomic_field}.

\begin{corollaryresult}
Let $G$ be a finite group and let $P \in \syl{p}{G}$ for some prime number $p$. If $\abs{\no{G}{P}}$ is even, then $G$ has a non-principal rational-valued irreducible character of degree not divisible by $p$.
\end{corollaryresult}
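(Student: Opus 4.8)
The plan is to obtain the statement as the special case $r=2$ of Theorem~\ref{result:general}. The only point that needs a remark is the identification of the relevant field of values: by definition $\cycl{2}=\mathbb{Q}(\zeta)$ where $\zeta$ is a primitive $2$nd root of unity, i.e. $\zeta=-1$, so that $\cycl{2}=\mathbb{Q}$. Consequently, an irreducible character of $G$ has values in $\cycl{2}$ precisely when it is rational-valued, and the set $\irr_{p',\cycl{2}}(G)$ is exactly the set of rational-valued irreducible characters of $G$ of degree not divisible by $p$.

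With $r=2$, the hypothesis of Theorem~\ref{result:general} — that either $2$ or $r$ divides $\abs{\no{G}{P}}$ — reduces to the single requirement that $\abs{\no{G}{P}}$ be even, which is precisely what we are assuming. Applying that theorem therefore produces a non-principal $\chi\in\irr_{p'}(G)$ with $\mathbb{Q}(\chi)\subseteq\cycl{2}=\mathbb{Q}$, that is, a non-principal rational-valued irreducible character of $G$ of degree not divisible by $p$, which is the assertion.

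I do not expect any genuine obstacle in this deduction: the corollary is a literal specialization of Theorem~\ref{result:general}, and the whole of the argument is the elementary observation $\cycl{2}=\mathbb{Q}$. All of the substantive work is carried by the proof of Theorem~\ref{result:general} itself, not by this passage.
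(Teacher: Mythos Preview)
Your proposal is correct and follows exactly the paper's own approach: the corollary is stated immediately after Theorem~\ref{result:general} with the comment that taking $r=2$ gives information on rational-valued characters, and the only content is the observation $\cycl{2}=\mathbb{Q}$.
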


The conditions presented in \cite[Theorem~B]{Navarro-Tiep:Character_cyclotomic_field} are both necessary and sufficient for the existence of a non-principal character in $\irr_{p',\cycl{p}}(G)$. This is not true in our case. In fact, it is possible that a group $G$ has a non-principal character in $\irr_{p',\cycl{r}}(G)$ even if $(\abs{\no{G}{P}},2r)=1$ for some Sylow $p$-subgroup $P \leq G$. Examples of this fact are the alternating groups $\op{A}_7$ and $\op{A}_8$ for $p=7$ and $r = 5$, the group $\op{PSL}(2,19)$ for $p=19$ and $r=5$, and the group $\op{PSL}(2,27)$ for $p=3$ and $r=7$. However, the existence of a non-principal character in $\irr_{p',\cycl{r}}(G)$ can still tell us something on the group structure. In particular, we have a generalization of \cite[Theorem~C]{Guralnick-Navarro-Tiep:Finite_groups_odd_normalizer}.

\begin{result}
\label{result:solvable_subgroup}
Let $G$ be a finite group and let $p$ and $r$ be two primes, let $P \in \syl{p}{G}$ and assume that $(\abs{\no{G}{P}},2r)=1$. If $\chi \in \irr_{p'}(G)$ has values in $\cycl{r}$, then $N \leq \ker(\chi)$ for any $p$-solvable $N \lhd G$.
\end{result}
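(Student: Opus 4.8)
The plan is to prove the statement by induction on $\abs{G}$, in the contrapositive-friendly form: assuming $(\abs{\no{G}{P}},2r)=1$, any $p$-solvable $N\lhd G$ lies in $\ker(\chi)$ for every $\chi\in\irr_{p'}(G)$ with values in $\cycl{r}$. The engine is a reduction to the case where $N$ is a \emph{minimal} normal subgroup of $G$, which is then necessarily elementary abelian of order a power of some prime $q$; the whole difficulty is concentrated there.

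For the reduction, let $M\le N$ be minimal normal in $G$; as a subgroup of the $p$-solvable group $N$ it is $p$-solvable, hence elementary abelian. Granting the minimal-normal case below, we obtain $M\le\ker(\chi)$, so $\chi$ inflates a character $\bar\chi\in\irr_{p'}(G/M)$ with $\mathbb{Q}(\bar\chi)=\mathbb{Q}(\chi)\subseteq\cycl{r}$, while $N/M\lhd G/M$ is $p$-solvable and $PM/M\in\syl{p}{G/M}$. Since $P\cap M$ is a Sylow $p$-subgroup of $M$, we have $P\in\syl{p}{PM}$, so a Frattini argument in $PM\lhd\no{G}{PM}$ gives $\no{G}{PM}=M\,\no{G}{P}$, whence $\abs{\no{G/M}{PM/M}}=\abs{\no{G}{PM}/M}$ divides $\abs{\no{G}{P}}$ and remains coprime to $2r$. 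By induction $N/M\le\ker(\bar\chi)$, i.e. $N\le\ker(\chi)$. So everything comes down to the case $N=M$ minimal normal, elementary abelian of order a power of a prime $q$.

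For that core case I argue by contradiction. Pick a non-principal linear constituent $\theta\in\irr(M)$ of $\chi_M$ and set $T=I_G(\theta)$. As $\abs{G:T}$ divides $\chi(1)$ it is coprime to $p$, so $T$ contains a Sylow $p$-subgroup of $G$, and after replacing $\theta$ by a $G$-conjugate I may assume $P\le T$; then $P$ fixes $\theta$, so $[M,P]\le\ker(\theta)$ and $P$ centralises $M/[M,P]$. Since $M\lhd G$ and $P$ is normalised, $[M,P]$ is $\no{G}{P}$-invariant, so $\no{G}{P}$ acts on the nonzero $\mathbb{F}_q$-module $M/[M,P]$, and because $P$ acts trivially this is in fact a module for a $p'$-quotient $\bar H$ of $\no{G}{P}$. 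The goal is now to combine: (a) the structure of the extensions of $\theta$ to $MP$ in the coprime case $q\neq p$, respectively the structure of the $p'$-degree (hence linear) characters of a $p$-group when $q=p$; (b) the transitive $G$-action on the orbit of $\theta$ together with the $G$-irreducibility of $M$; and (c) the constraint $\mathbb{Q}(\chi)\subseteq\cycl{r}$, which forces $\gal(\cycl{\abs{G}}/\cycl{r})$ to stabilise the $G$-orbit of $\theta$ and thus to act on $\irr(M)$ and on the $\bar H$-invariants only in a restricted way — all in order to locate an element of order $2$ or $r$ in $\no{G}{P}$, contradicting the hypothesis. Where it applies, I would anchor the $r=2$ end of this analysis on \cite[Theorem~C]{Guralnick-Navarro-Tiep:Finite_groups_odd_normalizer}.

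The main obstacle is precisely step (c), and within it the sub-case where $\theta$ is not $G$-invariant: the Clifford correspondent of $\chi$ over $\theta$ need not have field of values inside $\cycl{r}$, since descending to $T$ typically enlarges it by $\cycl{q}$. I therefore expect to need either a direct argument inside $G$ exploiting the $\gal(\cycl{\abs{G}}/\cycl{r})$-action on the full $G$-orbit of $\theta$, or a separate treatment of the cases $q=r$, $q=2$, and $q\notin\{2,p,r\}$ — the last being the genuinely delicate one, because there the scalar action of $\mathbb{F}_q^{\times}$ on $\irr(M)$ is fully realised by Galois automorphisms and must be played off against both the $G$-action and the $\no{G}{P}$-action.
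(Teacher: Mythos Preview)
Your reduction to a minimal normal subgroup $M\le N$ is fine, but the sentence ``as a subgroup of the $p$-solvable group $N$ it is $p$-solvable, hence elementary abelian'' is wrong: a characteristically simple $p$-solvable group can perfectly well be a direct product of non-abelian simple $p'$-groups (e.g.\ $M\cong A_5$ with $p=7$). The paper does not bypass this case by pure group theory; it invokes a CFSG consequence (\cite[Theorem~3.4]{Isaacs-Malle-Navarro:Real_characters}): if a $p$-group $P$ acts coprimely on a group $M$ with $\abs{\ce{M}{P}}$ odd, then $M$ is solvable. Under your standing hypothesis $(\abs{\no{G}{P}},2r)=1$ the centraliser $\ce{M}{P}=\no{M}{P}$ is odd, so this forces $M$ solvable after all, but you must quote that result (or assume $N$ solvable) --- it is not automatic.

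For the abelian core case your outline is heading in the right direction but is missing the two short lemmas that make the argument go through cleanly, and the module $M/[M,P]$ is a detour. The paper's mechanism is: (i) pick a single Galois automorphism $\tau$ of order $2$ that is the identity on $r$-th roots of unity and complex conjugation on $r'$-roots (the ``magic'' automorphism for $\pi=\{r\}$); since $\mathbb{Q}(\chi)\subseteq\cycl{r}$ we have $\chi^{\tau}=\chi$. (ii) If the $P$-invariant constituent $\lambda$ of $\chi_M$ satisfies $\lambda^{\tau}=\lambda$, then $o(\lambda)\in\{1,2,r\}$, so $M$ is an elementary abelian $2$- or $r$-group and Glauberman gives $\ce{M}{P}>1$. (iii) If $\lambda^{\tau}\neq\lambda$, then $\lambda$ and $\lambda^{\tau}$ are two $P$-invariant constituents of $\chi_M$ in the same $G$-orbit; the key lemma (\cite[Lemma~2.1]{Navarro-Tiep-Vallejo:McKay_natural_correspondence}) says they are then conjugate by some $g\in\no{G}{P}$, and an elementary argument (since $\tau^{2}=1$ and the $G$- and Galois-actions commute) shows $g^{2}\in I_G(\lambda)$, whence $2\mid o(g)$. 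This replaces your entire step~(c): you do not need to separate the primes $q\in\{2,r\}$ from $q\notin\{2,p,r\}$ by hand, and you do not need to analyse the $\no{G}{P}$-module $M/[M,P]$ at all. The two ingredients you are missing are exactly ``$P$-invariant $G$-conjugates are $\no{G}{P}$-conjugate'' and ``$\phi^{\sigma}=\phi^{g}\neq\phi$ with $o(\sigma)$ an $r$-power forces $r\mid o(g)$''.
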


In particular, if in Theorem~\ref{result:solvable_subgroup} we take $N=G$, we have that for $p$-solvable groups the condition in Theorem~\ref{result:general} on the order of the Sylow normalizer is not only sufficient but also necessary. In particular, this generalizes \cite[Corollary~A]{Turull:Odd_character_correspondences}.

\begin{corollaryresult}
\label{corollaryresult:solvable}
Let $G$ be a $p$-solvable group for some prime number $p$ and let $P \in \syl{p}{G}$. Let $r$ be a prime number, then there exists a non-principal character $\chi \in \irr_{p'}(G)$ having values in $\cycl{r}$ if and only if either $2$ or $r$ divide $\abs{\no{G}{P}}$.
\end{corollaryresult}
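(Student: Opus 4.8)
The plan is to derive Corollary~\ref{corollaryresult:solvable} directly from Theorems~\ref{result:general} and~\ref{result:solvable_subgroup}; no argument beyond checking that the hypotheses match up is needed.

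First I would dispose of the ``if'' direction. If $2$ or $r$ divides $\abs{\no{G}{P}}$, then Theorem~\ref{result:general} --- which carries no solvability assumption --- immediately yields a non-principal $\chi \in \irr_{p'}(G)$ with $\mathbb{Q}(\chi) \subseteq \cycl{r}$, which is exactly the desired conclusion. In particular the $p$-solvability of $G$ plays no role in this implication.

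For the ``only if'' direction I would argue by contraposition, and this is where $p$-solvability enters. Assume $(\abs{\no{G}{P}}, 2r) = 1$ and let $\chi \in \irr_{p'}(G)$ have values in $\cycl{r}$; I want to show $\chi = 1_G$. Since $G$ is $p$-solvable, the whole group $N = G$ is a $p$-solvable normal subgroup of $G$, so Theorem~\ref{result:solvable_subgroup} applies with this choice of $N$ and gives $G = N \leq \ker(\chi)$. Hence $\chi$ is the principal character, which proves that $\irr_{p',\cycl{r}}(G)$ contains no non-principal character, i.e.\ the contrapositive of the ``only if'' statement.

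The honest answer to ``where is the main obstacle'' is that there is none at the level of this corollary: all the content lives in Theorems~\ref{result:general} and~\ref{result:solvable_subgroup}, and Corollary~\ref{corollaryresult:solvable} is merely the combination of the former with the $N = G$ case of the latter. The only point to double-check is the trivial observation that $G$ itself qualifies as the normal $p$-solvable subgroup $N$ in Theorem~\ref{result:solvable_subgroup}. Specializing further to $r = 2$ then recovers \cite[Corollary~A]{Turull:Odd_character_correspondences}, as remarked in the text.
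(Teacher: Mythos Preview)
Your proposal is correct and matches the paper's own approach exactly: the paper states Corollary~\ref{corollaryresult:solvable} as the immediate combination of Theorem~\ref{result:general} (for the ``if'' direction) with the $N=G$ case of Theorem~\ref{result:solvable_subgroup} (equivalently Theorem~\ref{theorem:solvable_primes_dividing}) for the ``only if'' direction. There is nothing to add.
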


For solvable groups, however, there is more we can say. If $P$ is the Sylow $p$-subgroup of a finite group $G$, we may write $\irr({1_P}^G)$ to denote the set of irreducible constituents of ${1_P}^G$. In \cite{Malle-Navarro:Characterizing_normal_sylow}, it is proved that $\irr_{p'}({1_P}^G)=\irr({1_P}^G)$ if and only if $P \lhd G$. On the other hand, if $p$ is odd, it is proved in \cite{Navarro-Tiep-Vallejo:McKay_natural_correspondence} that $\irr_{p'}({1_P}^G)=\{ 1_G \}$ if and only if $\no{G}{P}=P$.

In our situation, if the group $G$ is $p$-solvable, we can consider the set $\irr_{p',\cycl{r}}({1_P}^G)$ instead of $\irr_{p',\cycl{r}}(G)$ and still have the same results, with few exceptions.

\begin{result}
\label{result:solvable}
Let $G$ be a $p$-solvable group, for some odd prime number $p$ which is not a Fermat prime, and let $P \in \syl{p}{G}$. Let $r \neq p$ be a prime number, then there exists a non-principal character $\chi \in \irr_{p'}({1_P}^G)$ having values in $\cycl{r}$ if and only if either $2$ or $r$ divide $\abs{\no{G}{P}}$.
\end{result}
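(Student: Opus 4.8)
My plan is to treat the two implications separately; only the ``if'' direction will draw on the hypotheses that $p$ is odd and not a Fermat prime, or that $r\neq p$. The ``only if'' direction needs essentially nothing new: since ${1_P}^G$ is a character of $G$ we have $\irr_{p',\cycl{r}}({1_P}^G)\subseteq\irr_{p',\cycl{r}}(G)$, so a non-principal character of $p'$-degree occurring in ${1_P}^G$ and having values in $\cycl r$ is in particular a non-principal member of $\irr_{p'}(G)$ with values in $\cycl r$, and Corollary~\ref{corollaryresult:solvable}, applied to the $p$-solvable group $G$, forces $2$ or $r$ to divide $\abs{\no{G}{P}}$. (For this half neither $r\neq p$ nor the Fermat-prime condition is used.)

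For the ``if'' direction assume that $2$ or $r$ divides $\abs{\no{G}{P}}$, put $N=\no{G}{P}$ and $\overline N=N/P$, and first handle $N$ itself. As $P$ is a normal Sylow $p$-subgroup of $N$, the quotient $\overline N$ is a $p'$-group and the constituents of ${1_P}^N$ are exactly the inflations to $N$ of the characters in $\irr(\overline N)$; so $\irr_{p'}({1_P}^N)$ consists of those inflations. Since $p\notin\{2,r\}$ and $\abs N=\abs P\,\abs{\overline N}$ with $\abs P$ a power of $p$, the order of $\overline N$ is itself divisible by $2$ or by $r$. Applying Theorem~\ref{result:general} to the $p'$-group $\overline N$ (whose trivial Sylow $p$-subgroup is self-normalizing, so the hypothesis there reduces to ``$2$ or $r$ divides $\abs{\overline N}$'') — equivalently, invoking the Navarro--Tiep theorems \cite[Theorem~A]{Navarro-Tiep:Character_cyclotomic_field} and \cite[Theorem~B]{Navarro-Tiep:Rational_irreducible_characters}, together with $\mathbb{Q}\subseteq\cycl r$ — we get a non-principal $\overline\psi\in\irr(\overline N)$ with $\mathbb{Q}(\overline\psi)\subseteq\cycl r$; its inflation $\psi$ to $N$ then lies in $\irr_{p'}({1_P}^N)$, is non-principal, and satisfies $\mathbb{Q}(\psi)\subseteq\cycl r$.

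It remains to transport $\psi$ from $N$ up to $G$, and this is the substance of the proof. For $p$-solvable $G$ with $p$ odd, the relative McKay correspondence for $p$-solvable groups — the one underlying the result of \cite{Navarro-Tiep-Vallejo:McKay_natural_correspondence} recalled in the introduction — provides a natural bijection $\Omega\colon\irr_{p'}({1_P}^G)\to\irr_{p'}({1_P}^N)$ fixing the trivial character; setting $\chi=\Omega^{-1}(\psi)$ gives a non-principal $\chi\in\irr_{p'}({1_P}^G)$, and the task reduces to showing $\mathbb{Q}(\chi)\subseteq\cycl r$. I would prove this by induction on $\abs G$ along the usual dichotomy. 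If $\rad{p'}{G}=1$, then $\rad p G\neq 1$ (the group is $p$-solvable and non-trivial, the latter since $\abs{\no G P}$ is divisible by $2$ or $r$, hence $\no G P\neq 1$): every constituent of ${1_P}^G$ is trivial on $\rad p G$, so inflation from $G/\rad p G$ identifies $\irr_{p'}({1_P}^G)$ with the corresponding set of constituents for the smaller group $G/\rad p G$ while preserving fields of values, and since $\abs{\rad p G}$ is a power of the odd prime $p$ the relevant Sylow normalizer in $G/\rad p G$ still has order divisible by $2$ or $r$, so induction applies. If instead $M=\rad{p'}{G}\neq 1$, one runs Clifford theory over $M$: after replacing $G$ by the stabilizer of a suitably chosen $\theta\in\irr(M)$ and passing through the Glauberman correspondence attached to the coprime action there — whose acting $p$-group has odd order, so this step preserves fields of values by Turull's work on character correspondences over arbitrary fields (cf.\ \cite{Turull:Odd_character_correspondences}) — one is reduced to a proper section of $G$.

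The main obstacle is to keep the field of values inside $\cycl r$ through the Glauberman/Isaacs correspondences in the reduction whose acting group can have even order: such a step may twist the field of values by an element of the unique quadratic subfield of $\cycl p$, and it is precisely here that the assumption that $p$ is not a Fermat prime is invoked, to exclude or absorb this twist, while the assumption $r\neq p$ confines any remaining Galois discrepancy to $p$-power roots of unity and so does not affect $\cycl r$. Both hypotheses are genuinely necessary: for $r=p$ the equivalence already fails whenever $\no G P=P\neq 1$, since then $\irr_{p'}({1_P}^G)=\{1_G\}$ by \cite{Navarro-Tiep-Vallejo:McKay_natural_correspondence} although $p\mid\abs{\no G P}$; and, as small examples of $p$-solvable groups show, the Fermat-prime hypothesis cannot be dropped either.
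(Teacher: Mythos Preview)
Your ``only if'' direction is fine and matches the paper's argument via Theorem~\ref{theorem:solvable_primes_dividing} (equivalently Corollary~\ref{corollaryresult:solvable}).

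For the ``if'' direction your strategy --- produce $\psi$ on $N=\no{G}{P}$ and then transport it to $G$ via the Navarro--Tiep--Vallejo bijection $\Omega$ --- is genuinely different from the paper's, which instead \emph{constructs} $\chi$ directly by climbing a chief series (Theorem~\ref{theorem:existing_over}) and then invokes Proposition~\ref{proposition:over_1P} to force $[\chi_P,1_P]\neq 0$. Your route is not unreasonable, but the crucial step is left as a gap: you need $\Omega^{-1}$ to send characters with values in $\cycl{r}$ to characters with values in $\cycl{r}$, and you do not prove this. Your inductive sketch stalls precisely here (``the main obstacle'', in your own words), and the explanation you offer for how the Fermat-prime hypothesis resolves it is incorrect. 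There is no Glauberman/Isaacs step ``whose acting group can have even order'' in your reduction: the only coprime action present is that of the odd $p$-group $P$ on $M=\rad{p'}{G}$, and for that Turull's results already give field preservation. The quadratic subfield of $\cycl{p}$ plays no role, and whether $p$ is a Fermat prime has no bearing on the field behaviour of these correspondences.

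In the paper the Fermat-prime hypothesis enters at a completely different place. The construction in Theorem~\ref{theorem:existing_over} is arranged, in the $p$-solvable case, so that the resulting $\theta\in\irr_{p',\cycl{r}}(U)$ (with $P\leq U\leq G$ and $\theta^G\in\irr(G)$) has degree either odd or a power of~$2$. Since $p$ is odd and not a Fermat prime, $p-1$ has an odd prime divisor $s$, and one of the two degree alternatives guarantees $s\nmid\theta(1)$. Proposition~\ref{proposition:over_1P} then uses the action of an order-$s$ element of $\gal(\cycl{p}\mid\mathbb{Q})$ on the constituents of $\theta_P$, together with Lemma~\ref{lemma:sigma-invariant}, to conclude $[\theta_P,1_P]\neq 0$, hence $[\chi_P,1_P]\neq 0$ for $\chi=\theta^G$. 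That is the actual mechanism; it concerns divisibility of $\chi(1)$ by primes dividing $p-1$, not field twists in McKay-type bijections. If you want to salvage your approach you would have to prove that the NTV bijection commutes with $\gal(\cycl{\abs{G}}\mid\cycl{r})$, which is plausible but is itself a nontrivial lemma not supplied by the references you cite, and you would still need to explain where the Fermat-prime hypothesis is genuinely used.
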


We recall that a number $p$ is a \emph{Fermat prime} if and only if it is a prime number and $p - 1$ is a power of $2$. There are currently only 5 known Fermat primes and it is believed that there are no more.

Theorem~\ref{result:solvable} fails if $p$ is a Fermat prime. A counterexample is given by the group $G=\op{SL}(2,3)$, for $p = 3$ and $r = 2$, since in this case $\irr_{3',\cycl{}}(G)$ contains a single non-principal character and it does not lie over the principal character of a Sylow $3$-subgroup of $G$.

We mention that it is possible that Theorem~\ref{result:solvable} holds also without assumptions on group solvability. However, our technique cannot be employed with groups which are not $p$-solvable.

\section{Primes dividing $\mathbf{\abs{\no{G}{P}}}$ in $\mathbf{p}$-solvable groups}
\label{section:solvable_case}

In this section we will prove the following result, which in fact is equivalent to Theorem~\ref{result:solvable_subgroup} and to one direction of Theorem~\ref{result:solvable}.

\begin{theorem}
\label{theorem:solvable_primes_dividing}
Let $p$ and $r$ be two prime numbers, let $G$ be a finite group, let $N$ be a $p$-solvable normal subgroup of $G$ and let $P$ be a Sylow $p$-subgroup of $G$. Let $\chi \in \irr_{p'}(G)$ having values in $\cycl{r}$, then either $N \leq \ker(\chi)$ or $(\abs{\no{G}{P}},2r)>1$.
\end{theorem}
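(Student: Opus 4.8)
The plan is to argue by induction on $|G|$, reducing to the case where $N$ is a minimal normal subgroup not contained in $\ker(\chi)$, and then to exploit the $p$-solvability of $N$ together with Clifford theory to force a prime dividing $2r$ into $|\no{G}{P}|$. First I would let $\theta \in \irr(N)$ be an irreducible constituent of $\chi_N$; since $p \nmid \chi(1)$, the index $[G:G_\theta]$ and $\chi(1)/\theta(1)$ are both $p'$-numbers, so $p \nmid \theta(1)$. Because $\chi$ has values in $\cycl{r}$, the orbit of $\theta$ under $G$ (equivalently under $\gal{\cycl{|G|}}{\mathbb{Q}}$) is controlled by $\gal{\cycl{r}}{\mathbb{Q}}$, which is cyclic of order $r-1$; this is the mechanism by which the ``$r$'' enters. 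Standard Clifford-theoretic descent (choosing $\psi \in \irr(G_\theta)$ over $\theta$ with $\psi^G = \chi$, noting $\psi$ is $p'$-degree and, up to a Galois twist that does not enlarge the field, has values in $\cycl{r}$) lets me replace $G$ by $G_\theta$ when $G_\theta < G$, using that a Sylow $p$-subgroup of $G_\theta$ need not normalize as easily — so the genuinely new content is the case $N$ minimal and $\theta$ $G$-invariant.

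With $N$ minimal normal in $G$ and $p$-solvable, $N$ is an elementary abelian $q$-group for some prime $q$. If $q = p$: then $N \leq P$, $N$ is abelian, so $\theta$ is linear and $G$-invariant, hence $\no{G}{P}$ acts on the (nontrivial, since $N \not\leq \ker\chi$) linear character $\theta$; I would examine this action of $P\,\no{G}{P}$-combinations on $N$ and on the $p'$-part, invoking that a $p'$-element acting nontrivially and rationally-up-to-$\cycl r$ on a character of a $p$-group forces $2$ or $r$ into the relevant normalizer order — here the Glauberman-type / coprime-action arguments of the kind used in \cite{Navarro-Tiep:Character_cyclotomic_field} apply. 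If $q \neq p$: then $P$ acts coprimely on the elementary abelian $q$-group $N$, $\theta$ is a nontrivial $G$-invariant (so $P$-invariant) character of $N$, and I would use that $P$ fixes a nontrivial linear character of $N$ only if $P$ has nontrivial fixed points on the dual module, i.e. $\ce{N}{P} \neq 1$; combined with $\theta$ being fixed by $\no{G}{P}$ and having values generating a subfield of $\cycl r$, a counting/Galois argument on the $\no{G}{P}$-orbit of $\theta$ inside $\irr(\ce N P)$ produces an element of order dividing $2r$ normalizing $P$.

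The main obstacle I expect is the case $q \neq p$: controlling precisely how the Galois action (through $\gal{\cycl r}{\mathbb Q}$, cyclic of order $r-1$) on the set of $P$-invariant linear characters of $N$ interacts with the $\no{G}{P}$-action, so as to guarantee that a \emph{single} prime — $2$ or $r$ itself — must appear in $|\no{G}{P}|$ rather than merely some prime dividing $r-1$. The resolution should come from choosing $\theta$ so that $\mathbb{Q}(\theta) \subseteq \mathbb{Q}(\chi) \subseteq \cycl r$, whence the stabilizer of $\theta$ in $\gal{\cycl{|N|}}{\mathbb Q}$ contains the preimage of $\gal{\cycl{|N|}}{\cycl r \cap \cycl{|N|}}$; since this Galois group acts (via multiplication by $p'$-units) transitively enough on nontrivial characters of $N$, invariance of $\theta$ under both $P$ and this large Galois subgroup collapses the orbit, and the residual symmetry that is \emph{not} absorbed must be realized inside $\no{G}{P}$, yielding a nontrivial element whose order divides $2r$. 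I would also keep in mind the degenerate possibilities (e.g. $|N|=2$, forcing $r=2$ or an even normalizer directly; or $\theta$ extending to $\no{G}{P}N$) which dispatch immediately and which are the reason the conclusion is the weak disjunction $N \le \ker\chi$ \emph{or} $(\abs{\no G P},2r)>1$.
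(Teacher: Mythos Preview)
Your proposal has the right overall shape (reduce to $N$ minimal normal, pick a $P$-invariant constituent of $\chi_N$, exploit the Galois action), but there are three genuine gaps compared with the paper's argument.

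First, the claim that a $p$-solvable minimal normal subgroup must be elementary abelian is false: $N$ can be a direct product of non-abelian simple groups of $p'$-order, and this case does occur. The paper dispatches it by invoking the CFSG-dependent result of Isaacs--Malle--Navarro (\cite[Theorem~3.4]{Isaacs-Malle-Navarro:Real_characters}): if a $p$-group $P$ acts coprimely on a group and $|\ce{N}{P}|$ is odd, then $N$ is solvable. So if $N$ is non-abelian, $|\no{G}{P}|$ is even directly. Your outline simply omits this case.

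Second, the Clifford descent to $G_\theta$ is the wrong reduction, and you have flagged exactly why: the Clifford correspondent $\psi$ need not have values in $\cycl{r}$, and the Sylow normalizer in $G_\theta$ is not controlled. The paper avoids this entirely. One does \emph{not} pass to $G_\theta$; instead, one keeps the $P$-invariant constituent $\lambda$ of $\chi_N$ and uses Lemma~\ref{lemma:conjugate}: any two $P$-invariant $G$-conjugates of $\lambda$ are in fact $\no{G}{P}$-conjugate. This is what makes $\no{G}{P}$ appear, without any inductive descent.

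Third, your worry about getting only ``some prime dividing $r-1$'' rather than $2$ or $r$ is well-founded, and your proposed resolution is too vague to work. The paper's key device is a specific involutory Galois automorphism: the magic field automorphism $\tau \in \gal(\cycl{|G|}\mid\mathbb{Q})$ for the set $\{r\}$, which fixes $r$-roots of unity and complex-conjugates all others. Since $\mathbb{Q}(\chi)\subseteq\cycl{r}$, $\chi^\tau=\chi$, so $\lambda^\tau$ is again a $P$-invariant constituent of $\chi_N$. Now either $\lambda^\tau=\lambda$, which forces $o(\lambda)\in\{1,2,r\}$, hence $N$ is an (elementary abelian) $2$- or $r$-group, and Glauberman correspondence gives $\ce{N}{P}>1$; or $\lambda^\tau\neq\lambda$, and by Lemma~\ref{lemma:conjugate} we get $\lambda^\tau=\lambda^g$ for some $g\in\no{G}{P}$, whence Lemma~\ref{lemma:G-sigma_conjugate} with $o(\tau)=2$ forces $2\mid o(g)$. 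Note that this uniform argument also covers your $q=p$ subcase (where your appeal to ``Glauberman-type'' arguments cannot work, the action not being coprime): for $p$ odd one has $\lambda^\tau=\overline{\lambda}\neq\lambda$, and the same $\no{G}{P}$-conjugacy step applies.
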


Clearly, if in Theorem~\ref{theorem:solvable_primes_dividing} we take $N=G$ and $\chi \neq 1_G$, we have one direction of Corollary~\ref{corollaryresult:solvable} and of Theorem~\ref{result:solvable}.

In order to prove Theorem~\ref{theorem:solvable_primes_dividing} we need some preliminary result, the first of them being the following lemma from \cite{Navarro-Tiep-Vallejo:McKay_natural_correspondence}.

\begin{lemma}[{\cite[Lemma~2.1]{Navarro-Tiep-Vallejo:McKay_natural_correspondence}}]
\label{lemma:conjugate}
Let $G$ be a finite group, let $N \lhd G$ and let $P \in \syl{p}{G}$ for some prime number $p$. Let $\phi_1, \phi_2 \in \irr(N)$ be $G$-conjugate and suppose they are both $P$-invariant. Then, $\phi_2 = {\phi_1}^g$ for some $g \in \no{G}{P}$.
\end{lemma}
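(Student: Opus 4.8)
The statement is the standard fact that $G$-conjugacy among $P$-invariant irreducible characters of a normal subgroup is already realized by an element of the Sylow normalizer, and the natural route is a Frattini-type argument carried out inside the relevant inertia groups. The plan is to track the two stabilizers and reduce everything to Sylow's conjugacy theorem applied in a single intermediate subgroup.

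First I would set $T_i = I_G(\phi_i)$ for the inertia (stabilizer) group of $\phi_i$ in $G$, for $i = 1, 2$. The hypothesis that $\phi_1$ and $\phi_2$ are $P$-invariant translates precisely into $P \leq T_1$ and $P \leq T_2$. Since $P$ is a Sylow $p$-subgroup of the whole group $G$ and $P \leq T_1 \leq G$, the subgroup $P$ is automatically a Sylow $p$-subgroup of $T_1$: a $p$-subgroup of $G$ that lies in $T_1$ and is already of maximal $p$-power order in $G$ is a fortiori of maximal $p$-power order in $T_1$. The same remark applies to $P$ inside $T_2$.

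Next I would exploit the conjugacy $\phi_2 = \phi_1^{\,x}$ for some $x \in G$. Stabilizers of conjugate characters are conjugate, so $T_2 = T_1^{\,x} = x^{-1} T_1 x$. Combined with $P \leq T_2$, this yields $x P x^{-1} \leq T_1$. Thus both $P$ and $x P x^{-1}$ are Sylow $p$-subgroups of $T_1$, the latter because it is a conjugate of a Sylow $p$-subgroup of $G$ that happens to lie in $T_1$. Applying Sylow's conjugacy theorem inside $T_1$, I obtain $t \in T_1$ with $t (x P x^{-1}) t^{-1} = P$, that is, $(tx) P (tx)^{-1} = P$; hence $g := tx$ lies in $\no{G}{P}$.

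Finally I would verify that this $g$ does the job: since $t \in T_1$ fixes $\phi_1$, we get $\phi_1^{\,g} = \phi_1^{\,tx} = (\phi_1^{\,t})^x = \phi_1^{\,x} = \phi_2$, as required. I do not expect a genuine obstacle here, since the argument is entirely the interplay of Sylow's theorem with the inertia group; the only point demanding care is the bookkeeping of the conjugation convention, namely checking that the inertia group of a conjugate is the corresponding conjugate of the inertia group, and that the composite $g = tx$ simultaneously normalizes $P$ and carries $\phi_1$ to $\phi_2$.
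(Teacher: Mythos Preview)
Your argument is correct and is exactly the standard Frattini-type proof of this fact. The paper does not supply its own proof of this lemma at all; it merely cites \cite[Lemma~2.1]{Navarro-Tiep-Vallejo:McKay_natural_correspondence}, and the argument you wrote is essentially the one found there.
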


We now cite another result, which we need in order to deal with non-abelian chief factors of $p'$-order. The following result is cited in \cite{Isaacs-Malle-Navarro:Real_characters} and it is a consequence of the classification of finite simple groups. Of course, we could avoid to appeal to the classification if we strengthen the hypothesis of Theorem~\ref{result:solvable_subgroup} and of Theorem~\ref{result:solvable} and we assume, respectively, $N$ and $G$ to be solvable.

\begin{theorem}[{\cite[Theorem~3.4]{Isaacs-Malle-Navarro:Real_characters}}]
\label{theorem:coprime_action}
Let $P$ be a $p$-group that acts by automorphisms on a finite group $G$ of order not divisible by $p$, and suppose that $\abs{\ce{G}{P}}$ is odd. Then $G$ is solvable.
\end{theorem}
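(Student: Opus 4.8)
The plan is to induct on $\abs{G}$ and reduce, by standard coprime-action facts, to a fixed-point statement about a single nonabelian simple group, which is then settled using the classification. Take $G$ to be a counterexample of least order, so $\abs{\ce{G}{P}}$ is odd but $G$ is nonsolvable. I will rely on two facts about the coprime action of $P$ (coprime since $p \nmid \abs{G}$): for any $P$-invariant normal subgroup $N \lhd G$ one has $\ce{N}{P} = \ce{G}{P} \cap N$ and $\ce{G/N}{P} = \ce{G}{P}N/N$, whence $\abs{\ce{N}{P}} \cdot \abs{\ce{G/N}{P}} = \abs{\ce{G}{P}}$. In particular both $\ce{N}{P}$ and $\ce{G/N}{P}$ again have odd order, while $P$ acts coprimely on both $N$ and $G/N$ (whose orders divide $\abs{G}$). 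Hence, if $G$ had a proper nontrivial $P$-invariant normal subgroup $N$, minimality would make $N$ and $G/N$ solvable, forcing $G$ solvable, a contradiction. Therefore $G$ has no proper nontrivial $P$-invariant normal subgroup; equivalently, $G$ is a minimal normal subgroup of the semidirect product $G \rtimes P$.

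A minimal normal subgroup is characteristically simple, so $G = S_1 \times \cdots \times S_n$ with all $S_i$ isomorphic to a fixed simple group $S$; since $G$ is a nonsolvable counterexample, $S$ is nonabelian simple, and by the Feit--Thompson theorem $\abs{S}$ is even. The factors $S_i$ are permuted by $P$, and the action is transitive (any union of orbits would be a proper nontrivial $P$-invariant normal subgroup). Let $Q \leq P$ be the stabilizer of $S_1$. Projecting an element $x = (x_1, \ldots, x_n)$ of $\ce{G}{P}$ onto its first coordinate is a standard computation: transitivity makes $x_1$ determine all the other coordinates, and fixedness under $Q$ forces $x_1$ to be fixed by the automorphisms $Q$ induces on $S_1$. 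This yields an isomorphism $\ce{G}{P} \cong \ce{S}{\overline{Q}}$, where $\overline{Q}$ is the $p$-group of automorphisms induced on $S_1 \cong S$. Because $p \nmid \abs{S} = \abs{\op{Inn}(S)}$, we have $\overline{Q} \cap \op{Inn}(S) = 1$, so $\overline{Q}$ embeds into $\op{Out}(S)$.

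At this point the whole statement reduces to the following assertion, which is where the classification enters: for every nonabelian finite simple group $S$ and every $p$-subgroup $\overline{Q}$ of $\op{Out}(S)$ with $p \nmid \abs{S}$, the fixed-point subgroup $\ce{S}{\overline{Q}}$ has even order. Granting this, $\abs{\ce{G}{P}} = \abs{\ce{S}{\overline{Q}}}$ is even, contradicting oddness, and no minimal counterexample exists. I expect this simple-group assertion to be the main obstacle, to be verified family by family. The case $\overline{Q} = 1$ is exactly Feit--Thompson. For $S$ of Lie type the governing mechanism is that a $p$-power outer automorphism built from field automorphisms has, as its fixed points, a group of Lie type of the same type over a proper subfield, which is again of even order; the real work is to also control diagonal and graph (and graph--field) automorphisms whose $p$-part may be nonzero, the twisted and exceptional families, the alternating groups (where $\op{Out}(S)$ is small and the fixed points visibly contain an even-order subgroup), and the sporadic groups (where $\op{Out}(S)$ has order at most $2$, so a direct inspection suffices). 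That this detailed analysis is genuinely needed is shown by the fact that a coprime $p$-group can act on a nontrivial $2$-group with trivial fixed points (for instance a cyclic group of order $3$ acting on a Klein four-group): one therefore cannot simply pass to a $P$-invariant Sylow $2$-subgroup of $S$, since the even-order fixed points are produced by the global structure of $S$ rather than by any single Sylow subgroup.
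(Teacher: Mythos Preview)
The paper does not prove this theorem: it is quoted verbatim as \cite[Theorem~3.4]{Isaacs-Malle-Navarro:Real_characters} and used as a black box, with the remark that it is a consequence of the classification of finite simple groups. So there is no ``paper's own proof'' to compare against.

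Your reduction is correct and is the standard route to such a statement. The coprime-action identities $\ce{N}{P}=\ce{G}{P}\cap N$ and $\ce{G/N}{P}=\ce{G}{P}N/N$ are exactly what one needs to pass to a $P$-simple situation, and the identification $\ce{G}{P}\cong \ce{S_1}{\overline{Q}}$ for a transitive wreath-type action is also right (with $\overline{Q}$ the image of the point stabilizer in $\aut(S_1)$). The observation that $\overline{Q}$ embeds in $\op{Out}(S)$ because $p\nmid\abs{S}$ is correct and is what makes the case analysis tractable. Your closing paragraph is honest about where the real work lies: the assertion that $\ce{S}{\overline{Q}}$ has even order for every nonabelian simple $S$ and every $p$-subgroup $\overline{Q}\leq\op{Out}(S)$ with $p\nmid\abs{S}$ is precisely the content proved in \cite{Isaacs-Malle-Navarro:Real_characters} by going through the families (with the field-automorphism mechanism you describe doing most of the Lie-type cases). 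As written, your argument is a correct outline whose final step is a sketch rather than a proof; that is consistent with how the present paper treats the result, namely as an imported theorem rather than something to be reproved here.
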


We now study what happens when two characters of a normal subgroup of $G$ are both $G$-conjugate and $\sigma$-conjugate, for some $\sigma \in \gal(\cycl{\abs{G}} \mid \mathbb{Q})$.

\begin{lemma}
\label{lemma:G-sigma_conjugate}
Let $G$ be a finite group, let $N \lhd G$ and let $\phi \in \irr(N)$. Let $\sigma \in \gal(\cycl{\abs{G}} \mid \mathbb{Q})$ such that $o(\sigma)=r^a$, for some prime number $r$, and suppose that $\phi^{\sigma} \neq \phi$ and $\phi^{\sigma} = \phi^g$ for some $g \in G$. Then, $r \mid o(g)$.
\end{lemma}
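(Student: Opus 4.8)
The plan is to show that the hypothesis forces $g$ to have order divisible by $r$ by examining how $\sigma$ and conjugation by $g$ act on the set of $G$-conjugates of $\phi$. First I would set $\lambda = o(g)$ and suppose, toward a contradiction, that $r \nmid \lambda$. The key object is the cyclic group $\langle \sigma \rangle$ of order $r^a$ together with the action of $g$ by conjugation on $\irr(N)$; since $\phi^\sigma = \phi^g$, the two operations ``apply $\sigma$'' and ``conjugate by $g^{-1}$'' agree on $\phi$. I would like to iterate: because $\sigma$ and the $G$-conjugation action need not commute in general, the cleanest route is to note that $\sigma$ permutes the $G$-orbit of $\phi$ (as a Galois automorphism sends $G$-conjugates to $G$-conjugates, $\phi^{hg'} = (\phi^h)^{g'}$ being preserved up to relabeling), and on that finite set $\sigma$ has order a power of $r$ while the relevant cyclic motion induced by $g$ has order dividing $\lambda$.

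More concretely, here is the mechanism I would exploit. Consider the stabilizer $T = I_G(\phi)$ (the inertia group), and the coset space $G/T$, which is in bijection with the $G$-orbit of $\phi$ via $Tx \mapsto \phi^x$. The automorphism $\sigma$ induces a permutation $\bar\sigma$ of $G/T$ of order dividing $r^a$, and the equation $\phi^\sigma = \phi^g$ says $\bar\sigma$ sends the trivial coset $T$ to $Tg$. Now apply $\sigma$ repeatedly: $\phi^{\sigma^k}$ is a specific conjugate of $\phi$, and I would track the coset it lands in. The point is to compare the $\sigma$-orbit of $\phi$ (whose length divides $r^a$, hence is a power of $r$) with the description of that orbit coming from powers of $g$. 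If one can show the $\sigma$-orbit of $\phi$ is contained in $\{\phi^{g^j} : j \geq 0\}$ — which has size dividing $\lambda = o(g)$ — then the orbit length, being both a nontrivial power of $r$ (nontrivial since $\phi^\sigma \neq \phi$) and a divisor of $\lambda$, forces $r \mid \lambda$, the desired conclusion. The containment $\phi^{\sigma^k} \in \langle g\rangle$-orbit of $\phi$ should follow by induction once one checks that $\sigma$ commutes appropriately with conjugation by elements of $\langle g \rangle$, or by a more careful bookkeeping argument using that $\sigma$ fixes rational numbers and $g \in G$ has order coprime to $r$ so that $\sigma$ acts trivially on the relevant roots of unity of $r'$-order attached to $\langle g\rangle$.

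The main obstacle I anticipate is precisely that $\sigma$ and conjugation by $g$ need not literally commute as operators on $\irr(N)$, so the naive induction ``$\phi^{\sigma^2} = (\phi^\sigma)^\sigma = (\phi^g)^\sigma = (\phi^\sigma)^g = \phi^{g^2}$'' is not automatically valid — the step $(\phi^g)^\sigma = (\phi^\sigma)^g$ is the crux. I would handle this by working inside $\gal(\cycl{|G|}\mid\mathbb{Q})$ and using that $\sigma$ has $r$-power order: decompose the cyclotomic field according to the $r$-part and $r'$-part of $|G|$, observe that $\sigma$ acts trivially on the $r'$-part, and use that conjugation by $g$ only ``moves'' character values by roots of unity whose orders divide $o(g)$ — and since $o(g)$ might have an $r$-part is exactly what we are trying to rule out. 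So under the contradiction hypothesis $r \nmid o(g)$, $\sigma$ genuinely does commute with conjugation by $g$ on the values of $\phi$ and its conjugates, the induction goes through, we get $\phi^{\sigma^{r^a}} = \phi^{g^{r^a}} = \phi$ only after $r^a$ steps while also $\phi = \phi^{\sigma^{r^a}}$, and comparing orbit lengths yields that the order of the permutation induced by $g$ on the $\sigma$-orbit is a power of $r$ dividing $o(g)$, hence equals $1$, contradicting $\phi^\sigma = \phi^g \neq \phi$. That contradiction proves $r \mid o(g)$.
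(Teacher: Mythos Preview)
Your overall strategy --- iterate the relation $\phi^{\sigma}=\phi^{g}$ to get $\phi^{\sigma^{k}}=\phi^{g^{k}}$, then compare the $\sigma$-orbit length (a nontrivial power of $r$) with a divisor of $o(g)$ --- is exactly the paper's approach. The paper carries this out directly (no contradiction): from $\phi^{g^{r^{a}}}=\phi^{\sigma^{r^{a}}}=\phi$ one gets $g^{r^{a}}\in I_{G}(\phi)$, so the order of $gI_{G}(\phi)$ divides $r^{a}$ and is $>1$ since $\phi^{g}\neq\phi$, whence $r\mid o(g)$.

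Where your plan goes astray is the paragraph on the ``main obstacle.'' There is no obstacle: the actions of $G$ and of $\gal(\cycl{\abs{G}}\mid\mathbb{Q})$ on $\irr(N)$ \emph{always} commute. For any $n\in N$,
\[
(\phi^{g})^{\sigma}(n)=\sigma\bigl(\phi(gng^{-1})\bigr)=(\phi^{\sigma})^{g}(n),
\]
because conjugation by $g$ acts on the \emph{argument} of $\phi$, not on its values, while $\sigma$ acts on the values; the two operations live on different sides and trivially commute. Your claim that ``conjugation by $g$ only moves character values by roots of unity whose orders divide $o(g)$'' is a misconception: $\phi^{g}$ takes exactly the same set of values as $\phi$, merely reindexed on $N$. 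Consequently you do not need the hypothesis $r\nmid o(g)$ to get commutativity, and the cyclotomic decomposition argument you sketch is both unnecessary and not doing what you say it does. Once you accept $(\phi^{g})^{\sigma}=(\phi^{\sigma})^{g}$ as immediate, the induction $\phi^{\sigma^{k}}=\phi^{g^{k}}$ is one line, and the proof finishes exactly as the paper does, without any contradiction setup.
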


\begin{proof}
First, notice that the actions of $G$ and of $\sigma$ on $\irr(N)$ commute. It follows that, if $\phi^g=\phi^{\sigma}$, then ${\op{I}_G(\phi)}^g = \op{I}_G(\phi^g) = \op{I}_G(\phi^{\sigma}) = \op{I}_G(\phi)$ and, thus, $g$ normalizes $I=\op{I}_G(\phi)$. Moreover, for any integer $k$ we have that
$$ \phi^{\sigma^k} = (\phi^{\sigma})^{\sigma^{k-1}} = (\phi^g)^{\sigma^{k-1}} = (\phi^{\sigma^{k-1}})^g $$
and, working by induction on $\abs{k}$, we have that $\phi^{\sigma^k} = \phi^{g^k}$. In particular, for $h = o(\sigma)$ we have that $\phi^{g^h} = \phi^{\sigma^h} = \phi$ and, thus, $g^h \in I$. It follows that the order of the element $gI \in \groupgen{gI}/I$ divides $h=r^a$ and, since $g \notin I$, we have that $r \mid o(gI)$ and, thus, $r \mid o(g)$.
\end{proof}

Before we can finally conclude this section, we need to introduce a special field automorphism that we are going to use in the proof of Theorem~\ref{theorem:solvable_primes_dividing}. First, let us briefly recall some notation. Let $\pi$ be a set of prime numbers, then $\pi'$ is its complementary set, i.e., the set all prime numbers which are not elements $\pi$. Moreover, a complex number $z \in \mathbb{C}$ is said to be a $\pi$-root of unity if $z^n = 1$ for some positive integer $n$ such that all the prime divisors of $n$ are contained in $\pi$. Notice that 1 is the only element in $\mathbb{C}$ to be both a $\pi$-root and a $\pi'$-root of unity.

For a finite group $G$ and a set of prime numbers $\pi$, let $\tau \in \gal(\cycl{\abs{G}} \mid \mathbb{Q})$ such that $\tau$ acts like the identity on all $\pi$-roots of unity in $\cycl{\abs{G}}$ and like complex conjugation on all $\pi'$-roots of unity. The existence of such field automorphism is discussed in \cite{Isaacs:Characters_groups_odd_order} and, coherently with the notation used there, we may refer to $\tau$ as a \emph{magic field automorphism} for $G$ with respect to $\pi$.

We are now ready to prove Theorem~\ref{theorem:solvable_primes_dividing}.

\begin{proof}[Proof of Theorem~\ref{theorem:solvable_primes_dividing}]
At first, we may notice that we can assume that either $2$ or $r$ divide $\abs{G}$, since otherwise there would be no non-trivial irreducible character having values in $\cycl{r}$, and we can also assume that $r \neq p$, since the thesis is trivial otherwise.

Let $\chi \in \irr_{p'}(G)$ having values in $\cycl{r}$; if $N \leq \ker(\chi)$ we are done, thus, we may assume that $N \nleq \ker(\chi)$ and we will prove that either 2 or $r$ divide $\abs{\no{G}{P}}$. At this point, there is no loss to assume that $\chi$ is faithful and $N$ is a minimal normal subgroup of $G$.

If $N$ is a non-abelian $p'$-group, then by Theorem~\ref{theorem:coprime_action} we have that the order of $\no{N}{P} = \ce{N}{P}$ is even and we are done. Thus, we can assume that $N$ is abelian and, since $N \nleq \ker(\chi)$ and $p \nmid \chi(1)$, we have that there exists a $P$-invariant non-principal irreducible constituent $\lambda \in \irr(N)$ of $\chi_N$. Let now $\tau \in \gal(\cycl{\abs{G}} \mid \mathbb{Q})$ be a magic field automorphism for $G$ with respect to $\pi=\{r\}$ and notice that $\chi^{\tau}=\chi$, since $\mathbb{Q}(\chi) \leq \cycl{r}$.

Since $\lambda$ is linear, we have that $\lambda^{\tau} = \lambda$ if and only if $o(\lambda)$ is equal to either 2 or $r$, i.e., if and only if $N$ is either a $2$- or a $r$-group (remember that $N$ is an elementary abelian group, because of its minimality as a normal subgroup of $G$). Since $\lambda$ is a non-principal $P$-invariant irreducible character, by Glauberman correspondence we have that $\no{N}{P} = \ce{N}{P} > 1$ and it follows that, if $\lambda^{\tau} = \lambda$, then either $2$ or $r$ divide $\abs{\no{G}{P}}$ and we are done.

Suppose now that $\lambda^{\tau} \neq \lambda$. Since the actions of $\tau$ and of $P$ on $\irr(N)$ commute, we have that also $\lambda^{\tau}$ is $P$-invariant and, since $\chi$ is $\tau$-invariant, we have that both $\lambda$ and $\lambda^{\tau}$ are irreducible constituents of $\chi_N$ and, thus, they are $G$-conjugate. It follows from Lemma~\ref{lemma:conjugate} that $\lambda^{\tau} = \lambda^g$ for some $g \in \no{G}{P}$. Moreover, since $o(\tau)=2$, we have from Lemma~\ref{lemma:G-sigma_conjugate} that $2 \mid o(g)$ and it follows that $2 \mid \abs{\no{G}{P}}$ also in this case.
\end{proof}

\section{Isomorphism of character triples}

In this section, we will be dealing with the theory of central extensions and character triples isomorphisms. The notation and terminology we will use in this section are standard for the theory and, for this reason, we will omit some definitions. We refer to \cite[Chapter~11]{Isaacs} whenever the terms we use result unfamiliar to the reader.

Given a group $G$, Schur theorem says that we can always find a central extension $(\Gamma, \pi)$ of $G$ with the projective lifting property and such that the \emph{standard map} $\eta : \irr(\ker(\pi)) \mapsto M(G)$ is an isomorphism (see \cite[Theorem~11.17]{Isaacs}). Such group $\Gamma$ is called a \emph{Schur representation group} for $G$ and it is a group with the projective lifting property for $G$ with smallest possible order. 

Let $(G,N,\phi)$ be a character triple and let $(\Gamma, \pi)$ be a central extension of $G/N$ such that $\Gamma$ is a Schur representation group for $G/N$. If $A = \ker(\pi)$, we know from \cite[Theorem~11.28]{Isaacs} and \cite[Problem~11.13]{Isaacs} that $(G,N,\phi)$ and $(\Gamma,A,\lambda)$ are strongly isomorphic character triples, for some $\lambda \in \lin(A)$ which is uniquely determined by $\phi$ and by the standard map $\eta$. We may call $(\Gamma,A,\lambda)$ a \emph{canonically constructed} character triple isomorphic to $(G,N,\phi)$.

The two main theorems of this section will prove some properties concerning a canonically constructed character triple isomorphism, particularly for what concerns character field of values. Our results are sometimes similar to the ones presented by Turull in \cite{Turull:Clifford_theory}. In fact, both our hypothesis and our thesis are weaker then the ones of Turull, since we do no require that $\mathbb{Q}(\phi) = \mathbb{Q}(\lambda)$.

\begin{theorem}
\label{theorem:character_triple}
Let $(G,N,\phi)$ be a character triple, and let $(\Gamma,A,\lambda)$ be a canonically constructed character triple isomorphic to $(G,N,\phi)$. For each $\sigma \in \gal( \cycl{ \abs{G} } \mid \mathbb{Q} )$, if $\phi^{\sigma} = \phi$, then also $\lambda^{\sigma} = \lambda$.
\end{theorem}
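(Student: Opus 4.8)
The plan is to trace through the construction of the canonically constructed triple $(\Gamma,A,\lambda)$ and to exploit the naturality of all the maps involved with respect to the Galois action. Recall that $\Gamma$ is a Schur representation group for $G/N$, with $A = \ker(\pi)$, and that the standard map $\eta : \irr(A) \to M(G/N)$ is an isomorphism; the character $\lambda \in \lin(A)$ is singled out by the requirement that $\eta(\lambda)$ equals the cohomology class $[\beta] \in M(G/N)$ attached to $\phi$ via the projective representation afforded by $\phi$ (i.e.\ the obstruction class of the character triple $(G,N,\phi)$, which lives in $H^2(G/N, \mathbb{C}^\times)$ and hence in $M(G/N)$). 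So it suffices to show two things: first, that a Galois automorphism $\sigma$ fixing $\phi$ also fixes the obstruction class $[\beta]$; and second, that $\sigma$ acts on $\irr(A) \cong A^\vee$ compatibly with the Schur multiplier $M(G/N)$ under $\eta$, so that $\sigma$ fixes the class $\eta(\lambda)$ iff it fixes $\lambda$.

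First I would set up the Galois action on cohomology. A fixed $\sigma \in \gal(\cycl{\abs{G}} \mid \mathbb{Q})$ acts on $\mathbb{C}^\times$ by acting on roots of unity of order dividing $\abs{G}$ (which is all that matters for finite-order cocycles), hence acts on $H^2(G/N,\mathbb{C}^\times) = M(G/N)$ and on $\irr(A) = \hom(A,\mathbb{C}^\times)$; I must check $\eta$ is $\sigma$-equivariant. This should follow because the standard map $\eta$ is defined purely functorially: given $\mu \in \irr(A)$, one inflates a representation of $\Gamma/A = G/N$... more precisely, $\eta(\mu)$ is obtained by pushing out the central extension $1 \to A \to \Gamma \to G/N \to 1$ along $\mu : A \to \mathbb{C}^\times$, and pushout commutes with the $\sigma$-twist because $\sigma$ is a group automorphism of $\mathbb{C}^\times$; so $\eta(\mu^\sigma) = \eta(\mu)^\sigma$. (If the paper's preferred description of $\eta$ via \cite[Theorem~11.17]{Isaacs} is through cocycles rather than pushouts, the same equivariance holds at the level of cocycle formulas — one just applies $\sigma$ entrywise.)

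Next I would check that $\phi^\sigma = \phi$ forces the obstruction class to be $\sigma$-fixed. Let $\mathcal{P}$ be a projective representation of $G$ associated to $\phi$ with factor set $\beta$, so $\mathcal{P}(g)\mathcal{P}(h) = \beta(g,h)\mathcal{P}(gh)$ and $\mathcal{P}|_N$ affords $\phi$. Applying $\sigma$ entrywise to $\mathcal{P}$ gives a projective representation $\mathcal{P}^\sigma$ with factor set $\beta^\sigma$ whose restriction to $N$ affords $\phi^\sigma = \phi$; since projective representations of $G$ lying over $\phi$ have factor sets in a single cohomology class (they differ by scalar functions $G \to \mathbb{C}^\times$ — this is the standard uniqueness in Clifford theory, \cite[Chapter~11]{Isaacs}), we get $[\beta^\sigma] = [\beta]$, i.e.\ $[\beta]^\sigma = [\beta]$ in $M(G/N)$. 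Combining: $\eta(\lambda)^\sigma = [\beta]^\sigma = [\beta] = \eta(\lambda)$, and since $\eta$ is an injective $\sigma$-equivariant map, $\lambda^\sigma = \lambda$.

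The main obstacle I anticipate is purely bookkeeping: pinning down the paper's exact definition of the standard map $\eta$ and of "canonically constructed" (via \cite[Theorem~11.17, 11.28]{Isaacs} and \cite[Problem~11.13]{Isaacs}), and verifying that every intermediate object — the factor set $\beta$, the cohomology class, the pushout extension — transforms correctly under $\sigma$, being careful that $\sigma$ really does act on the relevant roots of unity (here all cocycle values have order dividing $\abs{G}$, so $\sigma \in \gal(\cycl{\abs{G}}\mid\mathbb{Q})$ is exactly the right group to use, and this is why the statement restricts to such $\sigma$). There is no hard idea; the work is in confirming the naturality of each step, and perhaps in reconciling whether the paper wants to route through explicit cocycles or through the intrinsic extension-theoretic description — I would use whichever matches the conventions of \cite{Isaacs} that the paper has already invoked.
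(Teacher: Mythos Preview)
Your proposal is correct and follows essentially the same route as the paper: both arguments reduce the claim to (i) the $\sigma$-equivariance of the standard map $\eta$ and (ii) the $\sigma$-invariance of the cohomology class $\overline{\beta}$ attached to $\phi$ via a projective representation, and both verify (ii) by applying $\sigma$ to the projective representation and using the uniqueness statement in \cite[Theorem~11.2]{Isaacs}. The only cosmetic differences are that the paper works explicitly with cocycles throughout (rather than pushouts) and records the convention $\eta(\lambda)=\overline{\beta}^{-1}$ rather than $\eta(\lambda)=[\beta]$, neither of which affects the argument.
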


\begin{proof}
We shall recall first some facts and definitions from \cite[Chapter~11]{Isaacs}.

\begin{itemize}
\item[a)] Let $\mathfrak{Y}$ be a representation of $N$ affording $\phi$, let $\mathfrak{X}$ be a projective representation associated with $\mathfrak{Y}$ according to \cite[Theorem~11.2]{Isaacs} and let $\alpha \in Z(G,\mathbb{C}^{\times})$ be the factor set associated with $\mathfrak{X}$.
\item[b)] Let $\beta \in Z(G/N,\mathbb{C}^{\times})$ such that $\beta(aN,bN)=\alpha(a,b)$, which is well defined, as proved in \cite[Theorem~11.7]{Isaacs}, and let $\overline{\beta}$ be its image in $H(G/N,\mathbb{C}^{\times})$.
\item[c)] Let $\alpha_1 \in Z(G/N,A)$ be the $A$-factor set associated with $\Gamma$ as in \cite[Lemma~11.9]{Isaacs} and, for $\gamma \in \irr(A)$, define $\gamma(\alpha_1) \in Z(G/N,\mathbb{C}^{\times})$ as $\gamma(\alpha_1)(g,h) = \gamma(\alpha_1(g,h))$, and define $\overline{\gamma(\alpha_1)}$ as the image of $\gamma(\alpha_1)$ in $H(G/N,\mathbb{C}^{\times})$. Let
$$ \eta: \irr(A) \mapsto H(G/N,\mathbb{C}^{\times}) $$
be the standard map, defined as $\eta(\gamma)=\overline{\gamma(\alpha_1)}$. Remember that, as a consequence of \cite[Theorem~11.17]{Isaacs}, the map $\eta$ is a group isomorphism.
\item[d)] As in the proof of \cite[Theorem~11.28]{Isaacs}, $\lambda$ is the element of $\irr(A)$ such that $\eta(\lambda)=\overline{\beta}^{-1}$.
\end{itemize}

Since $\eta$ is an isomorphism, in order to prove that $\lambda^{\sigma}=\lambda$ it is enough to show that $\eta(\lambda^{\sigma}) = \eta(\lambda)$. For this, we need to prove that
\begin{itemize}
\item[1)] $\eta(\lambda^{\sigma}) = (\eta(\lambda))^{\sigma}$ and
\item[2)] $\overline{\beta}^{\sigma} = \overline{\beta}$.
\end{itemize}

The proof of 1) follows easily from the fact that, by definition, $\lambda^{\sigma}(a) = (\lambda(a))^{\sigma}$ for each $a \in A$, thus,
$$ \eta(\lambda^{\sigma}) = \overline{\lambda^{\sigma}(\alpha_1)} = (\overline{\lambda(\alpha_1)})^{\sigma} = (\eta(\lambda))^{\sigma}. $$

In order to prove 2), we may notice that $\mathfrak{Y}^{\sigma}$ affords $\phi^{\sigma}=\phi$, thus, $\mathfrak{Y}^{\sigma} = P^{-1} \mathfrak{Y} P$ for some non-singular matrix $P$. We have that
\begin{itemize}
\item $P^{-1} \mathfrak{X} P$ is a projective representation associated with $P^{-1} \mathfrak{Y} P$ and $\alpha$ is the associated factor set;
\item $\mathfrak{X}^{\sigma}$ is a projective representation associated with $\mathfrak{Y}^{\sigma}$ with factor set $\alpha^{\sigma}$.
\end{itemize}
Since $\mathfrak{Y}^{\sigma} = P^{-1} \mathfrak{Y} P$, we have from \cite[Theorem~11.2]{Isaacs} that $\mathfrak{X}^{\sigma} = P^{-1} \mathfrak{X} P \mu$ for some $\mu: G \mapsto \mathbb{C}^{\times}$, which is constant on cosets of $N$ in $G$. It follows that $\alpha^{\sigma} = \alpha \nu$ for $\nu(g,h)=\mu(g)\mu(h)\mu(gh)^{-1} \in B(G,\mathbb{C}^{\times})$; thus, $\beta^{\sigma}=\beta \nu$ and $\overline{\beta^{\sigma}} = \overline{\beta}$ in $H(G,\mathbb{C}^{\times}) = Z(G,\mathbb{C}^{\times})/B(G,\mathbb{C}^{\times})$, as we required.

Finally, we have that
$$ \eta(\lambda^{\sigma}) = (\eta(\lambda))^{\sigma} = (\overline{\beta}^{-1})^{\sigma} = \overline{\beta}^{-1} = \eta(\lambda)  $$
and, thus, $\lambda^{\sigma}=\lambda$.
\end{proof}

\begin{corollary}
\label{corollary:character_triple}
If $(G,N,\phi)$ is a character triple and $\phi$ has values in $K$, for some field $K \geq \mathbb{Q}$, then $(G,N,\phi)$ is strongly congruent to a character triple $(\Gamma, A, \lambda)$ such that $A \leq \ze{\Gamma}$ and $\lambda$ has values in $K$. In particular, $(\Gamma, A, \lambda)$ can be chosen such that it is a canonically constructed character triple isomorphic to $(G, N, \phi)$.
\end{corollary}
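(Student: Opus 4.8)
The plan is to deduce the corollary from Theorem~\ref{theorem:character_triple} by an essentially formal Galois-theoretic argument. Let $(\Gamma, A, \lambda)$ be a canonically constructed character triple isomorphic to $(G,N,\phi)$, as described at the end of Section~3; by construction $\Gamma$ is a Schur representation group for $G/N$ and $A = \ker(\pi) \leq \ze{\Gamma}$, so the claim about centrality is immediate and only the statement $\mathbb{Q}(\lambda) \subseteq K$ requires argument. The key observation is that $K \cap \cycl{\abs{G}}$ is itself a cyclotomic (abelian, hence Galois over $\mathbb{Q}$) field, and since $\phi$ has values in $\cycl{\abs{G}}$ — its values are sums of $|N|$-th roots of unity — we in fact have $\mathbb{Q}(\phi) \subseteq K \cap \cycl{\abs{G}}$. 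Thus $\phi$ is fixed by every $\sigma \in \gal(\cycl{\abs{G}} \mid K \cap \cycl{\abs{G}}) = \gal(\cycl{\abs{G}} \mid K)$ (identifying this latter group with the subgroup of $\gal(\cycl{\abs{G}} \mid \mathbb{Q})$ fixing $K$ pointwise).

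Next I would invoke Theorem~\ref{theorem:character_triple}: for each such $\sigma$, since $\phi^\sigma = \phi$ we conclude $\lambda^\sigma = \lambda$. Now $\lambda \in \lin(A)$ with $A$ a subgroup of $\Gamma$, and since $A \leq \ze{\Gamma} \leq \Gamma$ has order dividing $\abs{\Gamma}$; but one must be slightly careful here, because a priori $\abs{\Gamma}$ need not divide $\abs{G}$, so the values of $\lambda$ need not lie in $\cycl{\abs{G}}$. However, $\Gamma$ being a Schur representation group for $G/N$, we have $\abs{\Gamma} = \abs{G/N} \cdot \abs{M(G/N)}$, and $\abs{M(G/N)}$ divides $\abs{G/N}$ (in fact its exponent does), so the values of $\lambda$ lie in $\cycl{\abs{G}^2} \supseteq \cycl{\abs{G}}$ — a cyclotomic field large enough to contain all the relevant roots of unity. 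The cleanest route is therefore to work throughout with $\gal(\cycl{m} \mid \mathbb{Q})$ for a suitable common modulus $m$ divisible by both $\abs{G}$ and $\exp(A)$; Theorem~\ref{theorem:character_triple} as stated uses $\cycl{\abs{G}}$, but its proof only needs $\sigma$ to be defined on the roots of unity occurring in $\phi$ and in the $A$-factor set $\alpha_1$, so restricting-then-extending automorphisms along $\cycl{\abs{G}} \subseteq \cycl{m}$ causes no difficulty. Having fixed this, $\lambda^\sigma = \lambda$ for all $\sigma \in \gal(\cycl{m} \mid K)$ means the values of $\lambda$ are fixed by $\gal(\cycl{m} \mid K)$, hence lie in $K \cap \cycl{m} \subseteq K$ by the Galois correspondence.

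Finally I would assemble these pieces: $(G,N,\phi)$ is strongly isomorphic to the canonically constructed $(\Gamma, A, \lambda)$ by \cite[Theorem~11.28]{Isaacs} and \cite[Problem~11.13]{Isaacs} (the strong congruence being exactly this strong isomorphism together with the compatibility of the isomorphisms with multiplication by linear characters of the relevant quotients), $A \leq \ze{\Gamma}$ holds by construction, and $\mathbb{Q}(\lambda) \subseteq K$ by the argument above. I expect the main obstacle to be the bookkeeping around the modulus: ensuring that the Galois automorphisms used to fix $\phi$ and the ones detecting the values of $\lambda$ live in the same group, which amounts to checking that $\exp(A) = \exp(M(G/N))$ divides a power of $\abs{G}$ (true since $M(G/N)$ is a finite abelian group whose exponent divides $\abs{G/N}$) and that Theorem~\ref{theorem:character_triple} is insensitive to enlarging the cyclotomic field — both routine, but worth stating explicitly so the deduction is clean.
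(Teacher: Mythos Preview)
Your proposal is correct and follows essentially the same route as the paper's proof: take the canonically constructed triple and apply Theorem~\ref{theorem:character_triple} together with the Galois correspondence. The paper's argument is a one-liner that does not address the modulus issue you raise; your worry dissolves once you use the fact you mention parenthetically, namely that $\exp(A)=\exp(M(G/N))$ divides $|G/N|\mid |G|$, so the values of $\lambda$ already lie in $\cycl{|G|}$ and no enlargement of the cyclotomic field (or reexamination of Theorem~\ref{theorem:character_triple}) is needed.
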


\begin{proof}
We can take $(\Gamma, A, \lambda)$ as in Theorem~\ref{theorem:character_triple} and we have that $\lambda$ is $\sigma$-invariant for every $\sigma \in \gal(\cycl{\abs{G}} \mid K)$ and, therefore, $\lambda$ has values in $K$.
\end{proof}

We may notice that, as a consequence of Theorem~\ref{theorem:character_triple}, a character $\psi \in \irr(\Gamma) \smid \lambda)$ has values in $\cycl{n}$, with $n = \abs{G/N} \ell$ and $\ell \in \mathbb{Z}_{> 1}$ is minimal such that $\mathbb{Q}(\phi) \leq \cycl{\ell}$. It follows that the action on $\irr(\Gamma \smid \lambda)$ of a field automorphism $\sigma \in \gal(\cycl{\abs{G}} \mid \mathbb{Q})$, fixing $\phi$, is well defined.

We may also notice that the condition on character field of values of \cite[Theorem~3.5]{Turull:Clifford_theory} now is always verified when $\phi$ is rational-valued. However, we can find a character triple isomorphism which preserves character fields of values even when $\mathbb{Q}(\phi) \neq \mathbb{Q}(\lambda)$, as we will see in Theorem~\ref{theorem:field_in_triples}.

We first need some preliminary results, which are actually nothing more than a ``field preserving" version of some lemmas from \cite[Chapter~11]{Isaacs}.

\begin{lemma}
\label{lemma:epimorphism}
Let $(G,N,\phi)$ be a character triple and let $\mu: G \mapsto \Gamma$ be an onto homomorphism with $\ker(\mu) \leq \ker(\phi)$. Let $M=\mu(N)$, notice that $M \cong N / \ker(\mu)$, and let $\theta$ be the character corresponding to $\phi \in \irr(N/ \ker(\mu))$. Then $(G,N,\phi)$ and $(\Gamma, M, \theta)$ are isomorphic character triples. Moreover, $\mathbb{Q}(\phi) = \mathbb{Q}(\theta)$ and the isomorphism of character triples $\tau$ can be chosen such that it commutes with any $\sigma \in \gal(\cycl{\abs{G}} \mid \mathbb{Q})$ fixing $\phi$.
\end{lemma}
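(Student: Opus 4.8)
The plan is to construct the isomorphism of character triples explicitly from the epimorphism $\mu$ and then verify the extra ``field preserving'' property by direct inspection. Recall that an isomorphism of character triples $(G,N,\phi) \to (\Gamma,M,\theta)$ consists of a group isomorphism $\Gamma/M \cong G/N$ (here the identity, since $\mu$ induces $G/N \cong \Gamma/M$ because $\ker(\mu) \le \ker(\phi) \le N$) together with, for each subgroup $G/N \le U/N$, bijections $\irr(U \mid \phi) \to \irr(U^* \mid \theta)$ (where $U^*$ is the corresponding subgroup of $\Gamma$) that are compatible with restriction, multiplication by linear characters of $U/N$, and preserve ratios of character values on appropriate elements. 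First I would set up the correspondence: since $\ker(\mu) \le \ker(\phi)$, every $\chi \in \irr(U \mid \phi)$ has $\ker(\mu) \le \ker(\chi)$, so $\chi$ descends to a character $\chi^* \in \irr(U/\ker(\mu))$; identifying $U/\ker(\mu)$ with the corresponding subgroup $U^*$ of $\Gamma = G/\ker(\mu)$, this gives the required bijection $\irr(U\mid\phi) \to \irr(U^* \mid \theta)$, with inverse given by inflation.

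Next I would check that this family of bijections genuinely is a character triple isomorphism. This is essentially routine: inflation and deflation along $\ker(\mu)$ commute with restriction to subgroups, send products $\chi\beta$ (for $\beta \in \irr(U/N)$) to the corresponding products, and — crucially for the definition in \cite[Chapter~11]{Isaacs} — for $g \in U$ with $gN \in G/N$ of appropriate type, the value $\chi(g)/\chi(1)$ equals $\chi^*(g\ker(\mu))/\chi^*(1)$ since $\chi$ and $\chi^*$ take literally the same values under the identification. Actually the cleanest route is to note that this is exactly the situation of \cite[Lemma~11.26 / Problem~11.13]{Isaacs} (character triples and factoring out a normal subgroup contained in the kernel), so I would cite that to get the triple isomorphism $\tau$ ``for free'' rather than re-verifying the axioms. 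The statement $\mathbb{Q}(\phi) = \mathbb{Q}(\theta)$ is immediate because $\theta$ is defined to be the character of $N/\ker(\mu)$ corresponding to $\phi$, so $\theta$ and $\phi$ take the same set of values.

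The only genuinely new content is the last clause: that $\tau$ can be chosen to commute with every $\sigma \in \gal(\cycl{\abs{G}}\mid\mathbb{Q})$ fixing $\phi$. Here I would argue as follows. The Galois group acts on $\irr(U \mid \phi)$ (whenever $\sigma$ fixes $\phi$, the constituents of an induced/restricted character built from $\phi$ are permuted) and likewise on $\irr(U^* \mid \theta)$ (note $\sigma$ fixes $\theta$ too, since $\mathbb{Q}(\theta) = \mathbb{Q}(\phi)$); and the key point is that both actions are ``take $\sigma$ of every value.'' Since the bijection $\chi \mapsto \chi^*$ is value-preserving under the identification of $U/\ker(\mu)$ with $U^*$ — i.e. $\chi(g) = \chi^*(g\ker(\mu))$ for all $g \in U$ — we get $(\chi^\sigma)^* = (\chi^*)^\sigma$ directly: both send $g\ker(\mu)$ to $\chi(g)^\sigma$. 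Because this particular realization of $\tau$ (inflation/deflation, with no auxiliary choices of projective representation or scaling) is canonical, no modification is needed and $\tau$ already commutes with all such $\sigma$.

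I expect the main obstacle to be bookkeeping rather than mathematics: making precise what ``the isomorphism of character triples $\tau$'' refers to as a single object (a compatible family of bijections, versus the associated projective-representation data) and confirming that the ``commutes with $\sigma$'' assertion is interpreted at the level of those bijections on $\irr(U\mid\phi)$ for all intermediate $U$. Once one fixes the convention that $\tau$ is the inflation/deflation family above, every verification reduces to the single equality $\chi(g) = \chi^*(g\ker(\mu))$, from which commutation with restriction, with multiplication by linear characters of $G/N$, with taking determinants/orders, and with Galois action all follow simultaneously. So the write-up will be short: cite \cite[Problem~11.13]{Isaacs} (or the relevant lemma) for the triple isomorphism, observe $\mathbb{Q}(\phi)=\mathbb{Q}(\theta)$ by construction, and give the one-line value-preservation argument for Galois compatibility.
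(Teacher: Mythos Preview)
Your proposal is correct and takes essentially the same approach as the paper: the paper's proof consists of a single sentence citing the isomorphism constructed in \cite[Lemma~11.26]{Isaacs} and observing that, by its definition (inflation/deflation along $\ker(\mu)$), it commutes with field automorphisms. Your write-up simply unpacks what that sentence means, and your one-line value-preservation argument $(\chi^{\sigma})^* = (\chi^*)^{\sigma}$ is exactly the observation the paper leaves implicit.
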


\begin{proof}
Let $\tau : (G,N,\phi) \mapsto (\Gamma, M, \theta)$ be the character triples isomorphism constructed in the proof of \cite[Lemma~11.26]{Isaacs} and notice that, for how it is defined, it commutes with field automorphisms.
\end{proof}

\begin{lemma}
\label{lemma:restriction_isomorphism}
Let $(G,N,\phi)$ be a character triple and let $K \leq \cycl{\abs{G}}$ such that $\mathbb{Q}(\phi) \leq K$. Suppose there exists $\eta \in \irr(G)$ having values in $K$ such that $\eta_N \phi = \theta \in \irr(N)$. Define $\tau: (G,N,\phi) \mapsto (G,N,\theta)$ such that it is the identity map on $G/N$ and such that, for every $N \leq H \leq G$ and every $\psi \in \op{Ch}(H \smid \phi)$, $\tau(\psi)=\psi \eta_H$. Then, $\tau$ is an isomorphism of character triples which commutes with every $\sigma \in \gal( \cycl{ \abs{G} } \mid K )$.
\end{lemma}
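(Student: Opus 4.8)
The plan is to take $\tau$ exactly as defined in the statement and to do two things: first, check that it is a genuine isomorphism of character triples (this is the familiar ``tensoring'' construction, a projective-representation version of the fact that multiplication by a fixed irreducible character interacts well with Clifford theory), and second, observe that it commutes with $\gal(\cycl{\abs{G}}\mid K)$, which is the only new content and is essentially immediate from the hypothesis that $\eta$ has values in $K$. A preliminary bookkeeping step is to note that $\eta_N\in\irr(N)$: writing $\phi\overline{\phi}=1_N+\Delta$ with $\Delta$ a (possibly zero) character of $N$, one gets $1=[\theta,\theta]_N=[\eta_N\phi\overline{\phi},\eta_N]_N=[\eta_N,\eta_N]_N+[\eta_N\Delta,\eta_N]_N\ge[\eta_N,\eta_N]_N\ge 1$, so $\eta_N$ is irreducible; hence so is $\eta_H$ for every $N\le H\le G$, since $(\eta_H)_N=\eta_N$ is irreducible. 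Also $\theta=\eta_N\phi$ is automatically $G$-invariant, because $\eta_N$ is ($\eta$ being a class function on $G$).

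To verify that $\tau$ is a character-triple isomorphism I would follow the projective-representation machinery of \cite[Chapter~11]{Isaacs}. Fix an ordinary representation $\mathfrak{Z}$ of $G$ affording $\eta$ and a projective representation $\mathfrak{X}$ of $G$ associated with $\phi$ as in \cite[Theorem~11.2]{Isaacs}, with factor set $\alpha$. Tensoring by the ordinary representation $\mathfrak{Z}$ does not change the factor set, so $\mathfrak{Z}\otimes\mathfrak{X}$ is a projective representation of $G$ associated with $\theta$ having the \emph{same} factor set $\alpha$. Consequently, over each intermediate subgroup $H$, the projective Clifford correspondences attached to the $G$-invariant characters $\phi$ and $\theta$ are parametrized by the \emph{same} projective representations of $H/N$, and the bijection $\irr(H\smid\phi)\to\irr(H\smid\theta)$ they induce is exactly $\psi\mapsto\psi\,\eta_H$: if $\psi$ is afforded by the product of $\mathfrak{X}_H$ with a projective representation of $H/N$, the corresponding character over $\theta$ is afforded by the product of $(\mathfrak{Z}\otimes\mathfrak{X})_H=\mathfrak{Z}_H\otimes\mathfrak{X}_H$ with the same projective representation, whose character is $\eta_H\psi$. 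Compatibility of $\tau$ with restriction and with multiplication by characters of $H/N$ is clear, and compatibility with induction follows from the projection formula $(\psi\cdot(\eta_H)_{H'})^{H}=\psi^{H}\cdot\eta_H$ for $N\le H'\le H$; for the routine verification of the remaining axioms in the definition of a character-triple isomorphism I would simply appeal to \cite[Chapter~11]{Isaacs}. Setting up this correspondence cleanly is where essentially all of the real work of the lemma sits.

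Finally, the Galois statement. For $\sigma\in\gal(\cycl{\abs{G}}\mid K)$ we have $\phi^{\sigma}=\phi$ because $\mathbb{Q}(\phi)\le K$, so $\sigma$ permutes $\irr(H\smid\phi)$; and $\eta^{\sigma}=\eta$ because $\eta$ has values in $K$, so $\theta^{\sigma}=\theta$ and $\sigma$ permutes $\irr(H\smid\theta)$ as well. Then for $\psi\in\irr(H\smid\phi)$,
\[
\tau(\psi)^{\sigma}=(\psi\,\eta_H)^{\sigma}=\psi^{\sigma}\,\eta_H^{\sigma}=\psi^{\sigma}\,\eta_H=\tau(\psi^{\sigma}),
\]
and this identity extends $\mathbb{Z}$-linearly to all of $\op{Ch}(H\smid\phi)$; hence $\tau$ commutes with every such $\sigma$. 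The whole point of requiring that $\eta$ have values in $K$ is precisely that $\tau$ is then multiplication by a $\sigma$-fixed character, which makes commutation with $\gal(\cycl{\abs{G}}\mid K)$ automatic once $\tau$ is known to be an isomorphism of character triples.
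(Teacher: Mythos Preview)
Your proof is correct and follows the same approach as the paper: the paper simply cites \cite[Lemma~11.27]{Isaacs} for the fact that $\tau$ is an isomorphism of character triples (whereas you essentially reprove that lemma via projective representations, together with the preliminary observation that $\eta_N$ is irreducible), and then performs exactly the same one-line Galois computation $\tau(\psi^{\sigma})=\psi^{\sigma}\eta_H=(\psi\eta_H)^{\sigma}=\tau(\psi)^{\sigma}$ using $\eta^{\sigma}=\eta$.
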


\begin{proof}
From \cite[Lemma~11.27]{Isaacs}, we know that $\tau$ is an isomorphism of character triples. Let $\psi \in \op{Ch}(H \smid \phi)$ and let $\sigma \in \gal( \cycl{ \abs{G} } \mid K )$, then $\tau(\psi^{\sigma}) = \psi^{\sigma} \eta_H = (\psi \eta_H)^{\sigma} = (\tau(\psi))^{\sigma}$, because $\eta$ is $\sigma$-invariant.
\end{proof}

\begin{theorem}
\label{theorem:field_in_triples}
Let $(G,N,\phi)$ be a character triple, and let $(\Gamma,A,\lambda)$ be a canonically constructed character triple isomorphic to $(G,N,\phi)$. If $G/N$ is perfect, then the isomorphism of character triples commutes with every $\sigma \in \gal( \cycl{ \abs{G} } \mid \mathbb{Q} )$ fixing $\phi$.
\end{theorem}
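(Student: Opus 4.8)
The plan is to reduce Theorem~\ref{theorem:field_in_triples} to Theorem~\ref{theorem:character_triple} together with the two ``field preserving'' lemmas just established. Recall that a canonically constructed character triple isomorphism $(G,N,\phi)\to(\Gamma,A,\lambda)$ is built in several stages in the proof of \cite[Theorem~11.28]{Isaacs}: first one passes from $(G,N,\phi)$ to a triple in which the module has been collapsed (an application of Lemma~\ref{lemma:epimorphism}), then one uses the projective representation $\mathfrak{X}$ associated with $\phi$ to build the representation group $\Gamma$ of $G/N$ and the linear character $\lambda$ of $A=\ker(\pi)$, and finally one may twist by a linear character of $G$ to normalize things (an application of Lemma~\ref{lemma:restriction_isomorphism}). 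The point of the hypothesis that $G/N$ is perfect is that it kills the ambiguity coming from $\lin(G/N)$ — equivalently, from $B(G,\mathbb{C}^\times)$ intersected with the genuine (non-projective) one-dimensional characters — so that the maps in the construction become rigid enough to commute with Galois action on the nose, not merely up to a linear character.

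First I would fix $\sigma\in\gal(\cycl{\abs G}\mid\mathbb Q)$ with $\phi^\sigma=\phi$ and, writing $T$ for the composite isomorphism of character triples, I would check that $T^\sigma$ (the triple isomorphism obtained by conjugating $T$ by $\sigma$, i.e. $\psi\mapsto ((T(\psi^{\sigma^{-1}}))^\sigma)$) is again a triple isomorphism $(G,N,\phi)\to(\Gamma,A,\lambda)$: this uses $\phi^\sigma=\phi$, that $\lambda^\sigma=\lambda$ by Theorem~\ref{theorem:character_triple}, and that the action of $\sigma$ on $\irr(\Gamma\mid\lambda)$ is well defined (the remark following Corollary~\ref{corollary:character_triple}). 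So both $T$ and $T^\sigma$ are triple isomorphisms with the same source and target. Then I would invoke the uniqueness statement for triple isomorphisms over a perfect quotient: two isomorphisms $(G,N,\phi)\to(\Gamma,A,\lambda)$ differ by ``multiplication by a linear character of $G/N$'' in the sense of \cite[Chapter~11]{Isaacs}, but when $G/N$ is perfect the only such linear character is trivial, forcing $T=T^\sigma$, which is exactly the assertion that $T$ commutes with $\sigma$.

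Alternatively — and this is probably cleaner to write out — I would trace the field-of-values behaviour through each of the three stages directly. The epimorphism stage is handled verbatim by Lemma~\ref{lemma:epimorphism}, which already gives a triple isomorphism commuting with every $\sigma$ fixing $\phi$. For the central stage, the proof of Theorem~\ref{theorem:character_triple} already shows $\overline\beta^\sigma=\overline\beta$ and $\lambda^\sigma=\lambda$; one then observes that the explicit triple isomorphism in \cite[Theorem~11.28]{Isaacs}, sending a character of an intermediate subgroup built from $\mathfrak X$ and $\mathfrak Y$ to the corresponding character built from the projective representation of $\Gamma/A$, is intertwined with $\sigma$ precisely because $\mathfrak Y^\sigma=P^{-1}\mathfrak Y P$ and the induced change $\mathfrak X^\sigma=P^{-1}\mathfrak X P\mu$ has $\mu$ constant on cosets of $N$ — and when $G/N$ is perfect the coboundary $\nu$ obtained from $\mu$ forces $\mu$ itself to be trivial on $G$ (since $\mu$ descends to a linear character of the perfect group $G/N$), so $\mathfrak X^\sigma=P^{-1}\mathfrak X P$ exactly. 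This exactness, rather than mere equality in cohomology, is what upgrades $\sigma$-invariance of $\overline\beta$ to $\sigma$-equivariance of the triple isomorphism. Finally the twisting stage, if present, is covered by Lemma~\ref{lemma:restriction_isomorphism} (here one uses that the twisting linear character of $G$ can be taken rational, or at worst $\sigma$-fixed, again because $G/N$ perfect removes the freedom to adjust it).

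The main obstacle is the middle step: the isomorphism $\eta:\irr(A)\to H(G/N,\mathbb C^\times)$ and the associated triple isomorphism are only canonical up to the kernel of restriction to genuine characters, and one must pin down exactly where the perfectness of $G/N$ enters to remove this ambiguity. Concretely, I expect the delicate point to be verifying that the function $\mu:G\to\mathbb C^\times$ arising from $\mathfrak X^\sigma=P^{-1}\mathfrak X P\mu$ is not merely a class function modulo $B(G,\mathbb C^\times)$ but can be chosen identically $1$ — equivalently, that the linear character of $G/N$ it induces is trivial — and then propagating this exactness through the identification of $\op{Ch}(H\mid\phi)$ with $\op{Ch}(H^\ast\mid\lambda)$ for each intermediate $H$. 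Once that is in hand, the commutation with $\sigma$ is a formal consequence of the definitions, as in the last display of the proof of Theorem~\ref{theorem:character_triple}.
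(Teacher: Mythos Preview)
Your second approach contains a genuine error. The function $\mu$ arising from $\mathfrak X^\sigma=P^{-1}\mathfrak X P\mu$ is constant on cosets of $N$, so it descends to a function $\bar\mu:G/N\to\mathbb C^\times$, but it is \emph{not} a homomorphism: the coboundary relation $\alpha^\sigma=\alpha\nu$ with $\nu(g,h)=\mu(g)\mu(h)\mu(gh)^{-1}$ says precisely that $\bar\mu$ fails to be multiplicative by the amount $\nu$. Perfectness of $G/N$ kills $\operatorname{Hom}(G/N,\mathbb C^\times)$, but $\bar\mu$ is not in that group, so you cannot conclude $\mu=1$ or $\mathfrak X^\sigma=P^{-1}\mathfrak X P$ on the nose. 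This is exactly the gap between equality in $H^2$ (which Theorem~\ref{theorem:character_triple} gives) and the pointwise equality you want.

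Your first approach is closer to what the paper does, but the uniqueness statement you invoke --- that two character-triple isomorphisms $(G,N,\phi)\to(\Gamma,A,\lambda)$ over the same group isomorphism differ by multiplication by a linear character of $G/N$ --- is not stated anywhere in \cite[Chapter~11]{Isaacs} and is not obvious for arbitrary triple isomorphisms. The paper makes this rigorous by passing to the fibre product $G^*=\{(g,x)\in G\times\Gamma:\bar g=\pi(x)\}$ with $L=N\times A$, so that both $(G,N,\phi)$ and $(\Gamma,A,\lambda)$ are linked to $(G^*,L,\phi^*)$ and $(G^*,L,\lambda^*)$ via the epimorphism lemma. The remaining link $(G^*,L,\phi^*)\to(G^*,L,\lambda^*)$ is, by construction, given by multiplication by a fixed extension $\psi\in\irr(G^*)$ of $\phi^*(\lambda^*)^{-1}$ (this is Lemma~\ref{lemma:restriction_isomorphism}). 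Now perfectness enters exactly once, and cleanly: since $G^*/L\cong G/N$ is perfect, Gallagher's theorem forces $\psi$ to be the \emph{unique} extension of $\phi^*(\lambda^*)^{-1}$; hence $\psi^\sigma=\psi$ for every $\sigma$ fixing $\phi$ (using $\mathbb Q(\lambda)\subseteq\mathbb Q(\phi)$ from Theorem~\ref{theorem:character_triple}), so $\psi$ has values in $\mathbb Q(\phi)$, and Lemma~\ref{lemma:restriction_isomorphism} then gives the $\sigma$-commuting isomorphism directly.

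In short: the place where perfectness is used is not to trivialise the cochain $\mu$, but to pin down the extension $\psi$ uniquely via Gallagher, which immediately makes it Galois-stable. Your first outline becomes a proof once you replace the unproved uniqueness-of-triple-isomorphisms claim by this concrete uniqueness-of-extension statement inside the $G^*$ construction.
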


We mention that the first published version of this theorem contained a mistake. We thank a reviewer of the previous version for having noticed it.

\begin{proof}
We prove that the isomorphism of character triples constructed in \cite[Theorem~11.28]{Isaacs} commutes with every $\sigma \in \gal( \cycl{ \abs{G} } \mid \mathbb{Q} )$ fixing $\phi$. We actually need to repeat most of the steps of the proof of \cite[Theorem~11.28]{Isaacs}.

As in \cite[Theorem~11.28]{Isaacs}, we define $G^* \leq G \times \Gamma$ as $G^* = \{(g,x) \mid \overline{g}=\pi(x)\}$, where $\overline{g}$ is the image of $g$ in $G/N$ and $\pi$ is the homomorphism of $\Gamma$ onto $G$, and we take $L = N \times A \lhd G^*$. We also define $\phi^*$ and $\lambda^*$ on $L$ by $\phi^*(n,a) = \phi(n)$ and $\lambda^*(n,a)=\lambda(a)$ and we notice that $\phi^*, \lambda^* \in \irr(L)$ and they are invariant in $G^*$. Moreover, we can see from \cite[Problem~10.10]{Isaacs} that $m_{\mathbb{Q}}(\phi^*) = m_{\mathbb{Q}}(\phi)$.

Let $\mu_G$ and $\mu_{\Gamma}$ be the projection homomorphism of $G^*$ onto, respectively, $G$ and $\Gamma$; then $\ker(\mu_G) = 1 \times A \leq \ker(\phi^*)$ and $\ker(\mu_{\Gamma}) = N \times 1 \leq \ker(\lambda^*)$ and it follows from Lemma~\ref{lemma:epimorphism} that there exist character triples isomorphisms between $(G^*,L,\phi^*)$ and $(G,N,\phi)$, and between $(G^*,L,\lambda^*)$ and $(\Gamma,A,\lambda)$, which commute with every $\sigma \in \gal( \cycl{ \abs{G} } \mid \mathbb{Q} )$ fixing $\phi$. We only need to prove that such isomorphism exists also between $(G^*,L,\lambda^*)$ and $(G^*,L,\phi^*)$ and, by Lemma~\ref{lemma:restriction_isomorphism}, we are done when we prove that the character $\phi^*(\lambda^*)^{-1}$ has an extension to $G^*$ having values in $\mathbb{Q}(\phi)$.

Let $\mathfrak{Y}$ be an irreducible $\mathbb{C}$-representation affording $\phi$, and let $\mathfrak{X}$ be a projective $\mathbb{C}$-representation associated with $\mathfrak{Y}$ as in \cite[Theorem~11.2]{Isaacs}. Let $\alpha \in Z(G,\mathbb{C}^{\times})$ be the associated factor set and let $\beta$ be the corresponding factor set of $G/N$ as in \cite[Theorem~11.7]{Isaacs}. Let $\{ x_{\overline{g}} \mid \overline{g} \in G/N \}$ be a set of coset representatives for $A$ in $\Gamma$, such that $\pi(x_{\overline{g}})=\overline{g}$ and $x_{\overline{1}} = 1$, and let $\delta \in Z(G/N,A)$ such that $x_{\overline{g}}x_{\overline{h}} = \delta(\overline{g},\overline{h}) x_{\overline{g}\overline{h}}$, as in \cite[Lemma~11.9]{Isaacs}. Since $\eta(\lambda)={\overline{\beta}}^{-1}$, we have that $\lambda(\delta)\beta \in B(G/N,\mathbb{C}^{\times})$ and hence
$$ \lambda(\delta(\overline{g},\overline{h}))\alpha(g,h) = \nu(\overline{g})^{-1}\nu(\overline{h})^{-1} \nu(\overline{g}\overline{h}) $$
for some function $\nu: G/N \mapsto \mathbb{C}^{\times}$.

Define $\mathfrak{Z}$ on $G^*$ by
$$ \mathfrak{Z}(g,ax_{\overline{g}}) = \mathfrak{X}(G) \lambda(a)^{-1} \nu(\overline{g}). $$
By proceeding as in \cite[Theorem~11.28]{Isaacs} we have that $\mathfrak{Z}$ is a representation of $G^*$ and, if $\psi$ is the character affording $\mathfrak{Z}$, we have that $\psi_L = \phi^*(\lambda^*)^{-1}$.

Since $G^*/L \cong G/N$ is perfect, it follows that $\psi$ is the only extension of $\phi^*(\lambda^*)^{-1}$ to $G^*$, which has values in $\mathbb{Q}(\phi)$ because it is fixed by every $\sigma \in \gal( \cycl{ \abs{G} } \mid \mathbb{Q} )$ fixing $\phi^*(\lambda^*)^{-1}$, and $\mathbb{Q}(\phi^*(\lambda^*)^{-1}) = \mathbb{Q}(\phi)$. The proof is now complete.
\end{proof}

We conclude this section proving a result which we will need for dealing with Schur representation groups for direct products of perfect groups. The following result is a corollary of \cite[V.25.10~Sats]{Huppert:Endliche_Gruppen} and it is probably already known.

\begin{corollary}
\label{corollary:Schur_direct_product}
Let $G$ be a finite group such that $G \cong G_1 \times ... \times G_k$, for some perfect groups $G_i$, and for each $1 \leq i \leq k$ let $\Gamma_i$ be a Schur representation group for $G_i$. Then, $\Gamma = \Gamma_i \times ... \times \Gamma_k$ is a Schur representation group for $G$.
\end{corollary}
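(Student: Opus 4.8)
The plan is to turn the statement into an elementary order count, using that for a \emph{perfect} group the notion of Schur representation group coincides with that of universal central extension. To set up, I would first record what the hypotheses give for each $i$: the projection $\pi_i\colon \Gamma_i \to G_i$ is a central epimorphism with $\ker(\pi_i) \le \ze{\Gamma_i} \cap \Gamma_i'$ and with the standard map $\irr(\ker(\pi_i)) \to M(G_i)$ an isomorphism, so that $\abs{\ker(\pi_i)} = \abs{M(G_i)}$. Since $G_i$ is perfect we have $\pi_i(\Gamma_i') = G_i' = G_i$, hence $\Gamma_i = \Gamma_i'\ker(\pi_i)$, and combined with $\ker(\pi_i) \le \Gamma_i'$ this forces $\Gamma_i' = \Gamma_i$; thus each $\Gamma_i$ is perfect.

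Next I would form $\pi = \pi_1 \times \cdots \times \pi_k \colon \Gamma \to G$. Its kernel is $\ker(\pi_1) \times \cdots \times \ker(\pi_k)$, which is central in $\Gamma$, and $\Gamma = \Gamma_1 \times \cdots \times \Gamma_k$ is perfect (a product of perfect groups), so $\ker(\pi) \le \Gamma' \cap \ze{\Gamma}$. Hence $(\Gamma,\pi)$ is a central stem extension of $G$, and it remains only to check that $\abs{\ker(\pi)} = \abs{M(G)}$: indeed, for a central stem extension of $G$ the standard map $\irr(\ker(\pi)) \to M(G)$ is injective (this is part of the discussion around \cite[Theorem~11.17]{Isaacs}), so if the two sides have the same finite cardinality it is an isomorphism, which is exactly the condition for $\Gamma$ to be a Schur representation group for $G$.

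The cardinality computation is where the cited result enters. By \cite[V.25.10~Sats]{Huppert:Endliche_Gruppen} one has
$$ M(G_1 \times \cdots \times G_k) \;\cong\; \prod_{i=1}^{k} M(G_i) \;\times\; \prod_{1 \le i < j \le k} \big( G_i/G_i' \otimes_{\mathbb{Z}} G_j/G_j' \big), $$
and since each $G_i$ is perfect the factors $G_i/G_i'$ are trivial, so every tensor term vanishes and $M(G) \cong \prod_{i=1}^{k} M(G_i)$. Therefore $\abs{\ker(\pi)} = \prod_{i=1}^{k} \abs{M(G_i)} = \abs{M(G)}$, and by the previous paragraph $\Gamma$ is a Schur representation group for $G$.

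I do not expect a genuine obstacle: the points needing care are merely (i) the trivial verifications that $\Gamma$ is perfect and that $\ker(\pi)$ lies in $\Gamma' \cap \ze{\Gamma}$, and (ii) matching the paper's definition of ``Schur representation group'' (projective lifting property with the standard map an isomorphism, equivalently minimal order) with the ``central stem extension realizing the full multiplier'' description, which is exactly what \cite[Chapter~11]{Isaacs} supplies. As an alternative one can bypass Huppert altogether and show directly that $\Gamma$ is the universal central extension of $G$: given any central extension $E \to G$, the universal property of each $\Gamma_i$ lifts the inclusion $G_i \hookrightarrow G$ to a map $\Gamma_i \to E$, the images of the $\Gamma_i$ commute in $E$ because a commutator pairing into a central subgroup factors through $\Gamma_i/\Gamma_i' = 1$, and the resulting homomorphism $\Gamma \to E$ is the unique lift of the identity. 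I would present the order-counting version, since the paper already has the needed machinery from \cite[Chapter~11]{Isaacs} in place and explicitly points to Huppert.
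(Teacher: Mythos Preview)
Your proof is correct and follows essentially the same route as the paper: show that the product $\Gamma$ is a central extension of $G$ with kernel contained in $\Gamma'$, so that the standard map into $M(G)$ is injective (the paper invokes \cite[Theorem~11.19]{Isaacs} here), and then finish by the order count $\abs{\ker(\pi)}=\prod_i\abs{M(G_i)}=\abs{M(G)}$ via \cite[V.25.10~Sats]{Huppert:Endliche_Gruppen}. The only cosmetic differences are that the paper writes $A_i=\ze{\Gamma_i}$ rather than $\ker(\pi_i)$ and states Huppert's result directly in the form $M(G)\cong\prod_i M(G_i)$ without displaying the vanishing tensor terms; your version is arguably a touch more careful on both points.
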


\begin{proof}
For each $i$, let $A_i = \ze{\Gamma_i}$, and let $A=A_i \times ... \times A_k = \ze{\Gamma}$; then $\Gamma/A \cong G_1 \times ... \times G_k$ and we have that $\Gamma$ is a central extension for $G$. Since, for every $i$, we have that $A_i < {\Gamma_i}' \leq \Gamma'$, then $A < \Gamma'$ and, by \cite[Theorem~11.19]{Isaacs}, the standard map $\eta: \irr(A) \mapsto M(G)$ is one-to-one. Thus, in order to prove that $\eta$ is an isomorphism, we only need to prove that $\abs{A}=\abs{M(G)}$.

Since, for every $1 \leq i \leq k$, $\Gamma_i$ is a Schur representation group for $G_i$, we have that $\abs{A_i}=\abs{M(G_i)}$. On the other hand, since each $G_i$ is perfect, it follows from \cite[V.25.10~Sats]{Huppert:Endliche_Gruppen} that
$$ M(G) \cong M(G_1) \times ... \times M(G_k) $$
and, thus, $\abs{A}=\abs{M(G)}$ and the thesis follows.
\end{proof}

\section{Non-abelian sections}
\label{section:Non-abelian_sections}

In this section we deal with non-abelian chief factors of a group $G$. As we will see in Section~\ref{section:main}, we only need to study the case when the chef factor is isomorphic to the direct product of some projective special linear groups.

\begin{lemma}
\label{lemma:characters_of_SL}
Let $G \cong \op{SL}(2,p^a)$ for some prime $p$ and some integer $a$ such that $p^a \equiv 3 \; (\op{mod} \: 8)$, and let $r \neq p$ be a prime number. Then there exists a unique faithful character $\chi \in \irr(G)$ of degree $\chi(1)=p^a - 1$ and of values in $\cycl{r}$. In particular, $\chi$ is rational-valued.

Moreover, such character $\chi$ is of minimal degree among the faithful characters having values in $\cycl{r}$.
\end{lemma}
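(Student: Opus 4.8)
The plan is to read everything off the classical character table of $G = \op{SL}(2,q)$ with $q = p^a$. First I would record the arithmetic consequences of $q \equiv 3 \pmod 8$: since an odd square is $\equiv 1 \pmod 8$, the exponent $a$ is odd, hence $q \equiv 3 \pmod 4$, $p \equiv 3 \pmod 4$, the squarefree part of $q$ is $p$ (so $\mathbb{Q}(\sqrt{-q}) = \mathbb{Q}(\sqrt{-p})$), and the $2$-part of $q+1$ equals exactly $4$ (as $q+1 \equiv 4 \pmod 8$). Recall $\ze{G} = \{\pm I\}$ has order $2$ and that $\chi \in \irr(G)$ is faithful precisely when $\chi(-I) = -\chi(1)$. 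Running through the irreducible characters of $\op{SL}(2,q)$ — the trivial and Steinberg characters (degrees $1$ and $q$); the principal series $\chi_\alpha$ of degree $q+1$, indexed up to inversion by the characters $\alpha$ of the split torus $C_{q-1}$ with $\alpha^2 \neq 1$; the discrete series $\chi_\theta$ of degree $q-1$, indexed up to inversion by the characters $\theta$ of the non-split torus $C_{q+1}$ with $\theta^2 \neq 1$; and the four exceptional characters of degrees $(q\pm 1)/2$ arising from the order-$2$ characters of the two tori — and evaluating at $-I$, I would check that, for $q \equiv 3 \pmod 4$, the faithful irreducible characters are exactly: the two exceptional ones of degree $(q+1)/2$; the $\chi_\alpha$ with $\alpha(-I) = -1$, of degree $q+1$; and the $\chi_\theta$ with $\theta(-I) = -1$, of degree $q-1$. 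Since the $2$-part of $q+1$ is $4$, the last condition is equivalent to $4 \mid o(\theta)$. In particular no faithful character has degree $< (q+1)/2$, and none has degree strictly between $(q+1)/2$ and $q-1$.

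Next I would compute the relevant fields of values. For the two exceptional characters of degree $(q+1)/2$: from the character table, their values on the two classes of regular unipotent elements are $\tfrac12(1 \pm \sqrt{\epsilon q})$ with $\epsilon = (-1)^{(q-1)/2} = -1$, so $\mathbb{Q}(\chi) = \mathbb{Q}(\sqrt{-q}) = \mathbb{Q}(\sqrt{-p})$. I would then note that $\mathbb{Q}(\sqrt{-p}) \not\subseteq \cycl{r}$ for any prime $r \neq p$: if $r = 2$ then $\cycl{2} = \mathbb{Q}$; if $r$ is odd then the unique quadratic subfield of $\cycl{r}$ is $\mathbb{Q}(\sqrt{r^{*}})$ with $r^{*} = (-1)^{(r-1)/2}r$, ramified only at $r$, whereas $\mathbb{Q}(\sqrt{-p})$ has discriminant $-p$ (because $p \equiv 3 \pmod 4$) and is ramified only at $p$, so equality would force $p = r$. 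Thus the exceptional characters never have values in $\cycl r$; combined with the first paragraph, this already yields the minimality statement once the existence of a suitable character of degree $q-1$ is in hand.

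For the discrete series $\chi_\theta$ of degree $q-1$ I would use that $\chi_\theta$ is real-valued with $\mathbb{Q}(\chi_\theta) = \mathbb{Q}(\zeta_m + \zeta_m^{-1})$, the maximal real subfield of $\mathbb{Q}(\zeta_m)$, where $m = o(\theta)$: indeed the only non-rational values of $\chi_\theta$ occur on the non-split torus and equal $-(\theta(x) + \theta(x)^{-1})$. For a faithful such character we have $4 \mid m$ and, since $8 \nmid q+1$, also $8 \nmid m$, so $m = 4m'$ with $m'$ odd. If $m = 4$, then $\mathbb{Q}(\chi_\theta) = \mathbb{Q}(\zeta_4 + \zeta_4^{-1}) = \mathbb{Q}$; such $\theta$ exists because $4 \mid q+1$, and it is unique up to inversion (the two characters of order $4$ of $C_{q+1}$ are mutually inverse), so it contributes a single faithful rational-valued character of degree $q-1$. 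If $m > 4$, hence $m' > 1$, then a short ramification computation shows $\mathbb{Q}(\zeta_m + \zeta_m^{-1})$ is ramified at $2$ (the inertia subgroup at $2$ in $\gal(\mathbb{Q}(\zeta_m) \mid \mathbb{Q})$ has order $2$ and, not being generated by complex conjugation since $m' > 1$, has nontrivial image in $\gal(\mathbb{Q}(\zeta_m + \zeta_m^{-1}) \mid \mathbb{Q})$), while $\cycl{r}$ is unramified at $2$ for $r$ odd and equals $\mathbb{Q}$ for $r = 2$; so such a $\chi_\theta$ has values in $\cycl r$ for no prime $r$. Therefore among the faithful characters of degree $q-1$ exactly one, call it $\chi$, has values in $\cycl r$, it is rational-valued, and by the second paragraph it has minimal degree among all faithful characters with values in $\cycl r$. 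This completes the argument.

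The main obstacle is the field-of-values bookkeeping: correctly identifying the exceptional characters' field of values as $\mathbb{Q}(\sqrt{-p})$ — note that the congruence $q \equiv 3 \pmod 8$, and not merely $q \equiv 3 \pmod 4$, is used here (if $q \equiv 7 \pmod 8$ the smallest candidate $m$ is $8$ and one gets $\mathbb{Q}(\sqrt{2})$ rather than $\mathbb{Q}$) — and, above all, the number-theoretic non-containments $\mathbb{Q}(\sqrt{-p}) \not\subseteq \cycl r$ and $\mathbb{Q}(\zeta_m + \zeta_m^{-1}) \not\subseteq \cycl r$ for $m > 4$, the latter resting on ramification at $2$. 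Everything else is routine reading of the $\op{SL}(2,q)$ character table and of its behaviour under the center.
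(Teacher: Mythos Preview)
Your proof is correct and follows the same overall strategy as the paper: both read directly from the character table of $\SL(2,q)$ (the paper cites \cite[Theorem~38.1]{Dornhoff:Group_Representation_TheoryA}) and identify $\chi$ as the discrete-series character attached to a character of order~$4$ of the non-split torus $C_{q+1}$. The difference is in how the crucial non-containments of fields of values in $\cycl{r}$ are established. The paper invokes Niven's theorem to list the orders $n$ for which $\mathbb{Q}(\zeta_n+\zeta_n^{-1})\subseteq\cycl{r}$, and disposes of all characters of degree $<q-1$ in one line by noting they are non-rational with values in $\cycl{p}$, hence in $\cycl{p}\cap\cycl{r}=\mathbb{Q}$ only if rational. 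You instead argue by ramification: $\mathbb{Q}(\sqrt{-p})$ is ramified only at $p$, and for $m=4m'$ with odd $m'>1$ the inertia group at $2$ in $\gal(\cycl{m}/\mathbb{Q})$ is not generated by complex conjugation, so $2$ ramifies in $\mathbb{Q}(\zeta_m+\zeta_m^{-1})$, whereas $\cycl{r}$ is unramified away from $r$. Your argument is more self-contained at the number-theoretic step (strictly speaking, Niven's theorem only handles the case $\mathbb{Q}(\zeta_n+\zeta_n^{-1})=\mathbb{Q}$, so the paper's appeal to it for general $r$ is a little loose), while the paper's treatment of the small-degree characters via $\cycl{p}\cap\cycl{r}=\mathbb{Q}$ is slicker than your separate analysis of the exceptional characters of degree $(q+1)/2$.
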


\begin{proof}
We will prove the lemma using the results and the notation of \cite[Theorem~38.1]{Dornhoff:Group_Representation_TheoryA}. In particular, we take $\chi = \theta_k$, for an integer $k$ such that $p^a + 1 = 4k$. Notice that $k$ is well defined, since $p^a \equiv 3 \; (\op{mod} \: 8)$ and, thus, $4 \mid p^a + 1$.

We have by definition that $\chi(1)=p^a - 1$. Moreover, since $8 \nmid p^a + 1$, then $k$ is odd and we can see from \cite[Theorem~38.1]{Dornhoff:Group_Representation_TheoryA} that $\chi$ is faithful. Finally, $\chi$ has values in $\mathbb{Q}(\sigma^k + \sigma^{-k})$, for some primitive $(p^a + 1)$-th root of unity $\sigma$. However, since $p^a + 1 = 4k$ we have that $\sigma^k$ is a 4th root of unity and, thus, $\sigma^k + \sigma^{-k} \in \mathbb{Q}$ and $\chi$ is rational-valued.

We now prove the uniqueness of $\chi$ among the irreducible characters of $G$ of same degree. Let $\psi \in \irr(G)$ such that $\psi(1) = \chi(1)$. Then, by \cite[Theorem~38.1]{Dornhoff:Group_Representation_TheoryA}, $\psi = \theta_j$ for some $1 \leq j \leq (p^a - 1)/2$. By definition, $\psi$ has values in $\mathbb{Q}(\sigma^j + \sigma^{-j})$, and it follows from Niven's theorem that $\mathbb{Q}(\sigma^j + \sigma^{-j}) \leq \cycl{r}$ if and only if $j$ is a multiple of $h = (p^a + 1)/m$, for some $m \in \{4,6,r,2r\}$ dividing $p^a + 1$. However, $\ze{G} = \ker(\theta_j)$ if $j$ is even and, thus, if we require $\psi$ to be faithful, $h$ must be an odd integer. Since $4 \mid p^a + 1$, we must take $m = 4$ and we have that $h=k$. Thus, $j = lk$ for some odd integer $l$ and, since $j \leq (p^a - 1)/2$, we have that $l = 1$, $j = k$ and, therefore, $\psi = \chi$.

Finally, since by \cite[Theorem~38.1]{Dornhoff:Group_Representation_TheoryA} the only non-principal irreducible characters of $G$ having degree less then $p^a - 1$ are non-rational and have values in $\mathbb{Q}_{p}$, the lemma is proved.
\end{proof}

\begin{theorem}
\label{theorem:existing_over_PSL}
Let $G$ be a finite group and let $M,N \lhd G$, $N < M$. Suppose that $M/N = S_1 \times ... \times S_k$ and that, for each $1 \leq i \leq k$, $S_i \cong \op{PSL}(2,p^a)$, for some prime $p$ and some integer $a$ such that $p^a \equiv 3 \; (\op{mod} \: 8)$. Let $r \neq p$ be a prime number and let $\phi \in \irr_{p'}(N)$ be a $G$-invariant character having values in $\cycl{r}$, then there exists a $G$-invariant character $\theta \in \irr_{p'}(M)$ having values in $\cycl{r}$.
\end{theorem}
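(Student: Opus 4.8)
The plan is to reduce the problem to the central-extension machinery of Section~3 combined with the explicit character theory of $\SL(2,p^a)$ from Lemma~\ref{lemma:characters_of_SL}. First I would pass to a canonically constructed character triple: by Corollary~\ref{corollary:character_triple} applied to $(M,N,\phi)$, since $\phi$ is $G$-invariant and has values in $\cycl{r}$, we may replace $(M,N,\phi)$ by a strongly isomorphic triple $(\Gamma,A,\lambda)$ with $A\le\ze\Gamma$ and $\mathbb{Q}(\lambda)\subseteq\cycl{r}$; moreover, because $M/N\cong S_1\times\cdots\times S_k$ is perfect, Theorem~\ref{theorem:field_in_triples} guarantees that the triple isomorphism commutes with the relevant Galois automorphisms, so that producing a $\Gamma$-invariant $p'$-degree character of the preimage of $M/N$ with values in $\cycl{r}$ will yield, after transporting back, the desired $\theta\in\irr_{p'}(M)$ lying over $\phi$. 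Here $\Gamma/A\cong M/N$ is a direct product of copies of $\PSL(2,p^a)$, and by Corollary~\ref{corollary:Schur_direct_product} a Schur representation group for $M/N$ is a direct product of Schur representation groups of the individual factors $S_i\cong\PSL(2,p^a)$; since $p^a\equiv 3\pmod 8$ one checks that the Schur multiplier of $\PSL(2,p^a)$ is cyclic of order $2$ (the generic case, with no exceptional multiplier), so each factor lifts to $\SL(2,p^a)$.

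The heart of the argument is then local, at the level of a single $\SL(2,p^a)$. Lemma~\ref{lemma:characters_of_SL} supplies a unique faithful rational-valued $\chi_i\in\irr(\SL(2,p^a))$ of degree $p^a-1$, which is coprime to $p$ and has values in $\cycl{r}$ (indeed in $\mathbb{Q}$). Because $\chi_i$ is the \emph{unique} such character, it is invariant under $\Aut(\SL(2,p^a))$, hence under the action of $\Gamma$ (equivalently, of $G$) permuting and acting on the factors. Taking the external tensor product $\chi_1\times\cdots\times\chi_k$ of these across the $k$ factors of the Schur cover gives a character of degree $(p^a-1)^k$, still coprime to $p$, still rational-valued, and now invariant under the full wreath-type action of $G$ on $\Gamma/A$. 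This product restricts to $A$ as a multiple of a linear character $\mu\in\lin(A)$ which is the product of the central characters of the $\chi_i$; since each $\chi_i$ is faithful, $\mu$ has order $2$ on each cyclic factor of $A$, so $\mathbb{Q}(\mu)=\mathbb{Q}$.

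The final step is to match $\mu$ with the $\lambda$ coming from Corollary~\ref{corollary:character_triple}. In the generic case the Schur multiplier factors force $A\cong(\mathbb{Z}/2)^k$, and $\lambda$, having values in $\cycl{r}$ with $r\neq p$ and being a linear character of an elementary abelian $2$-group, must be rational, i.e.\ each of its components is either trivial or the unique nontrivial character of order $2$. If $\lambda=\mu$ we are done immediately; otherwise on the factors where $\lambda$ is trivial we instead use the trivial (or $\PSL$-inflated) character of $\SL(2,p^a)$ rather than $\chi_i$ — more precisely, we build the external product using $\chi_i$ exactly on those factors where $\lambda$ is nontrivial and a suitable $p'$-degree, $\cycl{r}$-valued character of $\PSL(2,p^a)$ (for instance the trivial character, or a Steinberg-type character of degree $p^a$, which is $p'$ only after care — so the trivial character is the safe choice) on the remaining factors, so that the resulting product lies over precisely $\lambda$. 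Transporting through the triple isomorphism of Theorem~\ref{theorem:field_in_triples} and Corollary~\ref{corollary:character_triple} then yields the required $G$-invariant $\theta\in\irr_{p'}(M)$ with $\mathbb{Q}(\theta)\subseteq\cycl{r}$ lying over $\phi$.

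The main obstacle I anticipate is the bookkeeping in this last matching step: one must verify that $\lambda$ and the central character of the constructed product genuinely agree (not merely have the same field of values), which requires knowing the Schur multiplier of $\PSL(2,p^a)$ is exactly $\mathbb{Z}/2$ in the range $p^a\equiv 3\pmod 8$ — excluding the exceptional covers of $\PSL(2,4)\cong\PSL(2,5)$ and $\PSL(2,9)$, where $9\equiv 1\pmod 8$ and $4,5$ do not satisfy the congruence, so these are in fact automatically avoided — and then tracking the $G$-action so that the product character is genuinely $G$-invariant and not merely invariant under the base group of the wreath product. A secondary subtlety is ensuring that on the "trivial" factors the chosen character really has $p'$-degree and $\cycl{r}$-values while still lying over the trivial character of the corresponding $\mathbb{Z}/2$; the trivial character works, but one should confirm it composes correctly with the nontrivial pieces to give exactly $\lambda$ on all of $A$.
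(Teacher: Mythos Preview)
Your approach is essentially the same as the paper's: pass to the Schur cover $\Gamma=\prod_i\SL(2,p^a)$ via Corollary~\ref{corollary:Schur_direct_product}, build $\psi=\psi_1\times\cdots\times\psi_k$ by taking $\psi_i=1_{\Gamma_i}$ on the factors where $\lambda_i$ is trivial and the unique faithful rational character of degree $p^a-1$ from Lemma~\ref{lemma:characters_of_SL} on the others, and transport back using Theorem~\ref{theorem:field_in_triples}. Your bookkeeping on the Schur multiplier and on matching $\lambda$ is fine.

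The one genuine gap is in your argument for $G$-invariance of $\theta$. You deduce that $\psi$ is $\Aut(\Gamma)$-invariant (true, by uniqueness) and then want to conclude that $\theta$ is $G$-invariant. But the character triple isomorphism is not known, from anything in Section~3, to intertwine $G$-conjugation on $\irr(M\mid\phi)$ with an $\Aut(\Gamma)$-action on $\irr(\Gamma\mid\lambda)$; Theorem~\ref{theorem:field_in_triples} gives compatibility with \emph{Galois} automorphisms only, not with group automorphisms, and the standard triple machinery says nothing about how an ambient $G$ acts through the isomorphism. So ``$\psi$ is $\Aut(\Gamma)$-invariant'' does not immediately yield ``$\theta^g=\theta$''. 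The paper closes this gap using precisely the uniqueness you already have, but applied on the other side of the isomorphism: for $g\in G$, transport $\theta^g\in\irr(M\mid\phi)$ back to some $\psi'\in\irr(\Gamma\mid\lambda)$. Since the triple isomorphism preserves degree ratios and, by Theorem~\ref{theorem:field_in_triples}, commutes with $\gal(\cycl{\abs{G}}\mid\cycl{r})$, one gets $\psi'(1)=\psi(1)$ and $\mathbb{Q}(\psi')\subseteq\cycl{r}$. Now $\psi$ is the \emph{unique} element of $\irr(\Gamma\mid\lambda)$ of that degree with values in $\cycl{r}$ (each tensor factor of any such product must itself have values in $\cycl{r}$, and then Lemma~\ref{lemma:characters_of_SL} pins it down factor by factor), so $\psi'=\psi$ and hence $\theta^g=\theta$. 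This argument never needs $G$ to act on $\Gamma$ at all.
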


\begin{proof}
Let $\Gamma = \Gamma_1 \times ... \times \Gamma_k$ such that, for each $1 \leq i \leq k$, $\Gamma_i \cong \op{SL}(2,p^a)$; then by Corollary~\ref{corollary:Schur_direct_product} $\Gamma$ is a Schur representation group for $M/N$. Let $A = \ze{\Gamma}$, then $(\Gamma,A,\lambda)$ is a canonically constructed character triple isomorphic to $(M,N,\phi)$, for some $\lambda \in \lin(A)$. In particular, $A = \ze{G} = A_1 \times ... \times A_k$, with $A_i = \ze{\Gamma_i}$ for each $1 \leq i \leq k$, and we can write $\lambda = \lambda_1 \times ... \times \lambda_k$, with $\lambda_i \in \lin(A_i)$.

For each $1 \leq i \leq k$, take $\psi_i \in \irr(\Gamma_i)$ such that
\begin{itemize}
\item if $\lambda_i = 1_{A_i}$, then $\psi_i = 1_{\Gamma_i}$, and
\item if $\lambda_i \neq 1_{A_i}$, then $\psi_i$ is the only faithful irreducible character of $\Gamma_i$ of degree $\psi_i(1)=p^a - 1$ and of values in $\cycl{r}$, which exists because of Lemma~\ref{lemma:characters_of_SL}.
\end{itemize}
Then, the character $\psi = \psi_1 \times ... \times \psi_k \in \irr(\Gamma)$ lies over $\lambda$ and, in particular, it is the only irreducible constituent of $\lambda^{\Gamma}$ of minimal degree among the ones having values in $\cycl{r}$.

Let $\theta \in \irr(M)$ be the character corresponding to $\psi$ in the character triple isomorphism, then $\theta(1)/\phi(1) = \psi(1) = (p^a - 1)^b$, for some positive integer $b$, because of \cite[Lemma~11.24]{Isaacs}, and it follows that $p \nmid \theta(1)$. Moreover, it follows from Theorem~\ref{theorem:field_in_triples} that $\theta$ has values in $\cycl{r}$.

It only remains to prove that $\theta$ is $G$-invariant. Let $g \in G$ and let $\psi_1 \in \irr(\Gamma \smid \lambda)$ be the character corresponding to $\theta^g \in \irr(M \smid \phi)$. Then, $\psi_1$ has values in $\cycl{r}$ by Theorem~\ref{theorem:field_in_triples}, and $\psi_1(1) = \theta^g(1)/\phi(1) = \psi(1)$. It follows by construction that $\psi_1 = \psi$ and, thus, $\theta^g = \theta$ and the theorem is proved.
\end{proof}

\section{Simple groups}

In this section, we prove Theorem~\ref{result:general} for almost simple groups. Because of \cite[Theorem~C]{Navarro-Tiep:Character_cyclotomic_field}, we actually need to study only groups whose socle is isomorphic to $\op{PSL}(2,3^a)$ for some odd $a > 1$, since otherwise the theorem follows from \cite[Theorem~C]{Navarro-Tiep:Character_cyclotomic_field}, as we will see in Corollary~\ref{corollary:simple_groups}.

In this section we also prove Theorem~\ref{theorem:lie_type} and Corollary~\ref{corollary:simple_coprime}, which will be needed for the proof of Theorem~\ref{result:solvable}. The results we prove here are actually stronger than what we need.

\begin{proposition}
\label{proposition:simple_group_PSL}
Let $r$ be a prime number, let $S \cong \op{PSL}(2,3^a)$, for some odd integer $a > 1$, and let $P$ be a $3$-group acting on $S$ such that $P \cap S \in \syl{3}{S}$. Suppose that $r$ divides $\abs{\no{S}{P}}$. Then, $r$ is odd and there exists a non-principal $P$-invariant character $\phi \in \irr_{3'}(S)$ with values in $\cycl{r}$.
\end{proposition}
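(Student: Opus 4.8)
The plan is to exploit the explicit structure of $\Aut(\PSL(2,q))$ together with the character table of $S \cong \PSL(2,q)$, $q = 3^a$ (as in \cite[Theorem~38.1]{Dornhoff:Group_Representation_TheoryA}). I would start by putting $U := P \cap S \in \syl{3}{S}$, which we may take to be the unipotent radical of a Borel subgroup $B = U \rtimes T$ of $S$ with $T$ cyclic of order $(q-1)/2$. Since $\op{Out}(S) \cong C_2 \times C_a$, the $C_2$ generated by the diagonal automorphism and the $C_a$ by the Frobenius, and since $P$ is a $3$-group, the image of $P$ in $\op{Out}(S)$ is a cyclic $3$-group lying in the field-automorphism part: write $3^b$ for its order, put $q_0 = 3^{a/3^b}$, and let $\sigma\colon x \mapsto x^{q_0}$ be the field automorphism of $S$ of order $3^b$ generating that image. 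As $P$ and $U \rtimes \langle\sigma\rangle$ are both Sylow $3$-subgroups of $X := S\langle\sigma\rangle$, and every element of $X$ is a product of an element of $S$ with a power of $\sigma$ (and $\sigma$ normalizes $U$), $P$ is conjugate in $S$ to $U \rtimes \langle\sigma\rangle$. Such a conjugation changes neither $\abs{\no{S}{P}}$ nor the set of $P$-invariant characters of $S$, because inner automorphisms fix $\irr(S)$ pointwise; so I would simply assume $P = U \rtimes \langle\sigma\rangle$, and then $P$ acts on $\irr(S)$ exactly as the cyclic group $\langle\sigma\rangle$ of field automorphisms.

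Next I would locate $\no{S}{P}$. As $P$ normalizes $U$ we have $U \leq \no{S}{P} \leq \no{S}{U} = B = U \rtimes T$, and a short computation in $B\langle\sigma\rangle$ shows that $ut$ (with $u \in U$, $t \in T$) normalizes $U \rtimes \langle\sigma\rangle$ precisely when $t \in \ce{T}{\sigma}$; hence $\no{S}{P} = U \rtimes \ce{T}{\sigma}$. Identifying $T$ with $\mathbb{F}_q^\times/\{\pm 1\}$, one gets $\ce{T}{\sigma} \cong \mathbb{F}_{q_0}^\times/\{\pm 1\}$ of order $(q_0-1)/2$; the subtle point here is that $(q-1)/(q_0-1) = 1 + q_0 + \dots + q_0^{3^b-1}$ is odd (as $3^b$ is odd), which excludes solutions of $a^{q_0} = -a$ that would otherwise enlarge $\ce{T}{\sigma}$. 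Since $a$ is odd, $q \equiv 3 \pmod 4$, so $(q-1)/2$ and therefore its divisor $(q_0-1)/2$ are odd, while $\abs{U} = q$ is odd; hence $\abs{\no{S}{P}} = q(q_0-1)/2$ is odd and the prime $r$ dividing it is odd (in particular $r \neq 2$). This establishes the first assertion.

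To produce $\phi$ I would distinguish two cases. If $r = 3$: because $a$ is odd, $S$ has exactly two irreducible characters of degree $(q-1)/2$, a degree prime to $3$, and their field of values is $\mathbb{Q}(\sqrt{-q}) = \mathbb{Q}(\sqrt{-3}) = \cycl{3}$; any automorphism of $S$ either fixes each of them or interchanges the two, so the $3$-group $\langle\sigma\rangle$ fixes each, and either one is a valid $\phi$. If $r \neq 3$: from $r \mid \abs{\no{S}{P}} = q(q_0-1)/2$ and $r \neq 3$ we obtain $r \mid q_0 - 1$, hence $r \mid q-1$. Pick $\mu \in \irr(T)$ of order $r$ (possible since $r \mid (q-1)/2 = \abs{T}$) and let $\phi = \chi_\mu$ be the corresponding principal-series character of $S$, which is irreducible of degree $q+1$ since $o(\mu) = r > 2$. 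Then $3 \nmid \phi(1) = q+1 \equiv 1 \pmod 3$; the field of values of $\phi$ is $\mathbb{Q}(\mu+\mu^{-1})$, the real subfield of $\cycl{r}$, so it lies in $\cycl{r}$; and $\sigma$ fixes $\phi$ because $\sigma$ acts on $\irr(T)$ by $\mu \mapsto \mu^{q_0}$ and $q_0 \equiv 1 \pmod r$, so $\mu^{q_0} = \mu$. Thus in both cases $\phi$ is non-principal, has values in $\cycl{r}$, has degree prime to $3$, and is $P$-invariant.

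I expect the main obstacle to be the Lie-theoretic bookkeeping in the second step: the precise form of $\op{Out}(\PSL(2,q))$ and of its field-automorphism subgroup, the Sylow-conjugacy reduction to $U \rtimes \langle\sigma\rangle$, and above all the computation $\abs{\ce{T}{\sigma}} = (q_0-1)/2$. The parity of $3^b$ — equivalently, the standing hypothesis that $a$ is odd — is exactly what keeps an extra factor of $2$ out of this index, and that single fact is what simultaneously forces $r$ to be odd and pins down the torus part of $\no{S}{P}$ finely enough to feed the character construction. By contrast, the character-theoretic inputs (degrees, fields of values, and behaviour of principal-series and exceptional characters under field automorphisms) are routine and can be read off the character table of $\PSL(2,q)$.
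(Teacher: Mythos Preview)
Your proof is correct and follows essentially the same route as the paper's: both arguments show $r\mid q_0-1$ (in the paper's notation $q_0=3^c$) and then pick, for $r=3$, one of the two Weil characters of degree $(q-1)/2$, and for $r\neq 3$, the principal-series character $\chi_i$ with $i=(q-1)/r$, checking invariance under the field automorphism via $q_0\equiv 1\pmod r$. The only organizational difference is that you first conjugate $P$ to $U\rtimes\langle\sigma\rangle$ and compute $\no{S}{P}=U\rtimes\ce{T}{\sigma}$ once and for all, which lets you read off both the oddness of $\lvert\no{S}{P}\rvert$ and the divisibility $r\mid q_0-1$ simultaneously and treat the cases $P\leq S$ and $P\not\leq S$ uniformly; the paper instead gets oddness from the cheap inclusion $\no{S}{P}\leq\no{S}{P\cap S}$ and then splits into the two subcases, extracting $r\mid 3^c-1$ in the second one by analysing a single $r$-element of $\no{S}{P}$.
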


\begin{proof}
We first prove that $r$ is odd, since it is very easy to see: let $g \in \no{S}{P}$, then $g$ also normalizes $P \cap S \in \syl{3}{P}$. It follows that $\no{S}{P} \leq \no{S}{P \cap S}$, which has odd order.

To prove the rest, we keep the notation of \cite[Theorem~38.1]{Dornhoff:Group_Representation_TheoryA} for the irreducible characters of $\PSL(2,q)$, where $q=3^a$. The group $S$ admits exactly two irreducible characters $\eta_1,\eta_2$ of degree $\frac{q-1}{2}$, called Weil characters, having values in $\cycl{3}$. Clearly, these characters are fixed by automorphisms of odd order, proving our statement for $r=3$.

Assume $r > 3$. If $P\leq S$, then $r$ divides $\frac{q -1}{2}$, as $\abs{\no{S}{P}} = \frac{q(q-1)}{2}$. Since $i = \frac{q-1}{r}$ is even and $i \leq \frac{q-1}{3}$, we can take as $\phi$ the character $\chi_i$, which is an irreducible character of $S$ having degree $\varphi(1)= q + 1$ and values in $\cycl{r}$.

Now, suppose that $P \not\leq S$. Let $g \in \no{S}{P}$ be an $r$-element and notice that, as we have already seen, $g$ normalizes $P\cap S \in \syl{3}{S}$. This implies that $g=\mathrm{diag}(\lambda,\lambda^{-1})$, for some $\lambda \in \mathbb{F}_q^*$. Let $\alpha \in P$ be a field automorphism of order $3^b>1$, where $a=3^bc$. Then 
$$(\alpha^{-1})^g = g^{-1} \alpha^{-1} g = (g^{-1}g^\alpha)\alpha^{-1}$$
belongs to $P$ if and only if $g^{-1} g^\alpha \in P$. From $g^{-1}g^\alpha=\mathrm{diag}(\lambda^{-1}\lambda^\alpha, \lambda\lambda^{-\alpha})$, we get that $(\alpha^{-1})^g \in P$ if and only if $\lambda^\alpha=\lambda$. Let $\nu \in \mathbb{F}_q^*$ be an element of order $q-1$. Then $\nu^\alpha = \nu^{3^c}$ and it follows that $\lambda \in \langle \nu^k \rangle$, where $k=\frac{3^a-1}{3^c-1}$. Since $g$ is an $r$-element, then $r$ divides $3^c - 1$. So, we can write $3^c - 1 = rh$ for some even integer $h$. Now, take the irreducible character $\chi_{kh}$ of $\SL(2,q)$, having degree $q+1$. Note that $kh=\frac{q-1}{r}\leq\frac{q-3}{2}$ is even, whence $Z(\SL(2,q))\leq \ker(\chi_{kh})$. Moreover, $\mathbb{Q}(\chi_{kh})\leq \mathbb{Q}(\rho^{kh})=\mathbb{Q}_r$.

We show that $\chi_{kh}$ is fixed by $\alpha$: looking at the character table of $\SL(2,q)$, it suffices to consider the conjugacy class of $\xi=\mathrm{diag}(\nu, \nu^{-1})$. Taking a primitive $(q-1)$-root of unity $\rho\in\mathbb{C}$, from
$$(\rho^{kh})^{3^c-1} = \rho^{\frac{(q-1)(3^c-1)}{r}} =(\rho^{q-1})^{h}=1$$
we get $\rho^{kh3^c}=\rho^{kh}$. Hence,
$$\chi_{kh}^\alpha(\xi)=\chi_{kh}(\xi^\alpha)=\chi_{kh}(\xi^{3^c})= \rho^{kh 3^c}+\rho^{-kh 3^c}=\rho^{kh}+\rho^{-kh} = \chi_{kh}(\xi)$$
shows that $\chi_{kh}$ is fixed by $\alpha$, as required. Finally, if we take $\alpha$ such that it generates the cyclic group $PS/S$, we have that the character $\chi_{kh}$ is $P$-invariant, as we required.
\end{proof}

\begin{theorem}
\label{theorem:lie_type}
Let $S$ be a non-abelian simple group of Lie type. Then, there exists a non-principal, rational-valued character $\phi \in \irr(S)$ of odd degree which is invariant under field automorphisms.
\end{theorem}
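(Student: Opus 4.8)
The plan is to go through the list of non-abelian simple groups of Lie type, invoking the classification, and in each case exhibit a rational-valued irreducible character of odd degree that is stable under the full group of field automorphisms. The natural candidate is the \emph{Steinberg character} $\mathrm{St}_S$: it is rational-valued, it is fixed by every automorphism of $S$ (in particular by diagonal, field and graph automorphisms), and its degree is $\abs{S}_p$, the order of a Sylow $p$-subgroup where $p$ is the defining characteristic. Thus whenever $p$ is odd, $\mathrm{St}_S$ does the job immediately, and this disposes of all groups of odd defining characteristic in one line. So the real work is confined to the groups of Lie type in characteristic $2$, where $\mathrm{St}_S(1)$ is even and another character must be found.

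For the characteristic-$2$ case I would argue as follows. For the classical groups $\PSL_n(2^f)=\SL_n(2^f)$, $\PSU_n(2^f)=\SU_n(2^f)$, $\mathrm{PSp}_{2n}(2^f)$, $\mathrm{P\Omega}^{\pm}_{2n}(2^f)$ (trivial Schur multiplier issues being absent in most characteristic-$2$ cases, or handled separately), one uses the two irreducible Weil characters of the symplectic and unitary groups, or more uniformly the semisimple characters parametrized by a rational semisimple element $s$ of the dual group with $C_{G^*}(s)$ a maximal torus: such a character $\chi_s$ has degree $\abs{G:C_G(s)}_{p'}$ which is a $p'$-number, hence odd in characteristic $2$, and its rationality and field-automorphism invariance follow by choosing $s$ to lie in a rational conjugacy class fixed setwise by the Frobenius twists defining the field automorphisms (for instance $s$ of order $q-1$ or $q+1$ in a maximally split or minimal torus). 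The Lusztig parametrization shows distinct eligible choices of $s$ are permuted by field automorphisms, and one selects an orbit representative, or exploits that the relevant central/Weil characters are unique of their degree; in the low-rank exceptional-behaviour cases ($\PSL_2(4)\cong\PSL_2(5)$, $\PSU_4(2)\cong \mathrm{PSp}_4(3)$, $\mathrm{Sz}(q)$, $^2G_2(q)$, $^2F_4(q)$, etc.) one checks the character tables (available in the ATLAS and in Lübeck's data, or in the explicit tables for $\PSL_2$, $\mathrm{Sz}$, $^2G_2$, $^2F_4$) directly. For the exceptional groups $G_2(2^f)$, $F_4(2^f)$, $E_6(2^f)$, $^2E_6(2^f)$, $E_7(2^f)$, $E_8(2^f)$, $^3D_4(2^f)$ one likewise picks a semisimple character attached to a rational regular or subregular semisimple element of $p'$-order, or cites the known generic character degrees to produce an odd-degree unipotent or semisimple character fixed by $\gamma$.

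The main obstacle I anticipate is the uniformity and bookkeeping of the characteristic-$2$ classical and exceptional cases: one must ensure simultaneously (i) that the chosen semisimple element $s$ (or the chosen small-degree character) is rational, i.e.\ $G^*$-conjugate to all its $p'$-power-th powers, which forces a constraint like $N_{W}(s)$ acting suitably or $C_{G^*}(s)$ being a torus with the right structure; (ii) that $\chi_s$ is a genuine irreducible character of the \emph{simple} group, not merely of the universal or adjoint version, which requires $s$ to have trivial component in the relevant part of the center — easy since we may take $s$ of order coprime to the center, or use that in characteristic $2$ many of these centers are trivial; and (iii) that $\chi_s$ is invariant under the full cyclic group of field automorphisms $\gamma$ of order $f$, which follows once $s$ is chosen in a torus normalized by $\gamma$ with $\gamma$ acting on it as a $q$-power map fixing the chosen rational class. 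I would organize the proof as: first the odd-characteristic case via Steinberg; then a lemma producing, for each family of classical groups in characteristic $2$, an explicit odd-degree rational $\gamma$-invariant semisimple or Weil character; then the finitely many exceptional families handled by citation of their character degree data; and finally a short remark discarding the sporadic coincidences at small $q$ by direct inspection.
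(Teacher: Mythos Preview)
Your high-level architecture matches the paper exactly: the Steinberg character handles odd defining characteristic in one stroke, and in characteristic $2$ one uses Lusztig's parametrization to pick a semisimple character $\chi_s$ of odd degree, checking rationality and field-automorphism invariance via properties of the class $[s]$ in the dual group. So the plan is right.

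The gap is in your concrete choice of $s$. You propose ``$s$ of order $q-1$ or $q+1$ in a maximally split or minimal torus''. Such an element is almost never rational: in a rank-one torus the Weyl group acts only by inversion, so $s$ is $G^*$-conjugate to $s^{-1}$ but not to its other powers, and $\chi_s$ will have field of values $\cycl{q\pm 1}^+$, not $\mathbb{Q}$. Moreover, an element whose order depends on $q$ gives no obvious reason for the class $[s]$ to be stable under the full group of field automorphisms (which do not fix $q$). The paper avoids both problems simultaneously by taking $s$ of \emph{fixed small order independent of $q$}: concretely, a non-central element of order $3$ inside a copy of $\SL_2(2)\leq G^*$ (and an element of order $5$ for the Suzuki groups). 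Such an $s$ is rational because $\SL_2(2)\cong S_3$ has a single class of $3$-elements, and $[s]$ is field-automorphism invariant because $s$ already lives over the prime field.

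There is a second point you underestimate. You dismiss the passage from the adjoint (or simply connected) group to the simple group $S$ as ``easy since we may take $s$ of order coprime to the center, or use that in characteristic $2$ many of these centers are trivial''. In fact this is exactly where the paper has to work hardest: for $A_{n-1}^\epsilon(q)$ with $3\mid q-\epsilon$ and for $E_6^\epsilon(q)$, the index $\abs{G:S}$ has an odd part and one cannot simply restrict $\chi_s$ from the adjoint group. The paper switches to the simply connected cover, chooses $s$ (of order $3$, respectively $5$) in the \emph{derived subgroup} of the dual with connected centralizer so that $Z(H)\leq\ker(\chi_s)$, and treats $\PSL_3^\epsilon(q)$ separately via the Simpson--Frame tables. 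Your proposal does not indicate how you would get past this.
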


\begin{proof}
Let the group $S$ be a finite simple group of Lie type in characteristic $\ell$. If $\ell$ is odd, we can take as $\phi$ the Steinberg character of $S$. In fact, this irreducible character is rational-valued and its  degree equals the order of a Sylow $\ell$-subgroup of $S$. Since it is the unique irreducible character of $S$ of $\ell$-defect zero (see \cite[Theorem 4]{Curtis:The_Steinberg_character}), it is $\Aut(S)$-invariant.

So, we may assume $\ell=2$. We make use of the Lusztig's parametrization of the irreducible characters of a group of Lie type (see \cite{Carter:Finite_groups_of_Lie_type,Digne-Michel:Representations_finite_groups}), adapting the ideas of \cite[Lemma 9.2 and Theorem 9.5]{Navarro-Tiep:Rational_irreducible_characters}. Let $\G$ be a simple algebraic group of adjoint type in characteristic $2$ and let $F$ be a Frobenius map on $\G$ such that $S$ is derived subgroup of $G=\G^F$. Let $(\G^\ast, F^\ast)$ be the dual of $(\G,F)$, and write $G^\ast={\G^\ast}^{F^\ast}$.

First, consider the case when $G$ is not a Suzuki group. Then, we can embed $X=\SL_2(q)$ in $G^*$, for some power $q$ of $2$. In particular, we can embed a non-central element $s \in \SL_2(2)\leq X$ of order $3$. This element $s$ is rational and the $G^\ast$-conjugacy class $[s]$ is invariant under field automorphisms of $G^\ast$. Since $Z(\G)=1$, the centralizer of the semisimple element $s\in \G^*$ is connected. By \cite[Lemma 9.1]{Navarro-Tiep:Rational_irreducible_characters} and \cite{Lusztig:Representations_reductive_groups_disconnected} the corresponding semisimple character $\chi_s$ is a non-linear irreducible character of $G$ which is rational-valued, of odd degree and invariant under fields automorphisms of $G$. If $|G:S|$ is a power of $2$, then we can take as $\phi$ the restriction of $\chi_s$ to $S$. Hence, we are left to the groups of type $A_{n-1}^\epsilon(q)$ and $E_6^\epsilon(q)$ with  $q=2^a$, $\epsilon=\pm$ and $(n,q-\epsilon)$ not a $2$-power.

We start considering the groups $S=\PSL_3^\epsilon(q)$ with $\gcd(q- \epsilon,3)=3$. By \cite{Simpson-Frame:The_character_table_for} the group $S$ admits three irreducible rational-valued characters of degree $\frac{(q^2+\epsilon q+1)(q+\epsilon)}{3}$. Looking at the character table in \cite{Simpson-Frame:The_character_table_for}, it is easy to see that one of them is fixed by the field automorphism of order $a$. For the other groups, let $\H$ be a simple simply connected algebraic group and let $F$ be a Frobenius map on $\H$ such that $S=\frac{H}{Z(H)}$, where $H=\H^F$. Let $(\H^\ast,F^\ast)$ be the dual of $(\H,F)$. Suppose $S=\PSL_n^\epsilon(q)$, with $n\geq 5$. The group ${\H^\ast}^{F^\ast}=\PGL_n(q)$ contains a rational non-central element $s$ of order $3$, whose preimage $\hat s$ in $\GL_n^\epsilon(q)$ is $\textrm{diag}\left(\begin{pmatrix} 0 & 1 \\ 1 & 1\end{pmatrix}, \mathrm{I}_{n-2}\right)$. As shown in the proof of \cite[Theorem 9.5]{Navarro-Tiep:Rational_irreducible_characters}, the element $s$ belongs to the derived subgroup of ${\H^\ast}^{F^\ast}$ and its centralizer $C_{\H^\ast}(s)$ is connected. Hence, the corresponding semisimple character $\chi_s$ is a non-principal irreducible rational-valued character of $H$ such that $Z(H)\leq \ker(\chi_s)$. Furthermore, since the conjugacy class $[s]$ is invariant under field automorphisms, $S$ admits a character $\phi$ as in the statement. Now, suppose that $S$ is of type $E_6^\epsilon(q)$. Again by \cite[Theorem 9.5]{Navarro-Tiep:Rational_irreducible_characters}, we can take a rational non-central element $s\in F_4(2)$ of order $5$ that can be embedded in ${\H^\ast}^{F^\ast}$ and whose centralizer $C_{\H^\ast}(s)$ is connected. Furthermore, $s$ belongs to the derived subgroup of ${\H^\ast}^{F^\ast}$ and it is fixed by field automorphisms. Then, the semisimple character $\chi_s$ of $H$ gives rise to a non-principal irreducible character $\phi$ of $S$ which is rational-valued, of odd degree and invariant under field automorphisms.

Finally, suppose $G=\Sz(q)$, $q>2$. Then $G^\ast$ contains a unique conjugacy class $[s]$ of elements of order $5$. Arguing as above, the corresponding semisimple character $\chi_s$ is non-principal, rational-valued, of odd degree and $\Aut(G)$-invariant.
\end{proof}

\begin{corollary}
\label{corollary:simple_coprime}
Let $p$ be an odd prime number, let $S$ be a non-abelian simple group such that $p \nmid \abs{S}$ and let $P$ be a $p$-group acting on $S$. Then, there exists a non-principal, rational-valued $P$-invariant character $\phi \in \irr(S)$ of odd degree.
\end{corollary}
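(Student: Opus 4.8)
The plan is to reduce to the two cases covered by the results already established in this section: groups of Lie type and the remaining finite simple groups. Since $P$ is a $p$-group with $p$ odd and $p \nmid \abs{S}$, the action of $P$ on $S$ induces a homomorphism $P \to \aut(S)$, and by the classification of finite simple groups $\out(S)$ is solvable with a normal series whose factors are cyclic groups of field automorphisms, diagonal automorphisms, and (small) graph automorphisms. The key observation is that since $p$ is coprime to $\abs{S}$ and, in particular, coprime to the orders of the diagonal and graph parts of $\out(S)$ in all the relevant cases, the image of $P$ in $\out(S)$ must consist of field automorphisms — more precisely, it lies in a Hall $p$-subgroup of $\out(S)$, which is cyclic and generated by a power of a field automorphism. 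Thus it suffices to produce a non-principal rational-valued irreducible character of $S$ of odd degree that is invariant under all field automorphisms.

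First I would dispose of the sporadic groups and the alternating groups: for these, $\out(S)$ has order a power of $2$ (or is trivial), hence contains no nontrivial $p$-element for $p$ odd, so $P$ acts trivially on $S$ and any non-principal rational character of odd degree will do — such a character exists by \cite[Theorem~A]{Navarro-Tiep:Rational_irreducible_characters} (or can be exhibited directly). If $S$ is a non-abelian simple group of Lie type, then Theorem~\ref{theorem:lie_type} furnishes a non-principal, rational-valued $\phi \in \irr(S)$ of odd degree invariant under all field automorphisms of $S$; since the image of $P$ in $\aut(S)$ is (up to inner automorphisms) generated by a field automorphism, $\phi$ is $P$-invariant after possibly replacing it by a suitable $S$-conjugate — but a rational-valued character of odd degree that is field-automorphism-invariant is in fact invariant under the full group generated by $\inn(S)$ and field automorphisms, so no adjustment is needed. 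This gives the desired $\phi$.

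The main obstacle is verifying that the image of $P$ in $\out(S)$ genuinely reduces to field automorphisms, i.e., that $p$ cannot divide the order of the diagonal or graph part of $\out(S)$. For the graph automorphisms this is immediate since these have order $2$ or $3$; the order-$3$ triality of $D_4(q)$ is the only case where an odd prime ($p=3$) could appear, and one must check that the Lie-type character produced in Theorem~\ref{theorem:lie_type} (the Steinberg character when the defining characteristic is odd, a semisimple character otherwise) is in fact $\aut(S)$-invariant, not merely field-automorphism-invariant — which is true for the Steinberg character since it is the unique character of its defect, and which one checks for the semisimple characters using that the relevant semisimple class $[s]$ is rational and stable under all automorphisms. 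For the diagonal part, one notes that if $p \mid \abs{\out(S)_{\mathrm{diag}}}$ then $p$ divides $q - \epsilon$ (in the linear/unitary case) or a small invariant, but $p \nmid \abs{S}$ forces $p \nmid q$, and the order of the diagonal group divides $\abs{S}$-related quantities — a short case check, but essentially bookkeeping. Once this reduction is in place, the corollary follows immediately from Theorem~\ref{theorem:lie_type} together with the trivial cases.
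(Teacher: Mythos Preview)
Your approach matches the paper's: reduce the action of $P$ to field automorphisms and invoke Theorem~\ref{theorem:lie_type}, with \cite[Theorem~B]{Navarro-Tiep:Rational_irreducible_characters} covering the case where $P$ acts trivially. The paper's proof simply asserts this reduction (after replacing $P$ by $P/\ce{P}{S}$, either $P=1$ or $S$ is of Lie type and the elements of $P$ are field automorphisms) as a known consequence of the structure of $\out(S)$, and is correspondingly three lines long.

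Your handling of the triality case is an unnecessary detour that leaves a loose end. You propose, for $p=3$ and $S \cong D_4(q)$, to verify that the character produced by Theorem~\ref{theorem:lie_type} is actually $\Aut(S)$-invariant; in characteristic~$2$ this would require checking that the semisimple class $[s]$ is stable under the graph automorphisms, which Theorem~\ref{theorem:lie_type} does not establish. But the case is vacuous: $3$ divides $q(q^{2}-1)$ for every prime power $q$, hence $3 \mid \abs{D_4(q)}$, so the hypothesis $p \nmid \abs{S}$ already excludes it. The same mechanism disposes of your diagonal-automorphism worry, since any odd prime dividing the order of the diagonal part of $\out(S)$ also divides $\abs{S}$. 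With these observations your argument collapses to the paper's.
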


\begin{proof}
By replacing $P$ with $P/\ce{P}{S}$, we may assume that $P \leq \Aut(S)$. Since $p \nmid \abs{S}$, then either $P=1$, and the thesis follows for \cite[Theorem~B]{Navarro-Tiep:Rational_irreducible_characters}, or $S$ is a simple group of Lie type and the elements of $P$ are field automorphisms. Now the thesis follows from Theorem~\ref{theorem:lie_type}.
\end{proof}

We can now collect all the facts we proved in this section in the following corollary.

\begin{corollary}
\label{corollary:simple_groups}
Let $p$ and $r$ be two prime numbers, with $p$ odd, let $S$ be a non-abelian simple group and let $P$ be a $p$-group acting on $S$ such that $P \cap S \in \syl{p}{S}$. Assume also that, if $p=3$ and $S \cong \op{PSL}(2,3^a)$ for some odd integer $a > 1$, then $r$ divides $\abs{\no{S}{P}}$. Then, there exists a non-principal $P$-invariant character $\phi \in \irr_{p'}(S)$ with values in $\cycl{r}$. Moreover, if $p \nmid \abs{S}$, then we can choose $\phi$ such that it has odd degree.
\end{corollary}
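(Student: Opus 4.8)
The plan is to dispose of the cases according to the nature of the prime $p$ relative to $\abs{S}$ and to the isomorphism type of $S$, using the results already established in this section. First I would split on whether $p \mid \abs{S}$. If $p \nmid \abs{S}$, then $P\cap S = 1$, so in particular $P \cap S \in \syl{p}{S}$ is automatic, and Corollary~\ref{corollary:simple_coprime} directly produces a non-principal, rational-valued $P$-invariant character $\phi \in \irr(S)$ of odd degree; such a $\phi$ lies in $\irr_{p'}(S)$ (its degree is odd, and more to the point divides $\abs{S}$, which is prime to $p$) and has values in $\mathbb{Q} \subseteq \cycl{r}$. This settles the case $p \nmid \abs{S}$ and simultaneously gives the ``moreover'' clause.

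Next assume $p \mid \abs{S}$. If $p = 3$ and $S \cong \op{PSL}(2,3^a)$ for some odd $a > 1$, then by hypothesis $r$ divides $\abs{\no{S}{P}}$, and Proposition~\ref{proposition:simple_group_PSL} (applied with this $S$, this $P$, and this $r$) yields a non-principal $P$-invariant character $\phi \in \irr_{3'}(S)$ with values in $\cycl{r}$, as required. In every remaining case --- i.e.\ $p \mid \abs{S}$ but $S$ is \emph{not} of the form $\op{PSL}(2,3^a)$ with $p = 3$ and $a > 1$ odd --- I would invoke \cite[Theorem~C]{Navarro-Tiep:Character_cyclotomic_field}, which asserts (for any prime $p$ dividing the order of an almost simple group with simple socle $S$, away from the short explicit exception list) the existence of a non-principal $\aut(S)$-invariant character in $\irr_{p'}(S)$ that is rational-valued, hence has values in $\mathbb{Q} \subseteq \cycl{r}$; since the character is $\aut(S)$-invariant it is in particular $P$-invariant for the $p$-group $P$ acting on $S$. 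One must check that the exceptions in \cite[Theorem~C]{Navarro-Tiep:Character_cyclotomic_field} (for $p$ odd) are exactly, or are contained in, the family $S \cong \op{PSL}(2,3^a)$ with $a > 1$ odd and $p = 3$ that we have already excluded; this is the bookkeeping step referenced in the paragraph opening Section~5 (``we actually need to study only groups whose socle is isomorphic to $\op{PSL}(2,3^a)$'').

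The main obstacle is precisely this last verification: matching the exceptional cases of \cite[Theorem~C]{Navarro-Tiep:Character_cyclotomic_field} against the single family handled by Proposition~\ref{proposition:simple_group_PSL}, and making sure the passage from ``$\aut(S)$-invariant character of the almost simple group'' in that reference to ``$P$-invariant character of $S$'' here is legitimate --- in particular that the $p$-group $P$ acting on $S$ with $P \cap S \in \syl{p}{S}$ can be replaced by its image in $\aut(S)$ without loss, which is routine since $\ce{P}{S}$ acts trivially and $P/\ce{P}{S} \hookrightarrow \aut(S)$. Everything else is assembly of already-proved statements, so the corollary follows by combining the three cases.
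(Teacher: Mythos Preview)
Your proposal is correct and follows essentially the same route as the paper: handle the exceptional family $S\cong\op{PSL}(2,3^a)$, $a>1$ odd, $p=3$ via Proposition~\ref{proposition:simple_group_PSL}; handle $p\nmid\abs{S}$ via Corollary~\ref{corollary:simple_coprime}; and cover the remaining cases with \cite[Theorem~C]{Navarro-Tiep:Character_cyclotomic_field}. The only cosmetic difference is that the paper does not separate out $p\nmid\abs{S}$ in the main argument and invokes Theorem~C at the level of $G=PS$ (obtaining $\chi\in\irr_{p'}(G)$ rational and then restricting: $G/S$ is a $p$-group and $p\nmid\chi(1)$ force $\chi_S\in\irr(S)$), whereas you phrase Theorem~C as producing an $\aut(S)$-invariant character of $S$ directly; both readings lead to the same $P$-invariant $\phi$.
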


\begin{proof}
If $p=3$ and $S \cong \op{PSL}(2,3^a)$, for some odd integer $a > 1$, then it follows from Theorem~\ref{proposition:simple_group_PSL}. Otherwise, let $G=PS$, then it follows from \cite[Theorem~C]{Navarro-Tiep:Character_cyclotomic_field} that there exists a non-trivial, rational-valued character $\chi \in \irr(G)$ of degree not divisible by $p$. Since $G/S$ is a $p$-group, then $\chi_S \in \irr(S)$ and it is rational-valued and $P$-invariant. Finally, the last part of the thesis follows from Corollary~\ref{corollary:simple_coprime}.
\end{proof}

Notice that, in Corollary~\ref{corollary:simple_groups}, it is not always possible to find $\phi \in \irr_{p'}(S)$ of odd degree, if $p \mid \abs{S}$. Examples of this fact are the group $\PSL(3,2)$, for $p=7$, and $\PSL(2,11)$, for $p=11$.

\section{Proof of the main theorems}
\label{section:main}

In this section, we are finally going to prove our main theorems. First, we need a result from \cite{Isaacs} to deal with abelian chief factors.

\begin{proposition}
\label{proposition:abelian_section}
Let $S$ be a solvable group acting on a finite group $G$ and let $N \lhd SG$ such that $(\abs{S},\abs{G:N})=1$ and $\ce{G/N}{S}=1$. If $\phi \in \irr(N)$ is $S$-invariant, there exists a unique $S$-invariant character in $\irr(G \! \mid \! \phi)$.
\end{proposition}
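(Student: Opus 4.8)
The statement is essentially the "going-up" half of the Glauberman–Isaacs correspondence for coprime action combined with extendibility through abelian sections, so the plan is to reduce to results already available in \cite{Isaacs}. First I would form the semidirect-type setup: since $S$ acts on $G$ and $N \lhd SG$ with $(\abs{S},\abs{G:N})=1$, the group $G/N$ is an $S$-group of order coprime to $\abs{S}$, so coprime-action machinery applies to the action of $S$ on $G/N$. I would work inside $\Gamma = SG$ (or rather the relevant section of it), noting $N \lhd \Gamma$ and $G \lhd \Gamma$, and let $\overline{\Gamma} = \Gamma/N$, $\overline{G} = G/N$, an $S$-invariant normal subgroup with $\ce{\overline G}{S}=1$.

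The heart of the argument is the existence-and-uniqueness of an $S$-invariant extension. For this I would invoke the Glauberman correspondence / Isaacs' theory of character correspondences under coprime action: because $(\abs S,\abs{G:N})=1$ and $\phi$ is $S$-invariant, the set $\irr(G\mid\phi)$ is permuted by $S$, and one wants a unique fixed point. The key external input is \cite[Corollary~13.31]{Isaacs} (or the equivalent statement about $S$-invariant characters lying over an $S$-invariant $\phi$ when $\ce{G/N}{S}=1$), which in the solvable-$S$ case guarantees both existence and uniqueness of an $S$-invariant character in $\irr(G\mid\phi)$ precisely under the hypothesis $\ce{G/N}{S}=1$. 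Concretely, the plan is: (i) reduce to the case $N=1$ if possible, or more robustly pass to the character-triple $(\Gamma/N \text{-analogue})$; (ii) use that $S$ acts coprimely on $G/N$ with $\ce{G/N}{S}=1$ to force any $S$-orbit on $\irr(G\mid\phi)$ of size $>1$; and (iii) apply the counting/fixed-point argument from \cite[Chapter~13]{Isaacs} showing the number of $S$-invariant characters over $\phi$ equals the number of $S$-invariant characters of the trivial group over the Glauberman-correspondent data, namely one.

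The main obstacle I anticipate is handling the case where $\phi$ does \emph{not} extend to $G$ at all, i.e. the genuine Clifford-theoretic obstruction living in $\irr(G\mid\phi)$ rather than just in $\irr(G/N\mid 1)$. Here I would pass to a canonically constructed character triple (as in Section~3 of this paper, or simply \cite[Theorem~11.28]{Isaacs}) to replace $(G,N,\phi)$ by $(\Gamma_0, A, \lambda)$ with $A$ central; the $S$-action should lift compatibly because the construction is canonical, and $\ce{G/N}{S}=1$ is preserved since $\Gamma_0/A\cong G/N$. Once $A$ is central and $\lambda$ linear, $\irr(\Gamma_0\mid\lambda)$ is in $S$-equivariant bijection with $\irr(\Gamma_0/A' )$-type data where the Glauberman correspondence applies directly, and the unique $S$-fixed character over $\lambda$ is read off. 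The only real care needed is checking that the coprimality hypothesis $(\abs S,\abs{G:N})=1$ transfers to the central extension $\Gamma_0$ (it does, since $\abs{\Gamma_0/A}=\abs{G/N}$ and $\abs A$ divides the Schur multiplier of $G/N$, hence divides $\abs{G/N}$ up to primes already dividing it — or one simply chooses $\Gamma_0$ inside the relevant Hall structure), after which the result is immediate. I would finish by transporting the unique $S$-invariant character back through the character-triple isomorphism to $\irr(G\mid\phi)$.
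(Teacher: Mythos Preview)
The paper's proof is a single sentence: the result is \cite[Problem~13.5]{Isaacs}, with the remark that the hypothesis $\ce{G/N}{S}=1$ used here is stronger than the hypothesis $\ce{G}{S}\leq N$ appearing there. Your plan is therefore far more elaborate than necessary, and in attempting to reconstruct a proof via character triples it runs into a difficulty you acknowledge but do not actually resolve.

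The gap is the lift of the $S$-action through the canonically constructed triple $(\Gamma_0,A,\lambda)$. Saying the action ``should lift compatibly because the construction is canonical'' is not a proof: a Schur representation group of $G/N$ is not unique as an extension, an automorphism of $G/N$ need not lift to an automorphism of a chosen Schur cover, and the isomorphism of \cite[Theorem~11.28]{Isaacs} is not asserted to be equivariant for an external group acting on $(G,N,\phi)$. Your remark that coprimality transfers because $\abs{A}$ divides the Schur multiplier of $G/N$ is also insufficient: the Schur multiplier of $G/N$ can have prime divisors not dividing $\abs{G/N}$, so $(\abs{S},\abs{\Gamma_0})=1$ is not guaranteed. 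Separately, \cite[Corollary~13.31]{Isaacs} requires $(\abs{S},\abs{G})=1$, not merely $(\abs{S},\abs{G:N})=1$, so it does not apply directly; the relative statement over an $S$-invariant $\phi$ under only the weaker coprimality hypothesis is precisely the content of Problem~13.5, and citing it is both sufficient and what the paper does.
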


\begin{proof}
This is part of \cite[Problem~13.5]{Isaacs}. Notice that the condition that $\ce{G}{S} \leq N$, which appears in \cite[Problem~13.5]{Isaacs}, is weaker then asking that $\ce{G/N}{S}=1$.
\end{proof}

\begin{theorem}
\label{theorem:existing_over}
Let $G$ be a finite group, let $P$ be a Sylow $p$-subgroup of $G$, for some odd prime number $p$, let $r \neq p$ be a prime number and let $N \lhd G$ such that $(\abs{\no{G/N}{P}},2r)=1$. Suppose that either $G/N$ is solvable or $p=3$ and every non-abelian chief factor of $G/N$ is isomorphic to the direct product of copies of $\op{PSL}(2,3^a)$, for some odd integer $a > 1$.

Let $\phi \in \irr_{p',\cycl{r}}(N)$ be $P$-invariant, then there exist a group $P \leq U \leq G$ and a character $\chi \in \irr_{p',\cycl{r}}(U \! \mid \! \phi)$ such that $\chi^G \in \irr(G)$.

Moreover, suppose that $G/N$ is solvable, then:
\begin{itemize}
\item if $2 \nmid \phi(1)o(\phi)$, then $\chi$ can be chosen such that $2 \nmid \chi(1)o(\chi)$;
\item if $\phi$ is rational-valued and $\phi(1)$ is a power of $2$, then $\chi$ can be chosen such that it is rational-valued and $\chi(1)$ is a power of $2$.
\end{itemize}
\end{theorem}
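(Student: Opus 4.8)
The plan is to argue by induction on $|G/N|$, peeling off one chief factor $M/N$ of $G/N$ at a time and producing an intermediate group $N \leq U \leq G$ together with a suitable character $\chi$. Before setting up the induction I would reduce to the case where the hypotheses pass cleanly to the relevant quotients: note that $\no{G/N}{P}$ maps onto the Sylow normalizer in any quotient $G/M$ with $N \leq M$, so the condition $(\abs{\no{G/M}{P}},2r)=1$ is inherited, and likewise the structural hypothesis on chief factors (solvable, or $p=3$ with every non-abelian chief factor a direct power of $\op{PSL}(2,3^a)$) is inherited by sections. So it is enough to treat the case where $M/N$ is a \emph{chief} factor of $G/N$ and $\phi \in \irr_{p',\cycl r}(N)$ is $P$-invariant, producing a group $P \leq U \leq M$ (or rather between $N$ and $M$, suitably enlarged to contain a copy of $P$) and $\theta \in \irr_{p',\cycl r}(U \mid \phi)$ which is $G$-invariant-enough to induct on; the final statement then follows by composing these steps up a chief series.

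The core of the induction splits into two cases according to the nature of the chief factor $M/N$. If $M/N$ is \emph{abelian}, it is a $q$-group for some prime $q$; here I would invoke the character triple machinery of Section~3 — pass to a canonically constructed triple $(\Gamma, A, \lambda)$ isomorphic to $(M,N,\phi)$ via Corollary~\ref{corollary:character_triple}, so that $\lambda$ still has values in $\cycl r$ and $A$ is central — and then use coprime-action arguments. When $q \neq p$ one plays off the Sylow normalizer condition: since $2r$ is coprime to $\abs{\no{G/N}{P}}$ and $M/N$ is a $q$-group on which $P$ acts, Glauberman-type correspondence and Proposition~\ref{proposition:abelian_section} (applied with $S$ a suitable solvable group, using $\ce{M/N}{P}$ related to the Sylow normalizer) pin down a \emph{unique} $P$-invariant extension-type character over $\phi$; uniqueness forces it to be fixed by the magic field automorphism for $\{r\}$, hence to have values in $\cycl r$, and uniqueness again gives the $G$-invariance needed to continue the induction. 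When $q = p$ one instead uses that $\phi$ has $p'$-degree and that characters over $\phi$ of $p'$-degree correspond (again via the central triple and a uniqueness/counting argument on $\lambda^\Gamma$) to a well-controlled set, tracking degrees with \cite[Lemma~11.24]{Isaacs}.

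If $M/N$ is \emph{non-abelian}, then by hypothesis $p=3$ and $M/N \cong \op{PSL}(2,3^a)^k$ with $a>1$ odd; moreover the condition $(\abs{\no{G/N}{P}},2r)=1$ together with $P \cap S$ being a Sylow $3$-subgroup of each factor $S$ forces — as in the proof of Proposition~\ref{proposition:simple_group_PSL} — that $r$ does \emph{not} divide the order of the Sylow normalizer inside $M/N$, but the setup of Theorem~\ref{theorem:existing_over_PSL} still applies: it hands us directly a $G$-invariant $\theta \in \irr_{3'}(M \mid \phi)$ with values in $\cycl r$. So here there is essentially nothing to do beyond citing Theorem~\ref{theorem:existing_over_PSL} and checking that the hypothesis $r \neq p$ is in force (which it is). For the ``moreover'' clauses in the solvable case, I would additionally carry the invariants $o(\phi)$ and the $2$-part of $\phi(1)$ through the abelian-factor step: the magic-field-automorphism and uniqueness arguments show the constructed character is rational (resp. has $2$-power degree) whenever $\phi$ was, and the degree ratio $\chi(1)/\phi(1)$ being a value coming from a $p$-solvable section keeps the $2'$-degree and odd-order-determinant conditions stable, much as in the Isaacs--Navarro $B_{p'}$-character formalism.

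The main obstacle I anticipate is the abelian, $q \neq p$ step: getting a \emph{unique} $P$-invariant character over $\phi$ requires that the relevant centralizer $\ce{M/N}{P}$ be trivial (so that Proposition~\ref{proposition:abelian_section} applies with $\ce{G/N}{S}=1$), and this is exactly where the hypothesis $(\abs{\no{G/N}{P}},2r)=1$ must be leveraged — via the Glauberman correspondence, a nontrivial $\ce{M/N}{P}$ would contribute to $\no{G/N}{P}$ a subgroup of order divisible by $q$, and one must rule out $q \in \{2,r\}$ (forced by the coprimality) while handling $q \notin \{2,r\}$ by noting that then $\lambda$, having $q$-power order with $q \neq 2,r$, cannot be the source of a bad field of values because the intermediate character is still $\{r\}$-magic-fixed. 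Making this dichotomy airtight, and ensuring the chosen character is genuinely $G$-invariant (not merely $M$-invariant) so the induction closes, is the delicate part; everything else is an application of the Clifford-theoretic and character-triple tools assembled in Sections 2--4.
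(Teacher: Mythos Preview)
Your inductive framework is right, and you correctly locate Theorem~\ref{theorem:existing_over_PSL} for the non-abelian step and Proposition~\ref{proposition:abelian_section} for part of the abelian step, but there is a genuine gap: you never pass to the inertia group $T=\op{I}_G(\phi)$. The paper's first move, after observing that the chief-factor hypothesis descends to every subgroup $X/N\leq G/N$ (this is not automatic and uses the subgroup structure of $\PSL(2,3^a)$ via \cite[II.8.27]{Huppert:Endliche_Gruppen}), is: if $T<G$, apply the induction hypothesis inside $T$ and then use Clifford correspondence to conclude $\chi^G\in\irr(G)$; only after this may one assume $\phi$ is $G$-invariant. Without that reduction your argument breaks in two places. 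First, forming the character triple $(M,N,\phi)$ already requires $\phi$ to be $M$-invariant, which does not follow from mere $P$-invariance. Second, Theorem~\ref{theorem:existing_over_PSL}, which you invoke for the non-abelian step, explicitly assumes $\phi$ is $G$-invariant. Your closing remark that ensuring $G$-invariance ``is the delicate part'' is exactly the point, and the inertia reduction is how it is resolved.

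Your tool placement is also inverted. The character-triple machinery of Section~3 is not used in the abelian step at all; it lives entirely inside the proof of Theorem~\ref{theorem:existing_over_PSL}. Once $\phi$ is $G$-invariant and $M/N$ is an abelian $s$-group, the correct dichotomy (which your final paragraph does eventually reach) is $s\in\{2,r\}$ versus $s\notin\{2,r\}$, not $s=p$ versus $s\neq p$. For $s\notin\{2,r\}$ the character $\phi$ is $s$-rational, and \cite[Theorem~6.30]{Isaacs} gives a unique $s$-rational extension $\theta$ to $M$; uniqueness forces both $G$-invariance and $\mathbb{Q}(\theta)=\mathbb{Q}(\phi)\leq\cycl{r}$. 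For $s\in\{2,r\}$ the coprimality hypothesis gives $\ce{M/N}{P}=1$, so Proposition~\ref{proposition:abelian_section} produces a unique $P$-invariant $\theta\in\irr(M\mid\phi)$, and uniqueness together with the commutation of the $P$-action with field automorphisms yields $\mathbb{Q}(\theta)=\mathbb{Q}(\phi)$. Your separate case $s=p$ is subsumed in $s\notin\{2,r\}$ and needs no special treatment; the ``moreover'' clauses are then bookkeeping on $\theta(1)/\phi(1)$ and $o(\theta)$ through this same split.
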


\begin{proof}
We prove the theorem by induction over $\abs{G:N}$. First, we need to observe that, if $X/N$ is a subgroup of $G/N$, then any chief factor of $X/N$ must be a section of a chief factor of $G/N$. However, it follows from \cite[II.8.27~Hauptsats]{Huppert:Endliche_Gruppen} that any subgroup of $\op{PSL}(2,3^a)$, with $a$ odd, is either solvable or it is isomorphic to the group $\op{PSL}(2,3^b)$, for some $b \mid a$. Thus, the hypothesis we made on chief factors of $G/N$ holds also for the chief factors of any subgroup $X/N \leq G/N$.

Now, let $T = \op{I}_G(\phi)$ and suppose that $T < G$. Since $\phi$ is $P$-invariant, then $P \leq T$ and, by induction, we have that there exists $P \leq U \leq T$ and $\chi \in \irr_{p',\cycl{r}}(U \! \mid \! \phi)$ as required. Finally, by induction $\chi^T \in \irr(T \! \mid \! \phi)$ and, thus, $\chi^G$ is irreducible.

Therefore, we can assume that $\phi$ is $G$-invariant. Let $N < M \lhd G$ such that $M/N$ is a chief factor for $G/N$ and notice that, by Frattini argument, $\no{G/M}{P} = \no{G/N}{P}M/M$ and, therefore, the conditions on the normalizer of $P$ are preserved by the quotient.

First suppose that $M/N$ is an abelian $s$-group, for some prime number $s$. If $s \notin \{r,2\}$, then the character $\phi$ is $s$-rational and it follows from \cite[Theorem~6.30]{Isaacs} that there exists a unique $s$-rational character $\theta \in \irr(M)$ extending $\phi$, which must be $G$-invariant and with values in $\mathbb{Q}(\phi) \leq \cycl{r}$. Suppose now that $s \in \{ 2, r \}$, then we have that
$$ \ce{M/N}{P} = \no{M/N}{P} = 1, $$
since nor $2$ neither $r$ divides $\abs{\no{G/N}{P}}$, and it follows from Proposition~\ref{proposition:abelian_section} that there exists a unique $P$-invariant character $\theta \in \irr(M)$ lying over $\phi$. Since the action of $P$ on $\irr(M)$ commutes with the action of field automorphisms, we have that $\mathbb{Q}(\theta) = \mathbb{Q}(\phi) \leq \cycl{r}$.

Suppose that $2 \nmid \phi(1)o(\phi)$. If $s=2$, we have that $\theta$ is the unique extension of $\phi$ such that $2 \nmid \theta(1)o(\theta)$. If $s \neq 2$, then $\frac{\theta(1)}{\phi(1)}$ is a power of $s$ and $o(\theta) \mid o(\phi) \abs{M/N}$, and it follows that $2 \nmid \theta(1)o(\theta)$ also in this case.

Suppose now that $\mathbb{Q}(\phi) = \mathbb{Q}$ and $\phi(1)$ is a power of $2$. We have already proved that $\mathbb{Q}(\theta) = \mathbb{Q}(\phi) = \mathbb{Q}$. If $s = 2$, then $\frac{\theta(1)}{\phi(1)}$ is a power of $2$; else, by \cite[Theorem~6.30]{Isaacs}, $\theta$ is the only rational-valued irreducible constituent of $\phi^M$ and it extends $\phi$.

If $G/N$ is solvable, then so is $G/M$ and, by induction, there exists $P \leq U \leq G$ and $\chi \in \irr_{p',\cycl{r}}(U)$ with the required properties, with $\chi$ lying over $\theta$ and, thus, over $\phi$.

Suppose now that $G/N$ is not solvable and that $M/N$ is non-abelian. Then, $p=3$ and $M/N$ is isomorphic to the direct product of copies of $\op{PSL}(2,3^a)$, for some odd integer $a > 1$, and it follows from Theorem~\ref{theorem:existing_over_PSL} that there exists a $G$-invariant character $\theta \in \irr_{p'}(M)$ having values in $\cycl{r}$. By induction, again, there exists a group $P \leq U \leq G$ and a character $\chi \in \irr_{p'}(U)$ with values in $\cycl{r}$, lying over $\theta$ and, thus, over $\phi$, and such that $\chi^G \in \irr(G)$. 
\end{proof}

We are approaching our last proof; first, however, few more preliminary results are needed, to prove Theorem~\ref{result:solvable}.

\begin{lemma}
\label{lemma:sigma-invariant}
Let $P$ be a $p$-group, for some odd prime number $p$, and let $\op{id} \neq \sigma \in \op{Aut}(\cycl{p} \mid \mathbb{Q})$. Then, the principal character is the only $\sigma$-invariant irreducible character of $P$.
\end{lemma}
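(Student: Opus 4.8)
We must show: if $P$ is a $p$-group with $p$ an odd prime, and $\sigma$ is a nontrivial element of $\gal(\cycl{p}\mid\mathbb{Q})$, then the only $\sigma$-invariant irreducible character of $P$ is the trivial one. The plan is to argue by contradiction: suppose $\chi\in\irr(P)$ is $\sigma$-invariant and $\chi\neq 1_P$, and derive a contradiction using the arithmetic of the cyclotomic Galois group $\gal(\cycl{p}\mid\mathbb{Q})\cong(\mathbb{Z}/p\mathbb{Z})^{\times}$, which is cyclic of order $p-1$. The key observation is that the values of $\chi$ lie in $\cycl{|P|}$, but since $P$ is a $p$-group, in fact $\mathbb{Q}(\chi)\subseteq\cycl{p^m}$ for $m$ with $|P|\mid p^m$; and the field automorphism $\sigma$ of $\cycl{p}$ has an extension $\hat\sigma$ to $\cycl{p^m}$, so it makes sense to talk about $\chi^{\hat\sigma}$.

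**Key steps.** First I would fix $\hat\sigma\in\gal(\cycl{|P|}\mid\mathbb{Q})$ extending $\sigma$; concretely, choosing a primitive $(p-1)$-th root of $1$ in $(\mathbb{Z}/|P|\mathbb{Z})^{\times}$ compatible with the one defining $\sigma$ on $\cycl{p}$, so that $\hat\sigma$ acts on $\cycl{|P|}$ with order divisible by the order of $\sigma$; in any case $\hat\sigma$ is a $p'$-element of the Galois group (its order divides $p-1$). Next, recall that the number of $\hat\sigma$-invariant irreducible characters of $P$ equals the number of $\hat\sigma$-invariant conjugacy classes of $P$ (Brauer's permutation lemma, which is the statement that the action of a Galois automorphism on $\irr(P)$ and on the conjugacy classes of $P$ has the same number of fixed points — this is in Isaacs, Chapter 6 or Chapter 15). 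So it suffices to show that the only $\hat\sigma$-invariant conjugacy class of $P$ is $\{1\}$. But a conjugacy class is $\hat\sigma$-invariant precisely when $g$ and $g^t$ are conjugate in $P$, where $t$ is the integer (mod $|P|$) by which $\hat\sigma$ acts on $p$-power roots of unity; equivalently $g^{t-1}$ lies in a specific form. Since $\sigma\neq\op{id}$, the action of $\hat\sigma$ on $\cycl{p}$ is nontrivial, so $t\not\equiv 1\pmod p$. The order of the automorphism $x\mapsto x^t$ on $\mathbb{Z}/p^m\mathbb{Z}$ (the part we care about for a $p$-group of exponent $p^k$) is a divisor of $p-1$ that is $>1$.

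**The main point and where the obstacle lies.** For a nontrivial $g\in P$, let $p^k$ be the order of $g$, $k\geq 1$. Then $g\sim g^t$ in $P$ would force $g^t=g^x$ for some $x\in P$, hence $t\equiv$ (the relevant power) in a way that at minimum requires $t\equiv 1\pmod p$ when we project to $\groupgen{g}/\groupgen{g^p}\cong\mathbb{Z}/p\mathbb{Z}$ — but that contradicts $t\not\equiv 1\pmod p$. More carefully: conjugation by $x$ induces an automorphism of $\groupgen{g}$ fixing $g^p$ modulo nothing forced, so I would instead use that any automorphism of the cyclic group $\groupgen{g}$ of order $p^k$ induced by conjugation in a $p$-group $P$ must be a $p$-element of $\aut(\groupgen{g})$, hence acts trivially on $\groupgen{g}/\fra{\groupgen{g}}$, i.e. sends $g$ to $g^{1+p s}$ for some $s$; so $g^t=g^{1+ps}$, giving $t\equiv 1\pmod p$, the desired contradiction. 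The main obstacle is getting this last arithmetic clean: one needs that the automorphism of $\groupgen g$ given by conjugation lies in the Sylow $p$-subgroup of $\aut(\groupgen g)$ (true because it extends to an automorphism realized inside the $p$-group $P$, or directly because $N_P(\groupgen g)/C_P(\groupgen g)$ is a $p$-group), and then that a $p$-automorphism of a cyclic $p$-group raises a generator to a power congruent to $1$ mod $p$. Once that is in place, no nontrivial class is $\hat\sigma$-invariant, so no nontrivial irreducible character is, and a fortiori none is $\sigma$-invariant. I would present this as the argument; the alternative, purely character-theoretic route (sum $\sum_{g}\chi(g)\overline{\chi(g)}$ style, or $\chi(1)^2 = \sum$ over a $\sigma$-orbit decomposition) also works but is messier, so I expect the conjugacy-class/Brauer-lemma approach to be the cleanest.
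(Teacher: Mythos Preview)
Your argument is correct, but the paper takes a different and considerably shorter route. The paper argues by induction on $|P|$: pick a central subgroup $N\leq\ze{P}$ with $|N|=p$; then $\chi_N=\chi(1)\lambda$ for some linear $\lambda\in\irr(N)$, and $\sigma$-invariance of $\chi$ forces $\sigma$-invariance of $\lambda$. But a nontrivial linear character of a group of order $p$ has field of values exactly $\cycl{p}$, so it cannot be fixed by the nontrivial automorphism $\sigma$; hence $\lambda=1_N$, $\chi$ descends to $P/N$, and induction finishes.

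Your approach via Brauer's permutation lemma is a genuine alternative: you transfer the problem to conjugacy classes and use that $N_P(\groupgen{g})/C_P(\groupgen{g})$ is a $p$-group, so conjugation can only realise automorphisms of $\groupgen{g}$ lying in the Sylow $p$-subgroup of $\aut(\groupgen{g})$, i.e.\ sending $g\mapsto g^{1+ps}$, which is incompatible with $t\not\equiv 1\pmod p$. This is perfectly valid and in fact dual to the paper's argument (the paper restricts to the centre on the character side, you look at a single cyclic subgroup on the class side). The trade-off is that the paper's proof is three lines and uses nothing beyond Clifford theory for a central subgroup, whereas yours invokes Brauer's lemma and a structural fact about automorphisms of cyclic $p$-groups. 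Both arguments extend without change to any lift $\hat\sigma$ of $\sigma$ to $\cycl{|P|}$, so there is no difference in strength; the paper's is simply more economical.
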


\begin{proof}
Let $\chi \in \irr(P)$ be a $\sigma$-invariant character of $P$ and let $N \leq \ze{P}$ such that $\abs{N}=p$, then $\chi_N = \chi(1) \lambda$ for some linear character $\lambda \in \irr(N)$. Since $\chi$ is $\sigma$-invariant, so is $\lambda$ and it follows that $\mathbb{Q}(\lambda)=\mathbb{Q}$ and, thus, $\lambda = 1_N$. As a consequence, $\chi \in \irr(P/N)$ and, by induction, $\chi = 1_G$.
\end{proof}

\begin{proposition}
\label{proposition:over_1P}
Let $G$ be a finite group, let $p$ be an odd prime number, let $P$ be a Sylow $p$-subgroup for $G$ and let $\chi \in \irr(G)$ be a $p$-rational character. Suppose that there exists some prime number $s$ such that $s \mid p-1$ and $s \nmid \chi(1)$. Then, $ s \nmid [\chi_P,1_P]$ and, in particular, $[\chi_P,1_P] \neq 0$.
\end{proposition}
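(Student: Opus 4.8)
The plan is to use a counting argument based on a $\sigma$-invariance trick. Pick a prime $s \mid p-1$ with $s \nmid \chi(1)$, and let $\sigma \in \gal(\cycl{p} \mid \mathbb{Q})$ be an element of order $s$; such $\sigma$ exists because $\gal(\cycl{p}\mid\mathbb{Q})$ is cyclic of order $p-1$. Since $\chi$ is $p$-rational, $\mathbb{Q}(\chi) \leq \cycl{p}$ (more precisely $\mathbb{Q}(\chi)$ lies in the maximal real subfield or some subfield of $\cycl{m}$ with $m$ a $p'$-number times... — in any case $\sigma$ can be taken to fix $\mathbb{Q}(\chi)$, so $\chi^\sigma = \chi$). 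The key observation is that $\sigma$ permutes $\irr(P)$, and by Lemma~\ref{lemma:sigma-invariant} the only $\sigma$-invariant irreducible character of $P$ is $1_P$; hence every non-principal constituent of $\chi_P$ lies in a $\sigma$-orbit of size divisible by $s$ (in fact of size exactly a power of $s$ dividing $o(\sigma)=s$, so of size $s$).

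Next I would exploit that the multiplicities are constant along $\sigma$-orbits: since $\chi^\sigma = \chi$, we have $[\chi_P, \mu^\sigma] = [\chi_P^\sigma, \mu^\sigma] = [\chi_P, \mu]$ for every $\mu \in \irr(P)$. Therefore, writing $\chi_P = \sum_{\mu \in \irr(P)} a_\mu \mu$, the contribution to $\chi(1) = \sum_\mu a_\mu \mu(1)$ coming from non-principal $\mu$ splits into $\sigma$-orbits each of size $s$, and within each such orbit the integers $a_\mu \mu(1)$ are all equal (since $\mu^\sigma(1) = \mu(1)$ and $a_{\mu^\sigma} = a_\mu$). Consequently $\chi(1) - [\chi_P, 1_P]\cdot 1 = \chi(1) - [\chi_P,1_P]$ is a sum of terms each divisible by $s$, so $\chi(1) \equiv [\chi_P, 1_P] \pmod{s}$. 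Since $s \nmid \chi(1)$, we conclude $s \nmid [\chi_P, 1_P]$, and in particular $[\chi_P,1_P] \neq 0$.

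The main point requiring care is justifying that $\sigma$ fixes $\chi$: one must check that $p$-rationality of $\chi$ (values in $\cycl{p^a}$ for a suitable $a$, or rather in the unique subfield fixed structure) together with $s \mid p-1$ lets us choose the order-$s$ element $\sigma$ of $\gal(\cycl{|G|}\mid\mathbb{Q})$ to restrict to the identity on $\mathbb{Q}(\chi)$ — this works because the $p$-part of the Galois group is cyclic and an order-$s$ subgroup with $s \mid p-1$, $s \neq p$, acts on $\cycl{p}$ nontrivially while one can arrange it trivially on the prime-to-$p$ part and on $\mathbb{Q}(\chi)$ when $\chi$ is $p$-rational. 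The other mild subtlety is confirming the orbit sizes of non-principal $\mu \in \irr(P)$ under $\sigma$ are genuinely divisible by $s$: this is exactly Lemma~\ref{lemma:sigma-invariant}, which says no non-principal $\mu$ is $\sigma$-fixed (for $\sigma$ of prime order $s$ this forces each such orbit to have size $s$), so the congruence argument goes through cleanly.
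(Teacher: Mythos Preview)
Your argument is essentially the paper's own proof: choose $\sigma$ of order $s$ in $\gal(\cycl{|G|}\mid\mathbb{Q})$ acting nontrivially on $\cycl{p}$ and trivially on the $p'$-part, use $\chi^{\sigma}=\chi$ to see that $\sigma$ permutes the irreducible constituents of $\chi_P$ with constant multiplicities along orbits, and combine the congruence $\chi(1)\equiv[\chi_P,1_P]\pmod s$ with Lemma~\ref{lemma:sigma-invariant}. One terminological slip to clean up: $p$-rational means $\mathbb{Q}(\chi)\subseteq\cycl{m}$ with $p\nmid m$ (as the paper recalls immediately after the statement), not $\mathbb{Q}(\chi)\subseteq\cycl{p}$ or $\cycl{p^a}$; your final paragraph in fact uses the correct meaning, so once you fix the wording the proof is fine.
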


We recall that a character is $p$-rational if it has values in some cyclotomic extension $\cycl{m}$ with $p \nmid m$.

\begin{proof}
Since $s \mid p - 1$, there exists some field automorphism $\sigma \in \op{Aut}(\cycl{p} \!\mid\! \mathbb{Q})$ of order $s$. Since $\chi$ has values in some cyclotomic field $\cycl{m}$ such that $\cycl{p} \cap \cycl{m} = \mathbb{Q}$, we have that $\chi$ is fixed by $\sigma$ and, thus, $\sigma$ permutes the irreducible constituents of $\chi_P$. Since $o(\sigma) = s$ and $s \nmid \chi(1)$, we have that there exists an irreducible constituent $\lambda$ of $\chi_P$ which is fixed by $\sigma$ and such that $s \nmid [\chi_P,\lambda]$. Finally, it follows from Lemma~\ref{lemma:sigma-invariant} that $\lambda = 1_P$ and we are done. 
\end{proof}

The following result proves Theorem~\ref{result:general} and the remaining direction of Theorem~\ref{result:solvable}.

\begin{theorem}
Let $G$ be a finite group and let $p$ and $r$ be two prime numbers. Let $P \in \syl{p}{G}$; if either $2$ or $r$ divide $\abs{\no{G}{P}}$, there exists a non-principal character $\chi \in \irr_{p'}(G)$ having values in $\cycl{r}$. Moreover, if $p \neq r$, $p$ is an odd prime which is not a Fermat number and $G$ is $p$-solvable, then $\chi$ can be chosen such that $[\chi_P,1_P] \neq 0$.
\end{theorem}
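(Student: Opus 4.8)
The plan is to prove the two assertions of the theorem separately, drawing on the machinery already developed. For the first assertion (which is Theorem~\ref{result:general}), I would reduce to the two cases $r = p$ and $p = 2$ already handled by \cite{Navarro-Tiep:Character_cyclotomic_field} and \cite{Navarro-Tiep:Rational_irreducible_characters}, so that I may assume $p$ is odd and $p \neq r$. I would then induct on $|G|$ and pass to a chief factor $M/N$ of $G$; the Frattini argument (as used in the proof of Theorem~\ref{theorem:existing_over}) shows the hypothesis ``$2$ or $r$ divides $|\no{G}{P}|$'' is inherited by quotients, so the inductive hypothesis supplies a non-principal $P$-invariant $\phi \in \irr_{p',\cycl{r}}(N')$ for an appropriate smaller-indexed normal subgroup. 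The point of the induction is to get down to the situation where $N = 1$ and we must build the character from scratch: then either $|\no{G}{P}|$ is even, in which case the Navarro--Tiep results give a non-principal rational character of $p'$-degree directly; or $r$ divides $|\no{G}{P}|$ and I invoke the analysis of almost simple groups --- specifically Corollary~\ref{corollary:simple_groups}, which handles the delicate $\op{PSL}(2,3^a)$ case via Proposition~\ref{proposition:simple_group_PSL}. Stitching the chief-factor-level characters back up to $G$ is exactly what Theorem~\ref{theorem:existing_over} (combined with Theorem~\ref{theorem:existing_over_PSL} for non-abelian factors) is designed to do, producing $\chi \in \irr_{p',\cycl{r}}(G)$.

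For the second assertion I assume additionally $G$ is $p$-solvable, $p$ odd and not a Fermat prime, and $r \neq p$, and I must arrange $[\chi_P, 1_P] \neq 0$. The idea is to run the induction through Theorem~\ref{theorem:existing_over} keeping track of $p$-rationality: since $r \neq p$, the character $\phi$ and all the extensions $\theta$ produced along the chief series have values in $\cycl{r}$, which is a $p$-rational field, so the character $\chi \in \irr_{p',\cycl{r}}(U \mid \phi)$ with $\chi^G \in \irr(G)$ built by that theorem is $p$-rational. Because $p$ is odd and not a Fermat prime, $p - 1$ has an odd prime divisor $s$; since $s \neq 2$ and $s$ is odd, $s \neq p$, so $s \mid p - 1$ gives a field automorphism of $\cycl{p}$ of order $s$. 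I would then want the degree $\chi(1)$ (equivalently the induced degree, or rather the degree upstairs) to be coprime to $s$, and apply Proposition~\ref{proposition:over_1P} to conclude $s \nmid [\chi_P, 1_P]$, hence $[\chi_P, 1_P] \neq 0$. To make $s \nmid \chi(1)$ hold I should not merely take any $\chi$ but choose it of minimal degree, or more precisely I should strengthen the induction in Theorem~\ref{theorem:existing_over} to track that the degree is an $s'$-number --- here the ``$2 \nmid \phi(1)o(\phi)$'' bookkeeping in that theorem is the template, and I would run the analogous argument with the prime $s$ in place of $2$, using that each chief factor $M/N$ is either an $s'$-group (in which case the extension degree $\theta(1)/\phi(1)$ is an $s$-power only when $s$ is the relevant prime) or, when $M/N$ is an $s$-group, one appeals to the uniqueness of an $s$-rational extension.

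The main obstacle I anticipate is precisely this last point: controlling the prime $s \mid p-1$ through the chief series of a $p$-solvable group. When a chief factor $M/N$ is an elementary abelian $s$-group (with $s$ the chosen odd prime dividing $p-1$, and note $s$ could coincide with $r$ or not), the degree can jump by a power of $s$, and one must argue that there is nonetheless a constituent of the induced-up character whose degree stays coprime to $s$; this is where $s$-rationality of $\phi$ and \cite[Theorem~6.30]{Isaacs} should be used, much as $2$-rationality was used in Theorem~\ref{theorem:existing_over}. A secondary subtlety is the interaction of the three primes $p$, $r$, $s$ --- one must check $s \neq p$ (immediate, $s \mid p-1$) and handle gracefully the case $s = r$, where the ``small cyclotomic field'' constraint and the ``$s'$-degree'' constraint interact, but since $\cycl{r}$ is already $p$-rational this should cause no real trouble. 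Finally, one must confirm that the hypothesis $(\,|\no{G}{P}|,\,2r)=1$ does \emph{not} hold here (we are in the case $2$ or $r$ divides $|\no{G}{P}|$), so that Theorem~\ref{theorem:existing_over} --- whose hypothesis is the opposite --- is not literally the tool for the upward construction; rather one re-runs its inductive skeleton in the present setting, which is routine but must be spelled out.
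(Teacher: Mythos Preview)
Your architecture for the first assertion has a genuine gap. You claim that the Frattini argument shows the hypothesis ``$2$ or $r$ divides $|\no{G}{P}|$'' is inherited by quotients, but this is false: the Frattini identity $\no{G/N}{PN/N}=\no{G}{P}N/N$ shows that the \emph{opposite} hypothesis $(|\no{G}{P}|,2r)=1$ passes to quotients, not the one you want. Indeed, if all the $\{2,r\}$-part of $\no{G}{P}$ already lies inside $N$, the quotient normalizer can have order coprime to $2r$. The paper exploits exactly this failure: one chooses $N\lhd G$ \emph{maximal} such that some chief factor $N/K$ either has $2$ or $r$ dividing $|\no{N/K}{P}|$ or is a non-abelian direct product not of $\PSL(2,3^a)$-type (when $p=3$). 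Since every chief factor above $N$ then fails both conditions, the product formula $|\no{G/N}{P}|=\prod_i |\no{G_{i+1}/G_i}{P}|$ over a chief series above $N$ forces $(|\no{G/N}{P}|,2r)=1$, so Theorem~\ref{theorem:existing_over} \emph{is} literally the tool above $N$ --- contrary to your final paragraph. One then replaces $G$ by $G/K$, builds a $P$-invariant $\phi\in\irr_{p',\cycl{r}}(N)$ on the now-minimal normal $N$ (via Glauberman when $N$ is an abelian $\{2,r\}$-group, or via Corollary~\ref{corollary:simple_groups} spread across the simple factors when $N$ is non-abelian), and feeds it into Theorem~\ref{theorem:existing_over}. ``Re-running the inductive skeleton'' of that theorem in the setting where $2$ or $r$ does divide $|\no{G}{P}|$ would not be routine: the crucial step there, for chief factors that are $2$- or $r$-groups, uses $\ce{M/N}{P}=1$, which is precisely what the coprimeness hypothesis provides.

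For the second assertion your endgame via Proposition~\ref{proposition:over_1P} is right, but fixing a single odd prime $s\mid p-1$ in advance does not quite work. In the $p$-solvable case the minimal normal $N$ is a $p'$-group, and when $N$ is non-abelian the character $\phi$ comes from Corollary~\ref{corollary:simple_coprime}, which only guarantees \emph{odd} degree; your chosen odd $s$ may well divide $\phi(1)$, and there is no available strengthening of that corollary producing $s'$-degree for an arbitrary odd $s$. The paper instead uses the two ``moreover'' clauses of Theorem~\ref{theorem:existing_over} as stated: one tracks the dichotomy ``$2\nmid\phi(1)o(\phi)$'' versus ``$\phi$ rational with $\phi(1)$ a $2$-power'' (the former arising when $N$ is non-abelian or an $r$-group with $r$ odd, the latter when $N$ is a $2$-group), obtains $\theta\in\irr(U)$ with $\theta(1)$ either odd or a $2$-power, and \emph{then} chooses $s$: take $s=2$ in the first case and an odd prime divisor of $p-1$ (which exists since $p$ is not Fermat) in the second. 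Either way $s\mid p-1$ and $s\nmid\theta(1)$, so Proposition~\ref{proposition:over_1P} applies to $\theta$ in $U\geq P$, and since $\theta_P$ is a summand of $\chi_P=(\theta^G)_P$ one gets $[\chi_P,1_P]\neq 0$.
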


\begin{proof}
If either $p=r$ or $p=2$, then the existence of $\chi$ is guaranteed by \cite[Theorem~A]{Navarro-Tiep:Character_cyclotomic_field}. Thus, in proving this theorem we can assume that $p$ and $r$ are distinct and $p>2$.

Let $N \lhd G$ to be maximal such that, for some $K < N$, $N/K$ is a chief factor for $G$ and either
\begin{itemize}
\item $2$ or $r$ divide $\abs{\no{N/K}{P}}$, or
\item $N/K$ is isomorphic to the direct product of some non-abelian simple group $S$ and, if $p = 3$, $S \ncong \op{PSL}(2,3^a)$ for any odd positive integer $a$.
\end{itemize}
Since, by Frattini argument, $\no{G/N}{P} = \no{G}{P}N/N$, we can see that $(\abs{\no{G/N}{P}},2r)=1$ and, in particular, we have that $N > 1$.

Another consequence of the aforementioned Frattini argument is that we can replace $G$ with $G/K$ and we can assume that $N$ is a minimal normal subgroup of $G$. We claim that there exists some character $\phi \in \irr(N)$ having values in $\cycl{r}$ and such that $p \nmid \phi(1)$. Moreover, if $p \nmid \abs{N}$, we claim that $\phi$ can be chosen such that either $2 \nmid \phi(1)o(\phi)$ or $\phi$ is a linear character of order $2$.

Suppose first that either $2$ or $r$ divide $\abs{\no{N}{P}}$ and $N$ is an elementary abelian $s$-group, for some prime $s \in \{ 2,r \}$. Then, $p \nmid \abs{N}$ and, by Glauberman correspondence, since $\ce{N}{P} = \no{N}{P} > 1$, there exists some character $\phi \in \irr(N)$ which is $P$-invariant. Moreover, since $N$ is elementary abelian, then $\phi$ is linear and with values in $\cycl{s}$ and order $o(\phi)=s$.

Suppose now that $N = S_1 \times ... \times S_k$, with $S_i \cong S$, for each $i = 1,...,k$, for some simple group $S$, and notice that, by how we chose $N$, if $p=3$ and $S \cong \op{PSL}(2,3^a)$ for some odd integer $a > 1$, then there exists an element $x \in \no{N}{P}$ such that $o(x) \in \{ 2,r \}$. If such element exists, then we can write it as
$$ x = x_1 \times ... \times x_k \in S_1 \times ... \times S_k $$
and, since $x \neq 1$, we can assume without loss of generality that $x_1 \neq 1$. If we call $\tilde{S} = S_2 \times ... \times S_k$, we can write $x = x_1 \tilde{x}$, with $\tilde{x} \in \tilde{S}$.

Now, let $Q = \no{P}{S_1}$ and notice that $Q$ also normalizes $\tilde{S}$. Since $Q^x = Q$, because $x$ normalizes both $P$  and $S_1$, we have that $Q^{x_1} = Q^{\tilde{x}^{-1}}$. However, $Q^{\tilde{x}^{-1}} \leq Q \tilde{S}$ and it follows that $Q^{x_1} \leq Q S_1 \cap Q \tilde{S} = Q$ and, thus, $x_1$ normalizes $Q$. Notice that, in this case, it follows from Proposition~\ref{proposition:simple_group_PSL} that $o(x_1)=r$.

Therefore, we have a $p$-group $Q = \no{P}{S_1}$ acting on a simple group $S_1$ and, if $p=3$ and $S_1 \cong \op{PSL}(2,3^a)$ for some odd integer $a > 1$, then $\no{S_1}{Q}$ contains an element of order $r$. It follows from Corollary~\ref{corollary:simple_groups} that there exists a non-principal $Q$-invariant character $\xi_1 \in \irr_{p'}(S_1)$ having values in $\cycl{r}$. Moreover, if $p \nmid \abs{N}$, then also $p \nmid \abs{S_1}$ and $\xi_1$ can be chosen such that $\xi_1(1)$ is odd. Since $o(\xi_1)=1$, because $S_1$ is perfect, in this case we have that $2 \nmid \xi_1(1) o(\xi_1)$.

Now, let $X$ be a set of coset representatives for $Q$ in $P$, with $1 \in X$, and define the character $\phi = \xi_1 \times ... \times \xi_k \in \irr(N)$, with $\xi_i \in \irr(S_i)$ for each $i = 1,...,k$, such that
\begin{itemize}
\item $\xi_i = {\xi_1}^g$ for some $g \in X$ such that ${S_1}^g = S_i$, if such element $g$ exists;
\item $\xi_i = 1_{S_i}$ otherwise.
\end{itemize}
We have that $\phi$ is $P$-invariant, it has values in $\cycl{r}$, $o(\phi)=1$, $p \nmid \phi(1)$ and, if $2 \nmid \xi_1(1)$, then also $2 \nmid \phi(1)$. Thus, we have proved the claim we made in the second paragraph.

Consider now the group $G/N$ and notice that, from how we have chosen $N$, we have that each chief factor of $G/N$ is either abelian or, if $p=3$, it is congruent to the direct product of some copies of $\op{PSL}(2,3^a)$ for some odd integer $a > 1$. It follows from Theorem~\ref{theorem:existing_over} that there exist a group $P \leq U \leq G$ and a character $\theta \in \irr_{p',\cycl{r}}(U \! \mid \! \phi)$ such that $\theta^G \in \irr(G)$. Moreover, if $G$ is $p$-solvable, then $p \nmid \abs{N}$ and $T/N$ is solvable and, therefore, we can take $\theta$ such that either $\theta(1)$ is odd or it is a power of $2$. Thus, if $p$ is odd and it is not a Fermat number, we can always find a prime number $s$ which divides $p - 1$ and not $\theta(1)$, and it follows from Proposition~\ref{proposition:over_1P} that, if $G$ is $p$-solvable, then $[\theta_P,1_P] > 0$.

Finally, let $\theta^G = \chi \in \irr(G)$, then $p \nmid \chi(1)$, because $P < T$, $\chi$ has values in $\cycl{r}$, because so has $\theta$, and $\chi$ lies over $1_P$ whenever $\theta$ does. This proves the theorem. 
\end{proof}


\begin{thebibliography}{99}

\bibitem{Carter:Finite_groups_of_Lie_type}
	R.W. Carter.
	\emph{Finite groups of Lie type. Conjugacy classes and complex characters}.
	Pure and Applied Mathematics. A Wiley-Interscience Publication.
	New York: John Wiley \& Sons, Inc., 1985.

\bibitem{Curtis:The_Steinberg_character}
	C.W. Curtis.
	``The Steinberg character of a finite group with a $( B , N )$-pair''.
	\emph{J. Algebra} 4 (1966), 433--441.
	
\bibitem{Digne-Michel:Representations_finite_groups}
	F. Digne and J. Michel.
	\emph{Representations of finite groups of Lie type}.
	Vol. 21. London Mathematical Society Student Texts.
	New York: Cambridge University Press, 1991.
	
\bibitem{Dornhoff:Group_Representation_TheoryA}
	L. Dornhoff.
	\emph{Group Representation Theory Part A}.
	Vol. 7. Pure and Applied Mathematics.
	New York: Marcel Dekker, Inc., 1971.
	
\bibitem{Giannelli-Hung-ShaefferFry-Vallejo:Characters_of_pi_degree_and_small}
	E. Giannelli, N.N. Hung, M. Schaeffer Fry and C. Vallejo.
	``Characters of $\pi'$-degree and small cyclotomic fields''.
	\emph{Ann. Mat. Pura Appl.} 200 (2021), 1055--1073.
	
\bibitem{Guralnick-Navarro-Tiep:Finite_groups_odd_normalizer}
	R. Guralnick, G. Navarro and P.H. Tiep.
	``Finite groups with odd Sylow normalizers''.
	\emph{Proc. Amer. Math. Soc.} 144.12 (2016), 5129–-5139.

\bibitem{Huppert:Endliche_Gruppen}
	B. Huppert.
	\emph{Endliche gruppen I}.
	Berlin: Springer-Verlag, Inc., 1967.

\bibitem{Isaacs}
	I.M. Isaacs.
	\emph{Character theory of finite groups}.
	New York: Dover Publications, Inc., 1994.

\bibitem{Isaacs:Characters_groups_odd_order}
	I.M. Isaacs.
	``Characters and Hall subgroups of groups of odd order''.
	\emph{J. Algebra} 157 (1993), 548--561.

\bibitem{Isaacs-Malle-Navarro:Real_characters}
	I.M. Isaacs, G. Malle and G. Navarro.
	``Real characters of $p'$-degree''.
	\emph{J. Algebra} 278 (2004), 611--620.

\bibitem{Isaacs-Navarro:Character_table_Sylow_normalizer}
	I.M. Isaacs and G. Navarro.
	``Character tables and Sylow normalizers''.
	\emph{Arch. Math.} 78 (2002), 430--434.

\bibitem{Lusztig:Representations_reductive_groups_disconnected}
	G. Lusztig.
	``On the representations of reductive groups with disconnected centre''.
	\emph{Ast\'erisque} 168.10 (1988), 157--166.

\bibitem{Malle-Navarro:Characterizing_normal_sylow}
	G. Malle and G. Navarro.
	``Characterizing normal Sylow $p$-subgroups by character degrees''.
	\emph{J. Algebra} 370 (2012), 402--406.

\bibitem{Navarro-Tiep:Character_cyclotomic_field}
	G. Navarro and P.H. Tiep.
	``Characters of $p'$-degree with cyclotomic field of values''.
	\emph{Proc. Amer. Math. Soc.} 134.10 (2006), 2833-–2837.

\bibitem{Navarro-Tiep:Rational_irreducible_characters}
	G. Navarro and P.H. Tiep.
	``Rational irreducible characters and rational conjugacy classes in finite groups''.
	\emph{Trans. Amer. Math. Soc.} 360 (2008), 2443-–2465.

\bibitem{Navarro-Tiep-Vallejo:McKay_natural_correspondence}
	G. Navarro, P.H. Tiep and C. Vallejo.
	``McKay natural correspondences on characters''.
	\emph{Algebra \& Number Theory} 8 (2014), 1839--1856.

\bibitem{Simpson-Frame:The_character_table_for}
	W.A. Simpson and J.S. Frame.
	``The character table for $\SL(3,q)$, $\SU(3,q^2)$, $\PSL(3,q)$, $\PSU(3,q^2)$''.
	\emph{Canad. J. Math.} 25.3 (1973), 486--494.

\bibitem{Turull:Clifford_theory}
	A. Turull.
	``Clifford Theory with Schur Indexes''.
	\emph{J. Algebra} 170.2 (1994), 661--677.

\bibitem{Turull:Odd_character_correspondences}
	A. Turull.
	``Odd character correspondences in solvable groups''.
	\emph{J. Algebra} 319.2 (2008), 739--758.

\end{thebibliography}
\end{document}